\numberwithin{equation}{section}
\theoremstyle{plain}
\newtheorem{lem}{Lemma}[section]
\newtheorem{thm}[lem]{Theorem}
\newtheorem{cor}[lem]{Corollary}
\newtheorem{property}{Property}
\theoremstyle{definition}
\newtheorem{defn}{Definition}[section]
\newtheorem{assum}{Assumption}[section]
\theoremstyle{remark}
\newtheorem{rem}{Remark}[section]
\renewcommand{\thefigure}{\thesection.\arabic{figure}}
\begin{document}
\renewcommand{\figurename}{Figure}
\renewcommand{\thesubfigure}{(\alph{subfigure})}
\renewcommand{\thesubtable}{(\alph{subtable})}
\makeatletter
\renewcommand{\p@subfigure}{\thefigure~}

\makeatother
\title{\large\bf A general fractional total variation-Gaussian (GFTG) prior for Bayesian inverse problems}
\author{
Li-Li Wang\thanks
{School of Mathematics, Hunan University, Changsha 410082, China.
Email: lilywang@hnu.edu.cn}
\and
Ming-Hui Ding\thanks
{School of Mathematics, Hunan University, Changsha 410082, China.
Email: minghuiding@hnu.edu.cn}
\and
Guang-Hui Zheng\thanks
{School of Mathematics, Hunan University, Changsha 410082, China. Email: zhenggh2012@hnu.edu.cn (Corresponding author)}
}
\date{}
\maketitle

\begin{center}{\bf ABSTRACT}
\end{center}\smallskip
In this paper, we investigate the imaging inverse problem by employing an infinite-dimensional Bayesian inference method with a general fractional total variation-Gaussian (GFTG) prior. This novel hybrid prior is a development for the total variation-Gaussian (TG) prior and the non-local total variation-Gaussian (NLTG) prior, which is a combination of the Gaussian prior and a general fractional total variation regularization term, which contains a wide class of fractional derivative. Compared to the TG prior, the GFTG prior can effectively reduce the staircase effect, enhance the texture details of the images and also provide a complete theoretical analysis in the infinite-dimensional limit similarly to TG prior. The separability of the state space in Bayesian inference is essential for developments of probability and integration theory in infinite-dimensional setting, thus we first introduce the corresponding general fractional Sobolev space and prove that the space is a separable Banach space. Thereafter, we give the well-posedness and finite-dimensional approximation of the posterior measure of the Bayesian inverse problem based on the GFTG prior, and then the samples are extracted from the posterior distribution by using the preconditioned Crank-Nicolson (pCN) algorithm. Finally, we give several numerical examples of image reconstruction under liner and nonlinear models to illustrate the advantages of the proposed improved prior.

\smallskip
{\bf keywords}: Image reconstruction; Bayesian inversion; fractional total variation; hybrid prior.

\section{Introduction}\label{sec1}

Fractional differentiation is a mathematical discipline that has developed rapidly in the last decades. It also plays important role in many sciences such as noise detection and estimation, electromagnetic theory, wavelets, and splines \cite{Family1991,unser2000,Mathieu2003,J.Zhang2012,DL.Chen2013,Williams2016,W.Gou2017}. Unlike integer derivatives, fractional derivatives are nonlocal properties of a function and provide important tools for nonlocal field theory.

Recently, the fractional total variation is exploited as a novel regularization term, which is widely used in imaging inverse problems. It is well known that regularization methods are proposed to overcome the ill-posed of the inverse problem. Fractional derivatives are initially recommended for natural image processing to restore repetitive patterns and textures. For example, Efros et al. proposed a heuristic copy-paste technique for texture synthesis in \cite{Efros1999}.  In \cite{Ruiz2009}, a regularization term based on fractional order derivatives is introduced for solving the image registration problem. For ease of calculation, Pu et al. \cite{Pu2010} implemented a class of fractional differential masks and illustrated that fractional differentiation can deal well with fine structures, such as texture information.
In \cite{Zhang2012}, a class of fractional-order multi-scale variational models and an alternating projection algorithm for image denoising  were introduced. These earlier works have suggested and illustrated that fractional order differentiation may be effective regularizers for image denoising and image registration.

 The fractional total variation (FTV) regularization method is actually a nonlocal regularization strategies \cite{Lv2020,gilboa2017,zhang2010}, which use the similarity present in the image as weights for recovery, smoothing or regularization. Fractional differentiation maximizes the preservation of low-frequency contour features in smooth regions and keep high-frequency marginal feature in the areas whose gray level changes greatly, and also enhances texture detail in regions where gray levels do not vary significantly \cite{Zhang2012}. Although the typical total variation (TV) regularization, has been shown to achieve a good compromise between noise removal and edge preservation in image processing \cite{Vogel2002}. However, it tends to produce the so-called blocky (staircase) effects on the images as it favors a piecewise constant solution in bounded variation (BV) space. As a result, fine details such as textures in the original image may not be satisfactorily recovered. In contrast, the fractional total variation (FTV) regularization method is suggested to effectively reduce block effects and capture more detailed information. For example, Zhang and Wei \cite{zhang2011} proposed a fractional order multi-scale variational models for image denoising. In \cite{K.Chen2015}, Zhang and Chen presented a fractional total variation model for image restoration and analyzed the properties of FTV rigorously. A truncated fractional total variation model (TFTV) is proposed by Chan and Liang for image restoration in \cite{chan2019}, and the alternating directional method of multiplier is applied to solve the TFTV model. Yao et al. \cite{yao2020} presented a hybrid single-image super-resolution model integrated with FTV for high-resolution image. For some other related references, one can see \cite{golbaghi2020,Laghrib2018,wang2019,J.Zhang2012}.

In this paper, we study imaging inverse problems under line and nonlinear models based on a general fractioal total variational-Gaussian (GFTG) prior in an infinite dimensional Bayesian framework. The GFTG contains an extended fractional derivative, which is a generalization of a wide class of fractional derivatives, such as Riemann-Liouville fractional derivative, Hadamard fractional derivative, Katugampola fractional derivative and so on \cite{Sousa2018}. Since Bayesian inference methods provide a rigorous framework for quantifying uncertainty in the presence of data, they have become a popular tool for solving inverse problems. However, to our knowledge, the discussion for FTV based on Bayesian theorem is very sparse. In this work, we formulate Bayes' formula on a separable Hilbert space and study its properties in this infinite dimensional setting. This is important because when all computational algorithms work on finite-dimensional approximations, these approximations are usually in very high-dimensional spaces, and many significant challenges arise from this fact.  We adopt in infinite dimensional setting, the formulation of the Bayesian approach on a separable Hilbert space has numerous benefits \cite{Gelman2013,Stuart2010,Stuart2015}: (i) it reveals a framework for the well-posedness of the inverse problem, allowing the study of robustness to changes in the observed data; (ii) it allows the establishment of direct connections using classical regularization theory, which was developed in a separable Hilbert space setting; (iii) and introduces new algorithmic methods that exploit the structure of infinite dimensional problems.

A typical Bayesian treatment consists of assigning a prior distribution to the unknown parameters and then update the distribution based on the observed data, yielding the posterior distribution. The performance of Bayesian inference depends on the choice of prior distribution. Inspired by the total variational-Gaussian (TG) prior, Hadamard fractioal TV-Gaussian (HFTG) prior and nonlocal TV-Gaussian (NLTG) prior and their related references \cite{H.Zou2005,Compton2012,Z.Yao2016,Lv2020,wang2021}, we propose an improved GFTG prior,  which combination of general fractional total variational regularization term and the Gaussian distribution. In particular, it is a huge extension of HFTG in \cite{wang2021}, and contains a wide class of fractional TV-Gaussian prior. Compare with HFTG, on the one hand, we establish a more general Bayesian inference framework for inverse problem base on this GFTG prior. On the one hand, we can according to the smoothness at different regions of image adjust the types and orders of fractional derivatives in the GFTG prior simultaneously, and then recover the detailed information of image more accurately. This hybrid prior not only allows for flexible recovery of texture and geometric patterns for various imaging inverse problems, but also uses the Gaussian distribution as a reference measure to ensure that the resulting prior converge to a well-defined probability measure in the function space in the limit of infinite dimensionality.

In this article, we first give the basic setup of the Bayesian inference method for image reconstruction. In the Bayesian framework, a good prior distribution can significantly improve the inference results, so we consider the extended GFTG prior of TG, which can not only overcome the step effect brought by the TG prior and capture the detailed information of the image but also has good theoretical and computational advantages in the limit of infinite dimensionality. The separability of the space is crucial to the study of probability and integration in the infinite-dimensional setting, we demonstrate the separability of the corresponding fractional Sobolev space of the GFTG prior. Afterwards we discuss the common properties of the posterior distributions arising from the inverse problem of Bayesian methods induced by GFTG prior, i.e., well-posedness and finite-dimensional approximation. Finally, we reconstruct the images using the standard pCN algorithm and give different numerical examples to verify that our proposed method is robust and effective. To provide a global view of our study, the major contributions of this work can be summarised as follows.
\begin{itemize}
\item{We propose to use the GFTG prior of Bayesian inference method for image reconstruction, which contains a wide class of fractional derivatives. This hybrid prior, on the one hand, preserves detailed information  and reduce the block effects about the image, and on the other hand allows to build a theoretical analysis in the infinite-dimensional limit. Moreover, the corresponding fractional Sobolev space of the GFTG prior is constructed and proved to be a separable space, which is essential to establish probabilities and integrals theory in the infinite dimensional Bayesian method.}
\item{We investigate the nature of the posterior distribution of the Bayesian approach based on the GFTG prior. It reveals the well-posedness framework of the Bayesian model and the convergence of numerical approximation to the posterior measure. Furthermore, we verify the discretization-invariant (or dimension-independent) \cite{bui2016,lassas2004} property of the GFTG prior-pCN algorithm.}
\item{Finally, according to the smoothness at different regions of image, we choose different types and orders of fractional derivatives in the GFTG prior to match the corresponding smoothness. It shows the reconstruction results are satisfying, thus verifying the robustness and effectiveness of our proposed method.}
\end{itemize}
The paper is organized in the following: In section 2, we provide preliminary knowledge on  definitions and some properties of the fractional calculus.  We describe the Bayesian framework with hybrid prior, build the GFTG prior and give some common properties of the posterior distribution based on GFTG prior in section 3. The section 4 simply describes the pCN algorithm. Sections 5 and 6 are respectively devoted to numerical experiments and conclusion.

%%%%%%%%%%%%%%%%%%%%%%%%%%%%%%%%%
\section{Preliminaries}
In this section we present the definitions and some properties of the fractional integrals and fractional derivatives. In the following and throughout the text, $\Omega=[a, b]$ is a finite interval and $\alpha>0$ is a real. Also let $\psi \in C^n (\Omega)$ be an increasing function such that $\psi'(x)\neq 0$, for all $x \in \Omega$.%Some of these definitions and results were presented in Samko et at.

\begin{defn}\label{fractional integral}
\cite{Samko1993,Kilbas2006} Let $f$ be an integrable function defined on $\Omega$, the left and right-sided Riemann-Liouville fractional integrals of a function $f$ with respect to another function $\psi$ are respectively defined by

$$I^{\alpha,\psi}_{[a,x]}f(x):=\frac{1}{\Gamma(\alpha)}\int_{a}^{x} \frac{\psi'(t)f(t)dt}{(\psi(x)-\psi(t))^{1-\alpha}},$$
and
$$I^{\alpha,\psi}_{[x,b]}f(x):=\frac{1}{\Gamma(\alpha)}\int_{x}^{b} \frac{\psi'(t)f(t)dt}{(\psi(t)-\psi(x))^{1-\alpha}},$$
where, $\Gamma(x)$ represents Gamma function given by
$$\Gamma(x)=\int_0^{+\infty}z^{x-1}e^{-z}dz.$$
\end{defn}

Here we evoke two definitions of Riemann-Liouville \cite{Samko1993,Kilbas2006,Pdlubny1999} and Caputo \cite{Sousa2018,Pdlubny1999,Almeida2017} fractional derivatives with respect to another function and the Riesz fractional derivatives correspond to them, all definitions being motivated by the classical fractional derivative of Riemann-Liouville, Caputo and the Riesz, in that order, choosing a specific function $\psi$.

\begin{defn}\label{fractional derivative}The left and right-sided Riemann-Liouville fractional derivatives of a function $f\in C^{n}(\Omega)$ with respect to another function $\psi$ are respectively defined by

\begin{align*}
 D^{\alpha,\psi}_{[a,x]}f(x):= & \left(\frac{1}{\psi'(x)}\frac{d}{dx}\right)^n {I_{[a,x]}^{n-\alpha,\psi}}f(x)\\
=&\frac{1}{\Gamma(n-\alpha)}\left(\frac{1}{\psi'(x)}\frac{d}{dx}\right)^n\int_a^x \frac{\psi'(t)f(t)dt}{(\psi(x)-\psi(t))^{\alpha-n+1}},
\end{align*}
and
\begin{align*}
 D^{\alpha,\psi}_{[x,b]}f(x):= & \left(-\frac{1}{\psi'(x)}\frac{d}{dx}\right)^n {I_{[x,b]}^{n-\alpha,\psi}}f(x)\\
=&\frac{1}{\Gamma(n-\alpha)}\left(-\frac{1}{\psi'(x)}\frac{d}{dx}\right)^n\int_x^b \frac{\psi'(t)f(t)dt}{(\psi(t)-\psi(x))^{\alpha-n+1}},
\end{align*}
where
$$n=[\alpha]+1.$$
Then the Riesz-RL fractional derivative is given by
$${^R}D^{\alpha,\psi}_{[a,b]}f(x):=\frac{1}{2}\left(D^{\alpha,\psi}_{[a,x]}f(x)+(-1)^n D^{\alpha,\psi}_{[x,b]}f(x)\right).$$
\end{defn}
Particularly, when $\psi=x$ and $\psi=\ln x$, the definitions can be transformed into the classical Riemann-Liouville fractional derivative and the Hadamard fractional derivative respectively.

\begin{defn} \label{Caputo fractional derivative}(\textbf{Caputo fractional derivative})
The left and right-sided $\psi$-Caputo fractional derivatives of a function $f\in C^{n}(\Omega)$ with respect to another function $\psi$ of order $\alpha$ are respectively defined by
\begin{align*}
{^C}D^{\alpha,\psi}_{[a,x]}f(x):=& {I_{[a,x]}^{n-\alpha,\psi}}\left(\frac{1}{\psi'(x)}\frac{d}{dx}\right)^n f(x)\\
=& \frac{1}{\Gamma(n-\alpha)}\int_a^x\frac{\psi'(t)(\frac{1}{\psi'(t)}\frac{d}{dt})^n f(t)dt}{(\psi(x)-\psi(t))^{\alpha-n+1}}
\end{align*}
and
\begin{align*}
{^C}D^{\alpha,\psi}_{[x,b]}f(x):=& {I_{[x,b]}^{n-\alpha,\psi}}\left(-\frac{1}{\psi'(x)}\frac{d}{dx}\right)^n f(x)\\
=& \frac{1}{\Gamma(n-\alpha)}\int_x^b\frac{\psi'(t)(-\frac{1}{\psi'(t)}\frac{d}{dt})^n f(t)dt}{(\psi(t)-\psi(x))^{\alpha-n+1}}
\end{align*}
where
$$n=[\alpha]+1 \, \mbox{ for } \, \alpha\notin\mathbb N, \quad n=\alpha\, \mbox{ for } \, \alpha\in\mathbb N.$$
Then the Riesz-Caputo fractional derivative is given by
$${^{RC}} D^{\alpha,\psi}_{[a,b]}f(x):=\frac{1}{2}\left({^C} D^{\alpha,\psi}_{[a,x]}f(x)+(-1)^n {^C} D^{\alpha,\psi}_{[x,b]}f(x)\right).$$
\end{defn}
To simplify notation, we will use the abbreviated differential operator form
$$D_{\psi}=\frac{1}{\psi'(x)}\frac{d}{dx},D_{\psi}^n=\underbrace{D_{\psi}\cdot D_{\psi} \cdots D_{\psi} }_{n\  times}$$

We can see that the above two definitions are different from each other, while they have some of the same properties. Also, they are equivalent in some special conditions. In fact, there is a relationship between above two types of fractional derivatives as \cite{Almeida2017,Jarad2020,Sousa2018}.
\begin{thm}\label{relationship}
If $f \in C^n(\Omega)$ and $\alpha>0$, then
\begin{align*}
{^C}D^{\alpha,\psi}_{[a,x]}f(x):&=D^{\alpha,\psi}_{[a,x]}\left[f(x)-\sum^{n-1}_{k=0}\frac{D_{\psi}^k f(a)}{k!}\big(\psi(x)-\psi(a)\big)^k \right]\\
&=D^{\alpha,\psi}_{[a,x]}f(x)-\sum^{n-1}_{k=0}\frac{D_{\psi}^k f(a)}{\Gamma(k-\alpha+1)}\big(\psi(x)-\psi(a)\big)^{k-\alpha},
\end{align*}
and
\begin{align*}
{^C}D^{\alpha,\psi}_{[x,b]}f(x):&=D^{\alpha,\psi}_{[x,b]}\left[f(x)-\sum^{n-1}_{k=0}\frac{(-D_{\psi})^k f(b)}{k!}(\psi(b)-\psi(x))^k \right]\\
&=D^{\alpha,\psi}_{[x,b]}f(x)-\sum^{n-1}_{k=0}\frac{(-D_{\psi})^k f(b)}{\Gamma(k-\alpha+1)}\big(\psi(b)-\psi(x)\big)^{k-\alpha}.
\end{align*}
\end{thm}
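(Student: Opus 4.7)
My plan is to prove the first identity in each half of the theorem and deduce the second as an immediate algebraic consequence via the power rule for $D^{\alpha,\psi}_{[a,x]}$. I will work out the left-sided statement in detail; the right-sided case is entirely analogous, with the sign factor $(-1)^n$ absorbed by the operator $-D_\psi$.

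First I would introduce the $\psi$-Taylor polynomial
$$T_{n-1}(x) := \sum_{k=0}^{n-1} \frac{D_\psi^k f(a)}{k!}\bigl(\psi(x)-\psi(a)\bigr)^k$$
and set $g(x) := f(x) - T_{n-1}(x)$. Using the elementary formula $D_\psi\bigl((\psi(x)-\psi(a))^k\bigr) = k\bigl(\psi(x)-\psi(a)\bigr)^{k-1}$, a short induction shows that $D_\psi^j g(a) = 0$ for $j = 0, 1, \ldots, n-1$, so $T_{n-1}$ is the $\psi$-analogue of the Taylor polynomial of $f$ at $a$. By linearity of $D^{\alpha,\psi}_{[a,x]}$, the first asserted identity is therefore equivalent to the reduced statement
$${^C}D^{\alpha,\psi}_{[a,x]} f(x) = D^{\alpha,\psi}_{[a,x]} g(x).$$

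To prove this reduced identity I would pass to the classical setting via the substitution $s = \psi(t)$, $y = \psi(x)$, together with the pulled-back functions $F(s) := f(\psi^{-1}(s))$ and $G(s) := g(\psi^{-1}(s))$. Since $\psi'(t)\,dt = ds$ and $D_\psi$ becomes ordinary differentiation in $s$, the $\psi$-fractional integral $I^{n-\alpha,\psi}_{[a,x]}$ transforms into the classical left Riemann-Liouville integral on $[\psi(a),y]$, so ${^C}D^{\alpha,\psi}_{[a,x]}g(x)$ and $D^{\alpha,\psi}_{[a,x]}g(x)$ coincide with the classical Caputo and Riemann-Liouville fractional derivatives of $G$ evaluated at $y$. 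These agree by the standard classical identity, whose proof via $n$-fold integration by parts uses precisely the vanishing $G^{(j)}(\psi(a)) = 0$ for $j < n$, a property inherited from $D_\psi^j g(a) = 0$. Combining this with the trivial observation $D_\psi^n T_{n-1} \equiv 0$, which yields ${^C}D^{\alpha,\psi}_{[a,x]} f = {^C}D^{\alpha,\psi}_{[a,x]} g$, closes the chain of equalities.

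The second identity is then a corollary. By linearity and the $\psi$-power rule
$$D^{\alpha,\psi}_{[a,x]}\bigl(\psi(x)-\psi(a)\bigr)^k = \frac{\Gamma(k+1)}{\Gamma(k-\alpha+1)}\bigl(\psi(x)-\psi(a)\bigr)^{k-\alpha},$$
which follows from computing $I^{n-\alpha,\psi}_{[a,x]}\bigl(\psi(\cdot)-\psi(a)\bigr)^k$ via the substitution $u = (\psi(t)-\psi(a))/(\psi(x)-\psi(a))$ (reducing to a beta integral) and then applying $D_\psi^n$, dividing by $k!$ produces exactly the coefficient $1/\Gamma(k-\alpha+1)$ appearing in the statement. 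The main obstacle I anticipate is the change-of-variables step: one must verify carefully that $\psi$-fractional differentiation commutes with the composition $f \mapsto f \circ \psi^{-1}$ in the required way, and that the pointwise vanishing of $D_\psi^j g$ at $a$ transfers cleanly to vanishing of the ordinary derivatives of $G$ at $\psi(a)$. Under the standing hypotheses $\psi \in C^n(\Omega)$, $\psi' \neq 0$, and $f \in C^n(\Omega)$, this is routine but requires careful chain-rule bookkeeping.
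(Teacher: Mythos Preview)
Your proposal is correct. Note, however, that the paper does not actually supply its own proof of this theorem: it is stated as a known result with citations to Almeida (2017), Jarad--Abdeljawad (2020), and Sousa--Oliveira (2018), and no proof environment follows. So there is no ``paper's proof'' to compare against in the strict sense.

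That said, your argument is sound and is essentially the standard one found in those references. The reduction via the $\psi$-Taylor remainder $g = f - T_{n-1}$, the observation $D_\psi^n T_{n-1} \equiv 0$, and the power rule for $D^{\alpha,\psi}_{[a,x]}(\psi(\cdot)-\psi(a))^k$ are exactly the ingredients Almeida uses. Your change-of-variables route---pushing everything through $s = \psi(x)$ to reduce to the classical Riemann--Liouville/Caputo identity---is also in the spirit of the paper itself, which invokes precisely this relation (cf.\ the formula $D^{\alpha}_{[\psi(a),s]}(f\circ\psi^{-1})(s) = D^{\alpha,\psi}_{[a,x]}f(x)$ in the numerical section) for discretization purposes. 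The bookkeeping concern you flag about transferring $D_\psi^j g(a)=0$ to $G^{(j)}(\psi(a))=0$ is not a real obstacle: since $D_\psi$ becomes $d/ds$ under the substitution, the identification $G^{(j)}(\psi(x)) = D_\psi^j g(x)$ is immediate by iterated chain rule, given $\psi \in C^n$ and $\psi' \neq 0$.
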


\begin{rem}\label{equivalence}
Assume $f \in C^n(\Omega)$, for all $k=0, \dots, n-1$, if $D_{\psi}^k f(a)=0$, we have
\begin{equation}\label{relation at a}
{^C}D^{\alpha,\psi}_{[a,x]}f(x)=D^{\alpha,\psi}_{[a,x]}f(x),
\end{equation}
and if $D_{\psi}^k f(b)=0$, we deduce
\begin{equation}\label{relation at b}
{^C}D^{\alpha,\psi}_{[x,b]}f(x)=D^{\alpha,\psi}_{[x,b]}f(x).
\end{equation}
Thus, if for all $k=0, \dots, n-1$, $D_{\psi}^k f(a)=0$ and $D_{\psi}^k f(b)=0$, from the definitions of Riesz fractional derivative, one can obtain
\begin{equation}\label{relation riesz}
{^{RC}}D^{\alpha,\psi}_{[a,b]}f(x)={^R}D^{\alpha,\psi}_{[a,b]}f(x).
\end{equation}
Namely, under the above conditions, the general Riemann-Liouville fractional derivatives are equivalent to the general Caputo fractional derivatives.
\end{rem}

In addition, there is a common property for the fractional calculus operator.
\begin{property}\label{pro:linearity}(\textbf{Linearity})
Let $\mathcal{P}$ denote the fractional calculus operator, $k, l\in \mathbb{R}$ are constants, for any fractional integrable or differentiable functions $f(x)$ and $g(x)$, we have:
\[\mathcal{P}(kf(x)+lg(x))=k\mathcal{P}(f(x))+l\mathcal{P}(g(x)).\]
\end{property}

The next, we should establish fractional integration by parts formula similarly as \cite{Almeida2017}, which is useful to derive the variational integrals with fractional derivatives.
\begin{thm}\label{fractional integration by parts formula} (\textbf {fractional integration by parts formula})
Given $f \in C(\Omega)$ and $g \in C^n (\Omega)$, we have that for all $\alpha>0$,
\begin{equation}\label{left fractional integration by parts formula}
\begin{split}
\int^b_a\psi'(x)f(x)\cdot{^C}D^{\alpha,\psi}_{[a,x]}g(x)dx
=&\int^b_a\psi'(x)\cdot D^{\alpha,\psi}_{[x,b]}f(x)\cdot g(x)dx\\
+&\sum^{n-1}_{k=0}\left[D^{\alpha-n+k,\psi}_{[x,b]}f(x)\cdot D_{\psi}^{n-k-1}g(x)\right]^{x=b}_{x=a},
\end{split}
\end{equation}
and
\begin{equation}\label{right fractional integration by parts formula}
\begin{split}
\int^b_a\psi'(x)f(x)\cdot{^C}D^{\alpha,\psi}_{[x,b]}g(x)dx
=&\int^b_a\psi'(x)\cdot D^{\alpha,\psi}_{[a,x]}\cdot f(x)g(x)dx\\
+&\sum^{n-1}_{k=0}\left[(-1)^{n+k}\cdot D^{\alpha-n+k,\psi}_{[a,x]}f(x)\cdot D_{\psi}^{n-k-1}g(x)\right]^{x=b}_{x=a}.
\end{split}
\end{equation}
Then,
\begin{equation}\label{Riesz fractional integration by parts formula}
\begin{split}
\int^b_a\psi'(x)f(x)\cdot{^{RC}}D^{\alpha,\psi}_{[a,b]}g(x)dx
=&(-1)^n\int^b_a\psi'(x)\cdot{^R}D^{\alpha,\psi}_{[a,b]}f(x)\cdot g(x)dx\\
+&\sum^{n-1}_{k=0}\left[(-1)^{k}\cdot{^R}D^{\alpha-n+k,\psi}_{[a,b]}f(x)\cdot D_{\psi}^{n-k-1}g(x)\right]^{x=b}_{x=a}.
\end{split}
\end{equation}
\end{thm}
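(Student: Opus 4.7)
The plan is to prove the three formulas in sequence: first the one for the left Caputo derivative (\ref{left fractional integration by parts formula}), then by symmetry the one for the right Caputo derivative (\ref{right fractional integration by parts formula}), and finally assemble these into the Riesz formula (\ref{Riesz fractional integration by parts formula}). The key tool throughout will be the observation that the weight $\psi'(t)$ that appears in the definitions of $D_\psi$ and of the $\psi$-integrals is precisely what converts ordinary integration by parts into a clean identity for $D_\psi$: since $\psi'(t)(D_\psi h)(t)=h'(t)$, one has
\begin{equation*}
\int_a^b \psi'(t)\,\phi(t)\,(D_\psi^n g)(t)\,dt=[\phi\cdot D_\psi^{n-1}g]_a^b-\int_a^b \psi'(t)\,(D_\psi \phi)(t)\,(D_\psi^{n-1} g)(t)\,dt.
\end{equation*}

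For (\ref{left fractional integration by parts formula}), I would first substitute the definition ${^C}D^{\alpha,\psi}_{[a,x]}g(x)=I^{n-\alpha,\psi}_{[a,x]}(D_\psi^n g)(x)$ into the left-hand side, swap the order of the $x$- and $t$-integrals by Fubini (the integrability being guaranteed by $f\in C(\Omega)$, $g\in C^n(\Omega)$, and the integrable singularity $(\psi(x)-\psi(t))^{n-\alpha-1}$), and recognize the inner $x$-integral as $I^{n-\alpha,\psi}_{[t,b]}f(t)$. This reduces the problem to
\begin{equation*}
\text{LHS}=\int_a^b \psi'(t)\,(D_\psi^n g)(t)\,\phi(t)\,dt,\qquad \phi(t):=I^{n-\alpha,\psi}_{[t,b]}f(t).
\end{equation*}
Next I would iterate the identity above $n$ times. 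A straightforward induction shows that after $n$ iterations one collects
\begin{equation*}
\sum_{j=0}^{n-1}(-1)^{j}\bigl[D_\psi^{j}\phi(t)\cdot D_\psi^{n-1-j}g(t)\bigr]_a^b+(-1)^n\int_a^b \psi'(t)\,D_\psi^n\phi(t)\,g(t)\,dt.
\end{equation*}
The final step is identification: using the right-sided Riemann-Liouville definition $D^{\beta,\psi}_{[x,b]}=(-D_\psi)^m I^{m-\beta,\psi}_{[x,b]}$ together with the convention $D^{-\gamma}=I^{\gamma}$ for $\gamma>0$, one verifies $(-1)^j D_\psi^j\phi=D^{\alpha-n+j,\psi}_{[x,b]}f$ for $j=0,\dots,n-1$ and $(-1)^n D_\psi^n\phi=D^{\alpha,\psi}_{[x,b]}f$. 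Re-indexing $j\mapsto k$ yields exactly the right-hand side of (\ref{left fractional integration by parts formula}).

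The proof of (\ref{right fractional integration by parts formula}) follows by a mirror argument, the main difference being that the operator $-D_\psi$ replaces $D_\psi$ in the definition of the right Caputo derivative, which inserts an extra $(-1)^n$ in the interior integral and produces the sign $(-1)^{n+k}$ in the boundary sum. The Riesz identity (\ref{Riesz fractional integration by parts formula}) is then obtained by taking the combination $\tfrac{1}{2}\bigl({^C}D^{\alpha,\psi}_{[a,x]}g+(-1)^n\,{^C}D^{\alpha,\psi}_{[x,b]}g\bigr)$ and summing the two formulas just established; the boundary contributions combine to give $(-1)^k\,{^R}D^{\alpha-n+k,\psi}_{[a,b]}f$ thanks to the definition of the Riesz-RL derivative, and the interior term acquires the factor $(-1)^n$ uniformly.

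The main obstacle I anticipate is bookkeeping rather than conceptual: correctly tracking the signs produced by the $n$ successive integrations by parts, matching them against the $(-D_\psi)^m$ in the right Riemann-Liouville definition, and handling the borderline index $k=0$ where the ``derivative'' $D^{\alpha-n,\psi}_{[x,b]}f$ is in fact a fractional integral. A secondary technical point is the rigorous justification of Fubini and the existence of the boundary limits as $x\to a^+$ and $x\to b^-$; both follow from the hypothesis $g\in C^n(\Omega)$ combined with standard estimates on $I^{n-\alpha,\psi}_{[t,b]}f$ near the endpoints, but these need to be recorded carefully so that the boundary terms $[\,\cdot\,]_a^b$ are well defined.
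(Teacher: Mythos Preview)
Your proposal is correct and follows essentially the same route as the paper: substitute the Caputo definition, swap the double integral via Fubini (the paper calls it Dirichlet's formula) to recognize $I^{n-\alpha,\psi}_{[t,b]}f$, iterate ordinary integration by parts $n$ times using $\psi'(t)D_\psi h=h'$, identify $(-1)^k D_\psi^k I^{n-\alpha,\psi}_{[x,b]}f=D^{\alpha-n+k,\psi}_{[x,b]}f$, then handle the right-sided and Riesz cases by symmetry and linear combination. The paper carries out the first two integrations by parts explicitly before stating the general pattern, whereas you package it as an induction, but the argument is the same.
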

\begin{proof}
Now, we prove that above first equation is true, the other two equations are true similarly. Before that, using the definite of Riemann-Liouville Fractional derivative and Dirichlet's formula, we first compute
\begin{align*}
\int^b_a\psi'(x)f(x)\cdot{^C}D^{\alpha,\psi}_{[a,x]}g(x)dx
=&\int^b_a\psi'(x)f(x)\cdot I^{n-\alpha,\psi}_{[a,x]}D_{\psi}^n g(x)dx\\
=&\frac{1}{\Gamma(n-\alpha)}\int^b_a\psi'(x)f(x)\int_a^x\frac{\psi'(t)\cdot D_{\psi}^ng(t)dt}{(\psi(x)-\psi(t))^{\alpha-n+1}}dx\\
=&\frac{1}{\Gamma(n-\alpha)}\int^b_a \psi'(t)\cdot D_{\psi}^ng(t)\int_t^b\frac{\psi'(x)f(x)dx}{(\psi(x)-\psi(t))^{\alpha-n+1}}dt\\
=&\int_a^b\psi'(t)\cdot D_{\psi}^ng(t)\cdot I_{[t,b]}^{n-\alpha,\psi}f(t)dt\\
=&\int^b_a \psi'(x)\cdot I^{n-\alpha,\psi}_{[x,b]}f(x)\cdot D_{\psi}^ng(x)dx\\
=&\int^b_aI^{n-\alpha,\psi}_{[x,b]}f(x)\cdot\frac{d}{dx}(D_{\psi}^{n-1} g(x))dx,
\end{align*}

by applying integration by parts for the right side of last equation, we get
\begin{align*}
&\left[I^{n-\alpha,\psi}_{[x,b]}f(x)\cdot D_{\psi}^{n-1} g(x)\right]^{x=b}_{x=a}-\int^b_a\frac{d}{dx}(I^{n-\alpha,\psi}_{[x,b]}f(x))\cdot D_{\psi}^{n-1} g(x)dx\\
=&\left[I^{n-\alpha,\psi}_{[x,b]}f(x)\cdot D_{\psi}^{n-1} g(x)\right]^{x=b}_{x=a}-\int^b_a\psi'(x)\cdot D_{\psi}(I^{n-\alpha,\psi}_{[x,b]}f(x))\cdot D_{\psi}^{n-1} g(x)dx\\
=&\left[I^{n-\alpha,\psi}_{[x,b]}f(x)\cdot D_{\psi}^{n-1} g(x)\right]^{x=b}_{x=a}-\int^b_a D_{\psi}(I^{n-\alpha,\psi}_{[x,b]}f(x))\cdot\frac{d}{dx}(D_{\psi}^{n-2} g(x))dx.
\end{align*}
Let us apply integration by parts once more, the last formula is equal to
$$\sum^1_{k=0}\left[(-1)^k\cdot D_{\psi}^k (I^{n-\alpha,\psi}_{[x,b]}f(x))\cdot D_{\psi}^{n-k-1} g(x)\right]^{x=b}_{x=a}+\int^b_a \psi'(x)(-1)^2\cdot D_{\psi}^2(I^{n-\alpha,\psi}_{[x,b]}f(x))\cdot D_{\psi}^{n-2}g(x)dx.$$
Repeating the process, we get
\begin{align*}
&\sum^{n-1}_{k=0}\left[(-1)^k\cdot D_{\psi}^k (I^{n-\alpha,\psi}_{[x,b]}f(x))\cdot D_{\psi}^{n-k-1} g(x)\right]^{x=b}_{x=a}+\int^b_a \psi'(x)(-1)^n\cdot D_{\psi}^n(I^{n-\alpha,\psi}_{[x,b]}f(x))\cdot g(x)dx\\
=&\sum^{n-1}_{k=0}\left[(D^{\alpha-n+k,\psi}_{[x,b]}f(x))\cdot D_{\psi}^{n-k-1} g(x)\right]^{x=b}_{x=a}+\int^b_a \psi'(x)\cdot D^{\alpha,\psi}_{[x,b]}f(x)\cdot g(x)dx.
\end{align*}
Consequently
\begin{align*}
\int^b_a\psi'(x)f(x)\cdot{^C} D^{\alpha,\psi}_{[a,x]}g(x)dx
=&\int^b_a\psi'(x)\cdot D^{\alpha,\psi}_{[x,b]}f(x)\cdot g(x)dx\\
+&\sum^{n-1}_{k=0}\left[D^{\alpha-n+k,\psi}_{[x,b]}f(x)\cdot D_{\psi}^{n-k-1}g(x)\right]^{x=b}_{x=a}.
\end{align*}

Similarly, we can prove that the second equation is true, i.e.,
\begin{align*}
\int^b_a\psi'(x)f(x)\cdot{^C}D^{\alpha,\psi}_{[x,b]}g(x)dx
=&\int^b_a\psi'(x)\cdot D^{\alpha,\psi}_{[a,x]}f(x)\cdot g(x)dx\\
+&\sum^{n-1}_{k=0}\left[(-1)^{n+k}\cdot D^{\alpha-n+k,\psi}_{[a,x]}f(x)\cdot D_{\psi}^{n-k-1}g(x)\right]^{x=b}_{x=a}.
\end{align*}

Then, using the definitions of Riesz fractional derivative, we see that,
\begin{align*}
&\int^b_a\psi'(x)f(x)\cdot{^{RC}}D^{\alpha,\psi}_{[a,b]}g(x)dx\\
=&\frac{1}{2}\int^b_a\psi'(x)f(x)\left({^C} D^{\alpha,\psi}_{[a,x]}g(x)+(-1)^n {^C} D^{\alpha,\psi}_{[x,b]}g(x)\right)dx\\
=&\frac{1}{2}\int^b_a\psi'(x)f(x)\cdot{^C} D^{\alpha,\psi}_{[a,x]}g(x)dx+ \frac{(-1)^n}{2}\int^b_a\psi'(x)f(x)\cdot {^C} D^{\alpha,\psi}_{[x,b]}g(x)dx\\
=&\frac{1}{2}\int^b_a\psi'(x)\cdot D^{\alpha,\psi}_{[x,b]}f(x)\cdot g(x)dx+\frac{1}{2}\sum^{n-1}_{k=0}\left[D^{\alpha-n+k,\psi}_{[x,b]}f(x)\cdot D_{\psi}^{n-k-1}g(x)\right]^{x=b}_{x=a}\\
&+\frac{(-1)^n}{2}\int^b_a\psi'(x)\cdot D^{\alpha,\psi}_{[a,x]}f(x)\cdot g(x)dx+\frac{(-1)^n}{2}\sum^{n-1}_{k=0}\left[(-1)^{n+k}\cdot D^{\alpha-n+k,\psi}_{[a,x]}f(x)\cdot D_{\psi}^{n-k-1}g(x)\right]^{x=b}_{x=a}\\
=&(-1)^n\int^b_a\psi'(x)\left[\frac{1}{2}\left(D^{\alpha,\psi}_{[a,x]}f(x)+(-1)^nD^{\alpha,\psi}_{[x,b]}f(x)\right)\right]g(x)dx\\
&+\sum^{n-1}_{k=0}\left[(-1)^k\frac{1}{2}\left(D^{\alpha-n+k,\psi}_{[a,x]}f(x)+(-1)^kD^{\alpha-n+k,\psi}_{[x,b]}f(x)\right)D_{\psi}^{n-k-1}g(x)\right]^{x=b}_{x=a}\\
=&(-1)^n\int^b_a\psi'(x)\cdot{^R}D^{\alpha,\psi}_{[a,b]}f(x)\cdot g(x)dx\\
&+\sum^{n-1}_{k=0}\left[(-1)^k\cdot{^R}D^{\alpha-n+k,\psi}_{[a,b]}f(x)\cdot D_{\psi}^{n-k-1}g(x)\right]^{x=b}_{x=a}.
\end{align*}
\end{proof}

Obviously, if for all $k=0, \dots, n-1$, $D_{\psi}^k g(a)=0$ and $D_{\psi}^k g(b)=0$, combining with equations \eqref{relation at a}, \eqref{relation at b} and \eqref{relation riesz}, then equations \eqref{left fractional integration by parts formula}, \eqref{right fractional integration by parts formula} and \eqref{Riesz fractional integration by parts formula} in Theorem \ref{fractional integration by parts formula} will become
\begin{equation}\label{left parts equation}
\int^b_a\psi'(x)f(x)\cdot D^{\alpha,\psi}_{[a,x]}g(x)dx=\int^b_a\psi'(x)\cdot D^{\alpha,\psi}_{[x,b]}f(x)\cdot g(x)dx,
\end{equation}
\begin{equation}\label{right parts equation}
\int^b_a\psi'(x)f(x)\cdot D^{\alpha,\psi}_{[x,b]}g(x)dx=\int^b_a\psi'(x)\cdot D^{\alpha,\psi}_{[a,x]}f(x)\cdot g(x)dx,
\end{equation}
and
\begin{equation}\label{riesz parts equation1}
\int^b_a\psi'(x)f(x)\cdot{^{R}}D^{\alpha,\psi}_{[a,b]}g(x)dx=(-1)^n\int^b_a\psi'(x)\cdot{^R}D^{\alpha,\psi}_{[a,b]}f(x)\cdot g(x)dx.
\end{equation}

In subsequent papers, to distinguish the definitions, we use $^CD^{\alpha,\psi}$ and $D^{\alpha,\psi}$ to represent the fractional derivative based on Caputo and Riemann-Liouville derivative respectively.

\section{The GFTG priors}\label{sec3}
In this section, we will describe the construction of GFTG priors, which based on the Bayesian framework with hybrid prior to inverse problems.

\subsection{The Bayesian framework and the hybrid prior}\label{sec3.1}
Firstly, we will briefly introduce the basic framework for the infinite dimensional Bayesian approach to inverse problems. Let $X$ is a separable Hilbert space with inner product$\langle\cdot ,\cdot \rangle_{ X }$, $G: X \to \mathbb{R}^{m}$ is a measurable mapping known as forward operator. Our aim is to solve the inverse problem of finding $u$ from $y$ by
\begin{eqnarray}
\label{bayesian model}
y=G(u)+\eta,
\end{eqnarray}
where, $u\in X$ is the unknown function, $y\in\mathbb{R}^{m}$ is the finite-dimensional observed data, and $\eta$ is an $m$-dimensional Gaussian noise with zero mean and covariance matrix as $\Sigma$, namely, $\eta\thicksim \mathcal{N}(0,\Sigma)$.

The Bayesian formula is the core of Bayesian inference method, which reveal the relationship between the prior and posterior distribute of the unknown function. We assume that the prior measure of $u$ is $\mu_{pr}$ which is a probability measure defined on $X$. Then, the posterior measure of $u$, denoted as $\mu^{y}$, is provided by the Radon-Nikodym (R-N) derivative
\begin{equation} \label{R-N}
\frac{d\mu^y}{d\mu_{pr}}(u)=\frac{1}{Z} \exp(-\Phi(u)),
\end{equation}
where $Z$ is a normalization constant, and
\begin{equation}\label{potentialfuction}
\Phi^y(u) := \frac12\big\|G(u)-y\big\|^2_\Sigma=\frac12\big\|\Sigma^{-1/2}(G(u)-y)\big\|_2^2,
\end{equation}
is potential function in Bayesian theory which often referred to as the data fidelity term in deterministic inverse problems. In what follows, without causing any ambiguity, we shall drop the superscript $y$ in $\Phi^y$ for simplicity. Equation \eqref{R-N} can be interpreted as the infinite dimensional Bayes' rule.

We can see that the most popular prior in the infinite dimensional setting is the Gaussian measure. Therefore, we assume that the prior is a Gaussian measure defined on $X$ with zero mean  and covariance operator $\mathcal{C}_{0}$, i.e., $\mu_{pr}=\mu_0$ where $\mu_0 = \mathcal{N}(0,\mathcal{C}_{0})$.
Note that $\mathcal{C}_{0}$ is symmetric positive and of trace class \cite {Stuart2015}.
In order to overcome the shortcoming of Gaussian prior measure, a hybrid TG prior is proposed in \cite {Z.Yao2016}, which can be able to well simulate the true functions with sharp jumps.

Next, we shall show the establishment of Bayesian formula with the hybrid prior. In this prior, let Gaussian measure $\mu_{0}$ as the inference measure, and the prior measure $\mu_{pr}$ is given by
\begin{equation}\label{PriorMeasure}
\frac{d\mu_{pr}}{d\mu_{0}}(u) \propto\exp(-R(u)),
\end{equation}
where $R(u)$ represents additional prior information (or regularization) on $u$. It is easy to see that, under this assumption, the R-N derivative of $\mu^y$ with respect to $\mu_0$ is
\begin{equation}\label{hybridR-N}
\frac{d\mu^y}{d\mu_0}(u) \propto \exp(-\Phi(u)-R(u)),
\end{equation}
which returns to the conventional formulation with Gaussian priors.

\subsection{The Fractional Total Variation}\label{FTV}\label{sec3.2}

In this subsection, we briefly define the fractional Sobolev space and prove it as a separable Banach space. The separablility of spaces plays an important role in the development of probability and integration in infinite dimensional spaces.

First we give a definition of the fractional Sobolev space as follow.
\begin{defn}\label{sobolevspace}(\textbf{Fractional Sobolev Space}) For any positive integer $p\in\mathbf{N}^+$, let $$W^{\alpha,\psi}_{p}(\Omega)=\{u\in L^p(\Omega)\big|\|u\|_{W^{\alpha,\psi}_{p}(\Omega)}<+\infty\}$$ be a fractional Sobolev function space endowed with the norm
$$\|u\|_{W^{\alpha,\psi}_{p}(\Omega)}=\left(\int_a^b|u|^p dx+\int_a^b|D_{[a,b]}^{\alpha,\psi} u|^p dx\right)^{\frac{1}{p}}.$$
\end{defn}
Specially, when $p=2$, the above norm is generated by the following inner product$$\langle u,v\rangle_{W^{\alpha,\psi}_{2}(\Omega)}=\int_a^buvdx+\int_a^b(D_{[a,b]}^{\alpha,\psi}u)(D_{[a,b]}^{\alpha,\psi}v)dx,\ \ u,v \in W^{\alpha,\psi}_{2}(\Omega).$$

%\begin{rem}\label{general classical}
%In \cite{Mohamed2017}, there is a relationship between the general Riemann-Liouvlle fractional derivative and the classical one, i.e., choosing $\psi=x$ in definitions \eqref{fractional derivative}, as following :
%$$D_{[\psi(a),s]}^{\alpha}(f\circ\psi^{-1})(s)=D_{[a,x]}^{\alpha,\psi}f(x)=D_{[a,x]}^{\alpha,\psi}f(\psi^{-1}(s)),\ as\  x=\psi^{-1}(s).$$
%and the right fractional derivative has the similar relationship as the left:
%$$D_{[s,\psi(b)]}^{\alpha}(f\circ\psi^{-1})(s)=D_{[x,b]}^{\alpha,\psi}f(x)=D_{[x,b]}^{\alpha,\psi}f(\psi^{-1}(s)),\ as\  x=\psi^{-1}(s).$$
%then,
%\begin{equation}\label{relationRL}
%D_{[\psi(a),\psi(b)]}^{\alpha}(f\circ\psi^{-1})(s)=D_{[a,b]}^{\alpha,\psi}f(x)=D_{[a,b]}^{\alpha,\psi}f(\psi^{-1}(s)).
%\end{equation}
%Similarly, we can deduce the analogous relationship between the general Caputo fractional derivative and the classical one, i.e., choosing $\psi=x$ in definitions \eqref{Caputo fractional derivative}.
%\end{rem}

%\begin{rem}\label{equivalence}(\textbf{Equivalence})
%In fact, classical Riemann-Liouvlle and Caputo fractional derivative are equivalent in some conditions \cite{Pdlubny1999,J.P.Zhang2015}: For every $\alpha>0$, if the function $f(x)$ is $(n-1)$-order continuously differentiable and $f^{(n)}(x)$ is integrable in $[a', b']$, the R-L fractional derivative $D^{¦Á}_{[a',x]}f(x)$ and $D^{¦Á}_{[x,b']}f(x)$ exist. Furthermore if $f(x)$ satisfies the homogeneous boundary conditions, the classical RL and the Caputo derivatives are equivalent.
%\end{rem}

Before discussing the total fractional-order variation, we give the following definition, which based on the equivalence in Remark \ref{equivalence}.
\begin{defn}\label{space of test functions}(\textbf{Spaces of test functions})
Denote by $\mathcal{C}^{\ell}(\Omega,\mathbb{R}^d)$ the space of an $\ell$-order continuously differentiable functions in $\Omega \subset \mathbb{R}^d$. Then an $\ell$-order compactly supported continuous function space as a subspace $\mathcal{C}^{\ell}(\Omega,\mathbb{R}^d)$ is denoted by $\mathcal{C}^{\ell}_{0}(\Omega,\mathbb{R}^d)$, in which each member $v:\Omega\mapsto\mathbb{R}^d$ satisfies the homogeneous boundary conditions $D_{\psi}^{i}v(x)|_{\partial\Omega}=0$ for all $i=0, 1, \dots, \ell$.
\end{defn}

So, if $g(x)\in \mathcal{C}^{n}_{0}(\Omega,\mathbb{R})$ is a test function, the $\alpha$-order integration by parts formulas can also be rewritten as equations \eqref{left parts equation}, \eqref{right parts equation} and \eqref{riesz parts equation1}.

Next, we can prove that the fractional order Sobolev space is a Banach space and Hilbert space with $1\leqslant p<\infty$ following by classical Sobolev space as \cite{Brezis2011,Evans1998} and \cite{Agrawal2007,Bourdin2015,Idczak2013}.
\begin{lem}\label{Banachspace}
The fractional Sobolev space $W^{\alpha,\psi}_{p}(\Omega)$ is a Banach space.
\end{lem}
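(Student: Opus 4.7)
The plan is to verify the two defining properties of a Banach space: that $W^{\alpha,\psi}_{p}(\Omega)$ is a normed vector space, and that it is complete. First I would check the norm axioms. Nonnegativity, definiteness (coming from the $L^p$ part alone) and absolute homogeneity are immediate from the corresponding properties of the $L^p$ norm together with Property \ref{pro:linearity} (linearity of the fractional calculus operator). The triangle inequality follows by viewing $\|u\|_{W^{\alpha,\psi}_p}$ as the $\ell^p$-combination of $\|u\|_{L^p}$ and $\|D^{\alpha,\psi}_{[a,b]}u\|_{L^p}$: apply Minkowski's inequality in $L^p$ to each slot (using linearity of $D^{\alpha,\psi}_{[a,b]}$ to split $D^{\alpha,\psi}_{[a,b]}(u+v)$), then apply Minkowski in $\mathbb{R}^2$ under the $\ell^p$ norm.

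For completeness, I would take a Cauchy sequence $\{u_n\}\subset W^{\alpha,\psi}_{p}(\Omega)$. By the definition of the norm, both $\{u_n\}$ and $\{D^{\alpha,\psi}_{[a,b]}u_n\}$ are Cauchy in $L^p(\Omega)$, so by completeness of $L^p$ there exist $u,v\in L^p(\Omega)$ with $u_n\to u$ and $D^{\alpha,\psi}_{[a,b]}u_n\to v$ in $L^p(\Omega)$. The crucial remaining step is to identify $v=D^{\alpha,\psi}_{[a,b]}u$, which will give $u\in W^{\alpha,\psi}_{p}(\Omega)$ and $u_n\to u$ in the $W^{\alpha,\psi}_{p}$-norm.

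I expect this identification to be the main obstacle, and I would handle it through a weak formulation based on the fractional integration by parts formulas \eqref{left parts equation}, \eqref{right parts equation}, \eqref{riesz parts equation1} in Theorem \ref{fractional integration by parts formula}. For any test function $\varphi\in\mathcal{C}^n_0(\Omega,\mathbb{R})$ in the sense of Definition \ref{space of test functions}, the boundary terms all vanish, so the integration by parts identity takes the clean form
$$\int_a^b \psi'(x)\,\varphi(x)\,D^{\alpha,\psi}_{[a,b]}u_n(x)\,dx = \pm\int_a^b \psi'(x)\,\widetilde{D}^{\alpha,\psi}\varphi(x)\,u_n(x)\,dx,$$
where $\widetilde{D}^{\alpha,\psi}$ is the corresponding adjoint fractional derivative (right-sided if the original is left-sided, with the Riesz sign factor in the Riesz case). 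Since $\psi'\in C(\Omega)$ is bounded on the compact interval $[a,b]$, and $\varphi$, $\widetilde{D}^{\alpha,\psi}\varphi$ are bounded, Hölder's inequality lets me pass to the limit on both sides using $u_n\to u$ and $D^{\alpha,\psi}_{[a,b]}u_n\to v$ in $L^p$. The resulting identity
$$\int_a^b \psi'(x)\,\varphi(x)\,v(x)\,dx = \pm\int_a^b \psi'(x)\,\widetilde{D}^{\alpha,\psi}\varphi(x)\,u(x)\,dx$$
holds for every $\varphi\in\mathcal{C}^n_0(\Omega,\mathbb{R})$, which is precisely the weak characterization of $v=D^{\alpha,\psi}_{[a,b]}u$.

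The delicate point inside this obstacle step is justifying that the weak identity uniquely pins $v$ down as the pointwise fractional derivative; this uses density of $\mathcal{C}^n_0(\Omega)$-type test functions in $L^{p'}(\Omega)$ (under the operator $\widetilde{D}^{\alpha,\psi}$ composed with multiplication by $\psi'$), which is standard in the classical Sobolev setting and adapts here thanks to $\psi'(x)\neq 0$ on $\Omega$. Once uniqueness is established, the identification $v=D^{\alpha,\psi}_{[a,b]}u$ yields $u_n\to u$ in $W^{\alpha,\psi}_{p}(\Omega)$, completing the proof. The $p=2$ inner product formula given after Definition \ref{sobolevspace} would then, as a bonus, immediately upgrade $W^{\alpha,\psi}_{2}(\Omega)$ to a Hilbert space.
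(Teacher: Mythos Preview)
Your proposal is correct and follows essentially the same route as the paper: verify the norm axioms via Minkowski, then prove completeness by passing to $L^p$-limits and identifying the limit of the fractional derivatives through the integration-by-parts formula \eqref{riesz parts equation1} against test functions in $\mathcal{C}^n_0(\Omega,\mathbb{R})$. If anything, you are more careful than the paper about the identification step $v=D^{\alpha,\psi}_{[a,b]}u$, which the paper's proof asserts in its final equality without further comment.
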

\begin{proof}
(1) First, we should testify that the $\|u\|_{W^{\alpha,\psi}_{p}(\Omega)}$ is a norm. By the definitions of $\|u\|_{L^p(\Omega)}$ and the fractional derivative $D_{[a,b]}^{\alpha,\psi}u$ with the linearity, we can easily prove
\[\|qu\|_{W^{\alpha,\psi}_{p}(\Omega)}=|q|\|u\|_{W^{\alpha,\psi}_{p}(\Omega)},\ \ \ q\in\mathbb{R}^1,\] and
\[\|u\|_{W^{\alpha,\psi}_{p}(\Omega)}=0 \ if\  and\  only\  if\  u=0 \ a.e.\]
Next, assume $u,v \in W^{\alpha,\psi}_{p}(\Omega)$ and $1\leqslant p<\infty$, according to the Minkowski's inequality, we can obtain
\begin{align*}
\|u+v\|_{W^{\alpha,\psi}_{p}(\Omega)}
=&\left(\|u+v\|_{L^{p}(\Omega)}^{p}+\|D_{[a,b]}^{\alpha,\psi} u+D_{[a,b]}^{\alpha,\psi} v\|_{L^{p}(\Omega)}^{p}\right)^{\frac{1}{p}}\\
\leqslant&[(\|u\|_{L^{p}(\Omega)}+\|v\|_{L^{p}(\Omega)})^{p}+(\|D_{[a,b]}^{\alpha,\psi} u\|_{L^{p}(\Omega)}+\|D_{[a,b]}^{\alpha,\psi} v\|_{L^{p}(\Omega)})^{p}]^{\frac{1}{p}}\\
\leqslant&\left(\|u\|_{L^{p}(\Omega)}^{p}+\|D_{[a,b]}^{\alpha,\psi} u\|_{L^{p}(\Omega)}^{p}\right)^{\frac{1}{p}}+\left(\|v\|_{L^{p}(\Omega)}^{p}+\|D_{[a,b]}^{\alpha,\psi} v\|_{L^{p}(
\Omega)}^{p}\right)^{\frac{1}{p}}\\
=&\|u\|_{W^{\alpha,\psi}_{p}(\Omega)}+\|v\|_{W^{\alpha,\psi}_{p}(\Omega)}.
\end{align*}
Hence, $W^{\alpha,\psi}_{p}(\Omega)$ is a norm space.

(2) Then, it only need to prove the completeness of $W^{\alpha,\psi}_{p}(\Omega)$. Suppose $\left\{u_m\right\}_{m=1}^{\infty}\subset{W^{\alpha,p}_{\psi}(\Omega)}$ is a Cauchy sequence, following the definition of norm, $\left\{u_m\right\}_{m=1}^{\infty}$ and $\left\{D_{[a,b]}^{\alpha,\psi}u_m\right\}_{m=1}^{\infty}$ are both Cauchy sequence in $L^p(\Omega)$. According to the completeness of $L^p(\Omega)$, there exist two functions $u$ and $u^{\alpha}$ in $L^p(\Omega)$ such that $$u_m \rightarrow u,\ \  D_{[a,b]}^{\alpha,\psi}u_m \rightarrow u^{\alpha},\ \  in\  L^p(\Omega).$$
For any $g(x)\in \mathcal{C}^{n}_{0}(\Omega,\mathbb{R})$, we discover
\begin{align*}
\int^b_a\psi'u\cdot D_{[a,b]}^{\alpha,\psi}gdx=&\lim_{m\rightarrow +\infty}\int^b_a \psi'u_m\cdot D_{[a,b]}^{\alpha,\psi}gdx\\
=&\lim_{m\rightarrow +\infty}(-1)^n\int_a^b\psi'\cdot D_{[a,b]}^{\alpha,\psi}u_m\cdot gdx\\
=&(-1)^n\int_a^b\psi'u^{\alpha}gdx\\
=&(-1)^n\int_a^b\psi'\cdot D_{[a,b]}^{\alpha,\psi}u\cdot gdx.
\end{align*}
Thus $u^{\alpha}=D_{[a,b]}^{\alpha,\psi}u$, $u \in W^{\alpha,\psi}_{p}(\Omega)$, and $\|u_m-u\|_{W^{\alpha,\psi}_{p}(\Omega)}\rightarrow 0$ when $m\to\infty$. Conclusion, $W^{\alpha,\psi}_{p}(\Omega)$ is a Banach space.\\
\end{proof}

\begin{lem}\label{separablespace}
For all $1\leqslant p<\infty$, the space $W^{\alpha,\psi}_{p}(\Omega)$ is a separable space.
\end{lem}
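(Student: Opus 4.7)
My plan is to realize $W^{\alpha,\psi}_{p}(\Omega)$ isometrically as a subset of a known separable space, then invoke the fact that every subset of a separable metric space is itself separable. This is the standard route for classical Sobolev spaces $W^{k,p}$ and it adapts cleanly here because Definition \ref{sobolevspace} endows $W^{\alpha,\psi}_{p}(\Omega)$ with a graph-type norm built from two $L^{p}$ quantities.

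First I would set $E := L^{p}(\Omega) \times L^{p}(\Omega)$ and equip $E$ with the product norm
\[
\|(f,g)\|_{E} = \bigl(\|f\|_{L^{p}(\Omega)}^{p} + \|g\|_{L^{p}(\Omega)}^{p}\bigr)^{1/p}.
\]
Since $1 \leq p < \infty$, the space $L^{p}(\Omega)$ is separable (classical result, e.g.\ via density of step functions with rational coefficients on rational partitions of $\Omega = [a,b]$); consequently $E$, as a finite product of separable metric spaces, is separable in the product norm.

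Next I define the map
\[
T : W^{\alpha,\psi}_{p}(\Omega) \longrightarrow E, \qquad T(u) = \bigl(u,\, D_{[a,b]}^{\alpha,\psi} u\bigr).
\]
By the definition of the norm on $W^{\alpha,\psi}_{p}(\Omega)$, one checks immediately that
\[
\|T(u)\|_{E} = \|u\|_{W^{\alpha,\psi}_{p}(\Omega)},
\]
so $T$ is an isometry. Hence $T\bigl(W^{\alpha,\psi}_{p}(\Omega)\bigr)$ is a metric subspace of the separable space $E$ and is therefore itself separable; pulling back a countable dense subset through the isometry $T^{-1}$ (defined on the image) yields a countable dense subset of $W^{\alpha,\psi}_{p}(\Omega)$.

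The only nontrivial verification is that $T$ is well-defined, i.e.\ that $u$ and $D_{[a,b]}^{\alpha,\psi} u$ both lie in $L^{p}(\Omega)$; but this is built into the definition of $W^{\alpha,\psi}_{p}(\Omega)$. I do not expect a serious obstacle: the argument is purely metric/topological and uses nothing beyond the isometric embedding and separability of $L^{p}$. The only point requiring minor care is confirming that the image $T(W^{\alpha,\psi}_{p}(\Omega))$, although not a priori closed in $E$ (closedness corresponds to the completeness already proven in Lemma \ref{Banachspace}), does not need to be closed for the separability conclusion—only for being a Banach space, which we already have.
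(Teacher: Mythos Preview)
Your proposal is correct and follows essentially the same approach as the paper: both embed $W^{\alpha,\psi}_{p}(\Omega)$ isometrically into $L^{p}(\Omega)\times L^{p}(\Omega)$ via $u\mapsto(u,D_{[a,b]}^{\alpha,\psi}u)$, use separability of $L^{p}$ and of finite products thereof, and then transfer separability from the image subspace back through the isometry. Your remark that closedness of the image is not needed for separability is a nice clarifying point not made explicit in the paper.
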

\begin{proof}
Let us consider the product space $(L^p)^2=L^p\times L^p$ endowed with the norm $$\|(u,v)\|_{(L^p)^2}=(\|u\|_{L^p}^p+\|v\|_{L^p}^p)^{\frac{1}{p}}.$$
Since $1 \leqslant p <\infty$, the space $(L^p,\| \cdot \|_{L^p})$ is a separable space, therefor $((L^p)^2,\| \cdot \|_{(L^p)^2})$ is also a separable space. We define $O:=\left\{(u,D_{[a,b]}^{\alpha,\psi}u)\big|u \in W^{\alpha,\psi}_{p}(\Omega)\right\}$. Obviously, $O$ is a subspace of $((L^p)^2,\| \cdot \|_{(L^p)^2})$ and then $O$ is a separable space. Finally, defining the following mapping
\begin{align*}
T: W^{\alpha,\psi}_{p}(\Omega)&\rightarrow O\subset (L^p)^2 \\
u&\mapsto(u,D_{[a,b]}^{\alpha,\psi}u).
\end{align*}
We can prove that the mapping $T$ is one to one, and $$\|T(u)\|_{(L^p)^2}=\|u\|_{W^{\alpha,\psi}_{p}(\Omega)}.$$
Consequently, the mapping $T$ is isometric isomorphic to $O$, and then $W^{\alpha,\psi}_{p}(\Omega)$ is separable space with respect to $\| \cdot \|_{W^{\alpha,\psi}_{p}(\Omega)}$. \\
\end{proof}

In particular, when $p=2$, $W^{\alpha,\psi}_{2}(\Omega)$ is a separable Hilbert space.
\begin{lem}\label{lem:embedding}
The following embedding result holds: $$W^{\alpha,\psi}_{2}(\Omega)\subset W^{\alpha,\psi}_{1}(\Omega).$$
\end{lem}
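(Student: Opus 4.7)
The plan is to derive the embedding as a direct consequence of the standard inclusion $L^2(\Omega)\subset L^1(\Omega)$ for a bounded interval $\Omega=[a,b]$, applied both to the function itself and to its fractional derivative. Since the fractional derivative operator $D^{\alpha,\psi}_{[a,b]}$ appearing in the definitions of $W^{\alpha,\psi}_1(\Omega)$ and $W^{\alpha,\psi}_2(\Omega)$ is the same operator, the argument reduces to a single integrability estimate on each of the two pieces of the norm.

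Concretely, I would fix $u\in W^{\alpha,\psi}_{2}(\Omega)$ and apply the Cauchy–Schwarz inequality on the finite interval $[a,b]$ to get
\[
\int_a^b|u|\,dx=\int_a^b|u|\cdot 1\,dx\le (b-a)^{1/2}\left(\int_a^b|u|^2\,dx\right)^{1/2},
\]
and the completely analogous estimate
\[
\int_a^b\bigl|D^{\alpha,\psi}_{[a,b]}u\bigr|\,dx\le (b-a)^{1/2}\left(\int_a^b\bigl|D^{\alpha,\psi}_{[a,b]}u\bigr|^{2}\,dx\right)^{1/2}.
\]
Both right-hand sides are finite because $u\in W^{\alpha,\psi}_{2}(\Omega)$ means exactly that $u$ and $D^{\alpha,\psi}_{[a,b]}u$ both lie in $L^2(\Omega)$, so in particular $u\in L^1(\Omega)$ and $D^{\alpha,\psi}_{[a,b]}u\in L^1(\Omega)$, which places $u$ in $W^{\alpha,\psi}_{1}(\Omega)$.

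To package this as a continuous embedding rather than a mere set-theoretic inclusion, I would add the two inequalities and then use the elementary bound $s+t\le\sqrt{2}\sqrt{s^{2}+t^{2}}$ with $s=\|u\|_{L^2}$ and $t=\|D^{\alpha,\psi}_{[a,b]}u\|_{L^2}$ to obtain a constant $C=\sqrt{2(b-a)}$ (depending only on the length of $\Omega$) such that
\[
\|u\|_{W^{\alpha,\psi}_{1}(\Omega)}\le C\,\|u\|_{W^{\alpha,\psi}_{2}(\Omega)}.
\]
This makes the inclusion map continuous and completes the proof.

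There is essentially no hard step here; the argument is a boundedness-of-domain computation. The only point that deserves a moment of care is simply observing that the definition of $W^{\alpha,\psi}_{p}(\Omega)$ uses the same fractional derivative $D^{\alpha,\psi}_{[a,b]}$ in both cases, so no reinterpretation of the derivative is required when passing from $p=2$ to $p=1$ — the containment $L^2(\Omega)\subset L^1(\Omega)$ immediately transfers to containment of the two Sobolev-type spaces.
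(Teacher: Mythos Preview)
Your proof is correct and essentially identical to the paper's own argument: both apply Cauchy--Schwarz (the paper phrases it as H\"older) on the bounded interval to each of the two terms in the norm, then combine via the elementary inequality $(s+t)^2\le 2(s^2+t^2)$ to obtain $\|u\|_{W^{\alpha,\psi}_1}\le C\|u\|_{W^{\alpha,\psi}_2}$ with $C$ depending only on $|\Omega|$. The only cosmetic difference is that the paper squares first and you take the square root at the end, but the content is the same.
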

\begin{proof}
For any $u \in W^{\alpha,\psi}_{2}(\Omega)$, and using H\"{o}lder's inequality, we deduce
\begin{equation*}
\begin{split}
\|u\|^2_{W^{\alpha,\psi}_{1}}(\Omega)=&\left(\int_a^b|u|dx+\int_a^b|D_{[a,b]}^{\alpha,\psi}u|dx\right)^2\\
\leqslant&\left[\left(\int_a^b|u|^2dx\right)^{\frac{1}{2}}\left(\int_a^b 1 dx\right)^{\frac{1}{2}}+
\left(\int_a^b|D_{[a,b]}^{\alpha,\psi}u|^2dx\right)^{\frac{1}{2}}\left(\int_a^b 1 dx\right)^{\frac{1}{2}}\right]^2\\
=&C\left[\left(\int_a^b|u|^2dx\right)^{\frac{1}{2}}+\left(\int_a^b|D_{[a,b]}^{\alpha,\psi}u|^2dx\right)^{\frac{1}{2}}\right]^2\\
\leqslant&C\left[\int_a^b|u|^2dx+\int_a^b|D_{[a,b]}^{\alpha,\psi}u|^2dx\right]\\
=&C\|u\|^2_{W^{\alpha,\psi}_{2}}(\Omega),
\end{split}
\end{equation*}
where $C$ is only dependent on the size of $\Omega$. We therefore conclude $\|u\|_{W^{\alpha,\psi}_{1}}(\Omega)\leqslant C\|u\|_{W^{\alpha,\psi}_{2}}(\Omega)$, i.e., $u\in W^{\alpha,\psi}_{1}(\Omega)$ or $W^{\alpha,\psi}_{2}(\Omega)\subset W^{\alpha,\psi}_{1}(\Omega)$.\\
\end{proof}

In a conclusion, we can choose $$X=W^{\alpha,\psi}_{2}(\Omega),$$ and total fractional order variation as the additional regularization term $R$, i.e.,
\begin{equation}\label{R definition}
R(u)=\lambda \|u\|_{FTV}=\lambda \int_a^b|D_{[a,b]}^{\alpha,\psi}u|dx,
\end{equation}
where $\lambda$ is regularization parameter.
\subsection{Theoretical properties of the GFTG prior}\label{sec3.3}
In this subsection, we discuss the common properties of the posterior distribution arising from the GFTG hybrid prior Bayesian approach to inverse problem, i.e., well-posedness and approximation.

According to \cite{Stuart2010}, we assume that the forward operator $G:W^{\alpha,\psi}_{2}(\Omega)\rightarrow \mathbb{R}^m$ satisfies the following assumptions:
\begin{assum}\label{forward operator assume} \ \\
(i) for every $\varepsilon > 0$ there exists $M=M(\varepsilon) \in \mathbb{R}$ such that, for all $u \in W^{\alpha,\psi}_{2}(\Omega)$,
$$\|G(u)\|_{\Sigma}\leqslant \exp (\varepsilon \|u\|^{2}_{W^{\alpha,\psi}_2}(\Omega)+M),$$
(ii) for every $ r>0$, there exists $K=K(r)>0$ such that, for all $u_1,u_2 \in W^{\alpha,\psi}_2(\Omega)$ with\\ $\max\left\{\|u_1\|_{W^{\alpha,\psi}_2(\Omega)},\|u_2\|_{W^{\alpha,\psi}_2(\Omega)}\right\}<r$,$$\big\|G(u_1)-G(u_2)\big\|_{\Sigma}\leqslant K\|u_1-u_2\|_{W^{\alpha,\psi}_2(\Omega)}.$$
\end{assum}

The Assumptions \ref{forward operator assume} about $G$ can derive the bounds and Lipschitz properties of $\Phi$ as Assumptions 2.6 in \cite{Stuart2010}.
We shall show that the GFTG prior is well-behaved.
\begin{lem}\label{R property}
Let $R:W^{\alpha,\psi}_{2}(\Omega)\rightarrow \mathbb{R}^m$ defines as equation \eqref{R definition}. Then $R$ satisfies the followings:\\
$(i)$ For all $u \in W^{\alpha,\psi}_{2}(\Omega)$, we have $R(u)\geqslant 0$.\\
$(ii)$ For every $r>0$, there exists $K=K(r)>0$ such that, for all $u \in W^{\alpha,\psi}_{2}(\Omega)$ with $\|u\|_{W^{\alpha,\psi}_{2}(\Omega)}<r, \\ R(u)\leqslant K$.\\
$(iii)$ For every $r>0$, there exists $L=L(r)>0$ such that, for all $u_1,u_2 \in W^{\alpha,\psi}_2(\Omega)$ with\\ $\max\left\{\|u_1\|_{W^{\alpha,\psi}_2(\Omega)},\|u_2\|_{W^{\alpha,\psi}_2(\Omega)}\right\}<r$,$$|R(u_1)-R(u_2)|\leqslant L\|u_1-u_2\|_{W^{\alpha,\psi}_2(\Omega)}.$$
\end{lem}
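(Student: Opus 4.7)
The plan is to verify the three properties directly from the definition $R(u)=\lambda\int_a^b|D^{\alpha,\psi}_{[a,b]}u|\,dx$, using the linearity of the fractional derivative (Property~\ref{pro:linearity}), the embedding in Lemma~\ref{lem:embedding}, and nothing more delicate than the Cauchy--Schwarz inequality. Since the whole argument reduces to controlling an $L^1$ integral of a fractional derivative by the $W^{\alpha,\psi}_2$ norm, none of the three steps should be hard; the main thing to keep track of is producing explicit constants $K(r)$ and $L(r)$ depending only on $r$, $\lambda$, and $|b-a|$.

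For \textbf{(i)}, I would just observe that the integrand $|D^{\alpha,\psi}_{[a,b]}u|$ is nonnegative and $\lambda>0$, so $R(u)\ge 0$ trivially. For \textbf{(ii)}, the key inequality is Cauchy--Schwarz on $(a,b)$:
\begin{equation*}
\int_a^b |D^{\alpha,\psi}_{[a,b]}u|\,dx \;\le\; \sqrt{b-a}\,\Bigl(\int_a^b |D^{\alpha,\psi}_{[a,b]}u|^2\,dx\Bigr)^{1/2} \;\le\; \sqrt{b-a}\,\|u\|_{W^{\alpha,\psi}_2(\Omega)}.
\end{equation*}
Hence whenever $\|u\|_{W^{\alpha,\psi}_2(\Omega)}<r$, we obtain $R(u)\le \lambda\sqrt{b-a}\,r =: K(r)$. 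This is essentially the statement of the embedding Lemma~\ref{lem:embedding} packaged with the definition of $R$.

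For \textbf{(iii)}, I would first use the linearity of $D^{\alpha,\psi}_{[a,b]}$ (Property~\ref{pro:linearity}) together with the reverse triangle inequality $\bigl||a|-|b|\bigr|\le|a-b|$ applied pointwise inside the integral:
\begin{equation*}
|R(u_1)-R(u_2)| \;\le\; \lambda\int_a^b\Bigl||D^{\alpha,\psi}_{[a,b]}u_1|-|D^{\alpha,\psi}_{[a,b]}u_2|\Bigr|\,dx \;\le\; \lambda\int_a^b|D^{\alpha,\psi}_{[a,b]}(u_1-u_2)|\,dx.
\end{equation*}
Then the very same Cauchy--Schwarz bound used in (ii), applied to $u_1-u_2$, gives
\begin{equation*}
|R(u_1)-R(u_2)| \;\le\; \lambda\sqrt{b-a}\,\|u_1-u_2\|_{W^{\alpha,\psi}_2(\Omega)},
\end{equation*}
so one may take $L(r)=\lambda\sqrt{b-a}$ (in fact independent of $r$, which is stronger than what is asked).

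The only place where care is genuinely needed is that the reverse triangle inequality step and the linearity of $D^{\alpha,\psi}_{[a,b]}$ must be justified for elements of $W^{\alpha,\psi}_2(\Omega)$ rather than just for smooth functions; but Lemma~\ref{Banachspace} already tells us that $D^{\alpha,\psi}_{[a,b]}u\in L^2(\Omega)\subset L^1(\Omega)$ for $u\in W^{\alpha,\psi}_2(\Omega)$, and Property~\ref{pro:linearity} gives linearity of the operator, so both manipulations are legitimate. I do not foresee any genuine obstacle in this proof.
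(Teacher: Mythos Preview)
Your proof is correct and follows essentially the same route as the paper: both arguments reduce (ii) and (iii) to the $L^2\hookrightarrow L^1$ bound on $(a,b)$, with the paper citing the embedding Lemma~\ref{lem:embedding} while you apply Cauchy--Schwarz directly to obtain the explicit constant $\sqrt{b-a}$. If anything, your handling of (iii) via the reverse triangle inequality is slightly more careful than the paper's, which writes $|R(u_1)-R(u_2)|=\lambda\|u_1-u_2\|_{FTV}$ where only $\leqslant$ is actually warranted.
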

\begin{proof}
$(i)$: This property is trivial.\\
$(ii)$: For all $u \in W^{\alpha,\psi}_{2}(\Omega)$ and given $r>0$ with $\|u\|_{W^{\alpha,\psi}_{2}(\Omega)}<r$, from the Lemma \ref{lem:embedding}, there exists a constant $C>0$ such that $$R(u)=\lambda \|u\|_{FTV}\leqslant \lambda \|u\|_{W^{\alpha,\psi}_1(\Omega)}\leqslant \lambda C \|u\|_{W^{\alpha,\psi}_2(\Omega)}\leqslant \lambda C r.$$
So, we can choose $K(r)=\lambda C r$.\\
$(iii)$: For every $r>0$ and all $u_1,u_2 \in W^{\alpha,\psi}_2(\Omega)$, there is constant $C>0$ such that $$|R(u_1)-R(u_2)|=\lambda\|u_1-u_2\|_{FTV}\leqslant \lambda \|u_1-u_2\|_{W^{\alpha,\psi}_1(\Omega)}\leqslant \lambda C \|u_1-u_2\|_{W^{\alpha,\psi}_2(\Omega)}.$$
When we choose $L(r)=\lambda C$, $(iii)$ is proved. This proof is complete.\\
\end{proof}

If $\Phi$ satisfies Assumptions 2.6 in \cite{Stuart2010} and Lemma \ref{R property} holds respect to $R$, we can obtain that $\Phi+R$ satisfies Assumptions 2.6 in \cite{Stuart2010}. As a conclusion, the probability measure $\mu^{y}$ given by equation \eqref{hybridR-N} is well defined on $W^{\alpha,\psi}_2(\Omega)$ and it is Lipschitz in the data $y$ with respect to the Hellinger metric as following theorem.
\begin{thm}\label{welldefined}
Let forward operator $G:W^{\alpha,\psi}_{2}(\Omega)\rightarrow \mathbb{R}^m$ satisfies Assumptions \ref{forward operator assume} and $R:W^{\alpha,\psi}_{2}(\Omega)\rightarrow \mathbb{R}^m$ is defined as \eqref{R definition}. For a given $y\in \mathbb{R}^m$,  $\mu^{y}$ is given by equation (\ref{hybridR-N}). Then we have the following:\\
$(i)$ $\mu^{y}$ defined as equation (\ref{hybridR-N}) is well defined on $W^{\alpha,\psi}_2(\Omega)$.\\
$(ii)$ $\mu^{y}$ is Lipschitz in the data $y$ with respect to the Hellinger metric. specifically, if $\mu^{y}$ and $\mu^{y'}$ are two measures corresponding to data $y$ and $y'$ respectively, then for every $r>0$, there exists $C=C(r)>0$ such that for all $y$, $y'\in \mathbb{R}^m$ with $\max\left\{\|y\|_{W^{\alpha,\psi}_{2}(\Omega)},\|y'\|_{W^{\alpha,\psi}_{2}(\Omega)}\right\}<r$, we have $$d_{Hell}(\mu^{y},\mu^{y'}) \leqslant C \|y-y'\|_{\Sigma},$$ where the Hellinger metric with respect to meaasure $\mu$ and $\mu'$ is defined by
$$d_{Hell}(\mu, \mu')=\sqrt{\frac12\int_{\Omega}\big(\sqrt{\frac{d\mu}{d\nu}}-\sqrt{\frac{d\mu'}{d\nu}}\big)^2 d\nu}.$$
\end{thm}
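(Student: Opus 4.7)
The plan is to apply the standard well-posedness machinery for infinite-dimensional Bayesian inverse problems (as in Stuart 2010) to the combined potential $\Psi^{y}(u) := \Phi^{y}(u)+R(u)$. The authors already point out in the paragraph preceding the theorem that Assumption \ref{forward operator assume} on $G$ implies the usual bounds and local Lipschitz properties for $\Phi^{y}$, and Lemma \ref{R property} gives the analogous properties for $R$; since $R$ is non-negative, bounded on balls, locally Lipschitz in $u$ and independent of $y$, the sum $\Psi^{y}$ inherits all four conditions of Assumptions 2.6 of Stuart (2010). My whole proof amounts to tracing this through, but I would write it out concretely rather than just quoting Stuart, because the ambient space here is the new $W^{\alpha,\psi}_{2}(\Omega)$ whose separability (Lemma \ref{separablespace}) is what makes the standard Gaussian theory available.

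For part (i) I would verify that the normalising constant
\begin{equation*}
Z(y)=\int_{W^{\alpha,\psi}_{2}(\Omega)}\exp\bigl(-\Phi^{y}(u)-R(u)\bigr)\,d\mu_{0}(u)
\end{equation*}
lies in $(0,\infty)$. The upper bound is trivial: $\Phi^{y}\geqslant 0$ by definition and $R\geqslant 0$ by Lemma \ref{R property}(i), hence the integrand is bounded by $1$ and $Z(y)\leqslant 1$. For the lower bound I would fix any radius $r>0$ and restrict the integration to the ball $B_{r}=\{u:\|u\|_{W^{\alpha,\psi}_{2}}<r\}$. On $B_{r}$, Assumption \ref{forward operator assume}(i) gives $\|G(u)\|_{\Sigma}\leqslant \exp(\varepsilon r^{2}+M)$, so $\Phi^{y}(u)\leqslant \tfrac{1}{2}(\exp(\varepsilon r^{2}+M)+\|y\|_{\Sigma})^{2}=:C_{1}(r,y)$, while Lemma \ref{R property}(ii) yields $R(u)\leqslant K(r)$. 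Since $\mu_{0}$ is a non-degenerate Gaussian on a separable Hilbert space, Fernique's theorem guarantees $\mu_{0}(B_{r})>0$, and consequently $Z(y)\geqslant \exp(-C_{1}(r,y)-K(r))\mu_{0}(B_{r})>0$.

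For part (ii) I would follow the usual Hellinger computation. Write $f_{y}(u)=\exp(-\tfrac{1}{2}\Phi^{y}(u)-\tfrac{1}{2}R(u))$, so that $\tfrac{d\mu^{y}}{d\mu_{0}}=Z(y)^{-1}f_{y}^{2}$, and split
\begin{equation*}
2\,d_{Hell}(\mu^{y},\mu^{y'})^{2}
= \int\!\Bigl(\tfrac{f_{y}}{\sqrt{Z(y)}}-\tfrac{f_{y'}}{\sqrt{Z(y')}}\Bigr)^{2}d\mu_{0}
\leqslant \tfrac{2}{Z(y)}\!\int (f_{y}-f_{y'})^{2}d\mu_{0}
+2\bigl(Z(y)^{-1/2}-Z(y')^{-1/2}\bigr)^{2}Z(y').
\end{equation*}
Because the $e^{-R/2}$ factor is common to $f_{y}$ and $f_{y'}$ it factors out and is bounded by $1$; using $|e^{-a/2}-e^{-b/2}|\leqslant \tfrac{1}{2}|a-b|$ for $a,b\geqslant 0$ I get $(f_{y}-f_{y'})^{2}\leqslant \tfrac{1}{4}(\Phi^{y}-\Phi^{y'})^{2}$. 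The polarisation identity for $\Phi^{y}(u)=\tfrac{1}{2}\|G(u)-y\|_{\Sigma}^{2}$ gives $|\Phi^{y}(u)-\Phi^{y'}(u)|\leqslant (\|G(u)\|_{\Sigma}+\tfrac{1}{2}\|y\|_{\Sigma}+\tfrac{1}{2}\|y'\|_{\Sigma})\|y-y'\|_{\Sigma}$. The same estimate, integrated once, also controls $|Z(y)-Z(y')|$ and hence $|Z(y)^{-1/2}-Z(y')^{-1/2}|$ after using the lower bound of part (i). Assembling these pieces yields $d_{Hell}(\mu^{y},\mu^{y'})\leqslant C(r)\|y-y'\|_{\Sigma}$ whenever $\|y\|_{\Sigma},\|y'\|_{\Sigma}<r$.

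The main technical obstacle is verifying that $\int \|G(u)\|_{\Sigma}^{2}\,d\mu_{0}(u)<\infty$, which is what makes both the $(f_{y}-f_{y'})^{2}$ integral and the $|Z(y)-Z(y')|$ bound finite. This is where Assumption \ref{forward operator assume}(i) is essential: choosing $\varepsilon$ smaller than the Fernique constant of $\mu_{0}$ on $W^{\alpha,\psi}_{2}(\Omega)$, the bound $\|G(u)\|_{\Sigma}^{2}\leqslant \exp(2\varepsilon\|u\|_{W^{\alpha,\psi}_{2}}^{2}+2M)$ becomes $\mu_{0}$-integrable by Fernique's theorem. Everything else is bookkeeping; the separability of $W^{\alpha,\psi}_{2}(\Omega)$ established in Lemma \ref{separablespace} is what legitimises the use of Gaussian measure theory (in particular Fernique) in this setting.
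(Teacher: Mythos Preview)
Your approach is correct and matches the paper's: the paper does not give a detailed proof at all, stating only that the result is a direct consequence of $\Phi+R$ satisfying Assumptions~2.6 in Stuart~(2010) and omitting the details. You have simply unpacked those details---verifying $Z(y)\in(0,\infty)$ via Fernique and Lemma~\ref{R property}, then carrying out the standard Hellinger splitting---which is exactly the content of the cited reference.
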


The theorem as above is direct consequence of the fact that $\Phi+R$ satisfies the assumption (2.6) in \cite{Stuart2010} and so we omit the proof here.

\begin{rem}
In fact, by using the relation between total variation and Hellinger metrics (see Lemma 20 in \cite{Stuart2015}), from Theorem \ref{welldefined}, we can prove that the expectation of any polynomially bounded function $f:W^{\alpha,\psi}_{2}(\Omega) \to E$ is
continuous in $y$. Here, $E$ is the Cameron-Martin space of the Gaussian measure $\mu_{0}$ (see \cite{Stuart2010,Stuart2015}).
\end{rem}

Next we will study the approximation of posterior measure $\mu^y$ in the similar ideas to \cite{Z.Yao2016}. In particular, we consider the following approximation:
\begin{equation}\label{approximation 1}
\frac{d\mu^{y}_{N_1,N_2}}{d\mu_0} \propto \exp (-\Phi_{N_1}(u)-R_{N_2}(u)),
\end{equation}
where, $\Phi_{N_1}(u)$ is a $N_1$ dimensional approximation of $\Phi(u)$ with $G_{N_1}$ being the $N_1$ dimensional approximation of forward operator $G$ and $R_{N_2}(u)$ is a $N_2$ dimensional approximation of $R(u)$. Then we can establish a approximation of posterior distribution $\mu^y$ to $\mu^y_{N_1,N_2}$ with respect to Hellinger metric in Theorem \ref{approximationtheorem}.

\begin{thm}\label{approximationtheorem}
Assume that $G$ and $G_{N_1}$ satisfy assumption \ref{forward operator assume} (i) with constants uniform in $N_1$, and $R$ is defined by (\ref{R definition}), $X=W^{\alpha,\psi}_{2}(\Omega)$. Assume further for all $\varepsilon>0$, there exist two sequences $\{a_{N_1}(\varepsilon)\}>0$ and  $\{b_{N_2}(\varepsilon)\}>0$ which are both converge to zero, such that $\mu_0(X_{\varepsilon}) \geqslant 1-\varepsilon$, for all $N_1, N_2$,
\begin{equation}
X_{\varepsilon}=\left\{u \in X\big||\Phi(u)-\Phi_{N_1}(u)| \leqslant {a_{N_1}(\varepsilon)},|R(u)-R_{N_2}(u)| \leqslant {b_{N_2}(\varepsilon)}\right\},
\end{equation}
Then we can obtain
\begin{equation}
d_{Hell}(\mu^{y},\mu^{y}_{N_1,N_2}) \to 0,\  as\  N_1,N_2 \to +\infty.
\end{equation}
\end{thm}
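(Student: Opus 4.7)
The plan is to follow the standard argument for convergence of posteriors to finite-dimensional approximations in the infinite-dimensional Bayesian framework, in the spirit of \cite{Stuart2010,Z.Yao2016}. Both $\Phi$ and $R$ are non-negative (the latter by Lemma \ref{R property}(i)), so $e^{-\Phi-R}$ and $e^{-\Phi_{N_1}-R_{N_2}}$ are uniformly bounded by $1$, which makes the subsequent splitting estimates into straightforward dominated-type bounds rather than requiring exponential integrability. First I would introduce the normalization constants $Z=\int_X e^{-\Phi-R}\,d\mu_0$ and $Z_{N_1,N_2}=\int_X e^{-\Phi_{N_1}-R_{N_2}}\,d\mu_0$ and show they admit a common positive lower bound, uniform in $N_1,N_2$. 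Positivity of $Z$ is standard: using Assumption \ref{forward operator assume} and Lemma \ref{R property}(ii), $\Phi+R$ is bounded on any ball $B_r\subset W^{\alpha,\psi}_2(\Omega)$, and $\mu_0(B_r)>0$ for $r$ large enough. For $Z_{N_1,N_2}$, integrating over $X_\varepsilon\cap B_r$ and using $(\Phi_{N_1}+R_{N_2})\leqslant(\Phi+R)+a_{N_1}(\varepsilon)+b_{N_2}(\varepsilon)$ there yields the uniform lower bound, since the error terms vanish as $N_1,N_2\to\infty$ for fixed $\varepsilon$.

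The next step is to estimate $|Z-Z_{N_1,N_2}|$ by splitting the integral over $X_\varepsilon$ and its complement. Using $|e^{-p}-e^{-q}|\leqslant|p-q|$ for $p,q\geqslant 0$, on $X_\varepsilon$ the integrand is bounded by $a_{N_1}(\varepsilon)+b_{N_2}(\varepsilon)$, while on $X\setminus X_\varepsilon$ it is bounded by $2$ on a set of $\mu_0$-measure at most $\varepsilon$, giving $|Z-Z_{N_1,N_2}|\leqslant a_{N_1}(\varepsilon)+b_{N_2}(\varepsilon)+2\varepsilon$, which tends to zero after letting $N_1,N_2\to\infty$ and then $\varepsilon\to 0$. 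For the Hellinger distance itself I would use the familiar two-term decomposition
\begin{align*}
2d_{Hell}^2(\mu^y,\mu^y_{N_1,N_2})
\leqslant{}& \frac{2}{Z}\int_X\Big(e^{-\frac{1}{2}(\Phi+R)}-e^{-\frac{1}{2}(\Phi_{N_1}+R_{N_2})}\Big)^2 d\mu_0 \\
&+ 2\big(Z^{-1/2}-Z_{N_1,N_2}^{-1/2}\big)^2\int_X e^{-(\Phi_{N_1}+R_{N_2})}\,d\mu_0;
\end{align*}
the first integral is controlled by the same $X_\varepsilon$-versus-complement split together with $|e^{-p/2}-e^{-q/2}|\leqslant\tfrac12|p-q|$, and the second term by combining the uniform lower bound on the normalization constants---which, via the mean-value theorem, gives $(Z^{-1/2}-Z_{N_1,N_2}^{-1/2})^2\leqslant C|Z-Z_{N_1,N_2}|^2$---with the convergence $|Z-Z_{N_1,N_2}|\to 0$ already established.

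The main obstacle, as in all arguments of this type, is obtaining the uniform-in-$N_1,N_2$ positive lower bound on $Z_{N_1,N_2}$; the hypothesis $\mu_0(X_\varepsilon)\geqslant 1-\varepsilon$ is exactly what allows the approximation bounds valid on $X_\varepsilon$ to be transferred to the whole space modulo an $O(\varepsilon)$ error, and it is also what ties together the two independent approximation parameters $N_1$ and $N_2$ through the single set $X_\varepsilon$. Beyond this point the estimates are routine and, unlike in the general Stuart framework, do not require exponential moments of $\Phi$ or $R$ thanks to the non-negativity of both potentials.
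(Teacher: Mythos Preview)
Your proposal is correct and follows essentially the same route as the paper's proof: a uniform positive lower bound on the normalization constants, the $X_\varepsilon$-versus-complement splitting for $|Z-Z_{N_1,N_2}|$ using $|e^{-a}-e^{-b}|\leqslant\min\{1,|a-b|\}$, and the standard two-term Hellinger decomposition controlled by the same split plus the mean-value estimate on $Z^{-1/2}-Z_{N_1,N_2}^{-1/2}$. The only cosmetic difference is that the paper separates the complement $X\setminus X_\varepsilon$ at the outset of the Hellinger estimate (yielding a three-term bound) rather than after the two-term decomposition, and obtains the lower bound on $Z_{N_1,N_2}$ by invoking the uniform-in-$N_1$ Assumption~\ref{forward operator assume}(i) directly on a ball rather than via $X_\varepsilon\cap B_r$ as you do; both variants lead to the same conclusion.
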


Noting that $W^{\alpha,\psi}_{2}(\Omega)$ is a separable Hilbert space, hence we can show the finite dimensional approximation of $\mu^y$ to $\mu^y_N$ without any additional assumptions, as following:
\begin{cor}\label{approximationcor}
Let $\{e_k\}_{k=1}^{\infty}$ be a complete orthogonal basis of $W^{\alpha,\psi}_{2}(\Omega)$. For all $N\in \mathbb{N}$, we define
\begin{equation}\label{approximationcor 1}
u_{N}=\sum_{k=1}^N \langle u,e_{k}\rangle e_{k}
\end{equation}
and
\begin{equation}\label{approximationcor 2}
\frac{d\mu^{y}_{N}}{d\mu_0}=\exp \left(-\Phi(u_{N})-R(u_{N})\right).
\end{equation}
Assume $G$ satisfies Assumption \ref{forward operator assume} and $R$ is defined by (\ref{R definition}), then
\begin{equation}\label{approximationcor 3}
d_{Hell}(\mu^{y},\mu^{y}_{N}) \to 0,\ as \ N \to \infty.
\end{equation}
\end{cor}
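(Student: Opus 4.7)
The plan is to reduce the corollary to an application of Theorem \ref{approximationtheorem}, by setting $\Phi_N(u):=\Phi(u_N)$ and $R_N(u):=R(u_N)$; this corresponds to choosing the finite-dimensional forward map $G_N:=G\circ P_N$, where $P_N u:=\sum_{k=1}^N\langle u,e_k\rangle e_k$. First I would check that $G_N$ inherits Assumption \ref{forward operator assume}(i) with constants independent of $N$: since $\{e_k\}$ is a complete orthonormal (orthogonal) basis of $W^{\alpha,\psi}_{2}(\Omega)$, Bessel's inequality yields $\|u_N\|_{W^{\alpha,\psi}_2(\Omega)}\leqslant\|u\|_{W^{\alpha,\psi}_2(\Omega)}$, so the exponential bound on $\|G(u_N)\|_\Sigma$ follows from the one for $G(u)$ with the same pair $(\varepsilon,M)$.

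Next, I need to exhibit, for each $\varepsilon>0$, sequences $a_N(\varepsilon),b_N(\varepsilon)\to 0$ together with a set $X_\varepsilon$ of $\mu_0$-measure at least $1-\varepsilon$ on which $|\Phi-\Phi_N|\leqslant a_N(\varepsilon)$ and $|R-R_N|\leqslant b_N(\varepsilon)$. The key input is that $u_N\to u$ in $W^{\alpha,\psi}_{2}(\Omega)$ for \emph{every} $u$, since $\{e_k\}$ is a complete basis. Given $\varepsilon>0$, Fernique's theorem applied to the Gaussian reference measure $\mu_0$ on the separable Hilbert space $W^{\alpha,\psi}_{2}(\Omega)$ furnishes $r=r(\varepsilon)>0$ with $\mu_0(B_r)\geqslant 1-\varepsilon/2$, where $B_r:=\{u:\|u\|_{W^{\alpha,\psi}_2(\Omega)}\leqslant r\}$. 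Because $\mu_0$ is a finite measure and $u\mapsto\|u-u_N\|_{W^{\alpha,\psi}_2(\Omega)}$ converges to zero everywhere, Egoroff's theorem yields a measurable subset $Y_\varepsilon\subset B_r$ with $\mu_0(B_r\setminus Y_\varepsilon)\leqslant\varepsilon/2$ on which the convergence is uniform; set $\delta_N(\varepsilon):=\sup_{u\in Y_\varepsilon}\|u-u_N\|_{W^{\alpha,\psi}_2(\Omega)}\to 0$.

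I would then transfer this uniform convergence to $\Phi$ and $R$ via Lipschitz bounds. On $Y_\varepsilon$ one has $\|u\|,\|u_N\|\leqslant r$, so Assumption \ref{forward operator assume}(ii) combined with the standard consequence for the potential $\Phi$ (cf.\ \cite{Stuart2010}, Assumption 2.6) gives $|\Phi(u)-\Phi(u_N)|\leqslant K(r)\,\delta_N(\varepsilon)=:a_N(\varepsilon)$, while Lemma \ref{R property}(iii) gives $|R(u)-R(u_N)|\leqslant L(r)\,\delta_N(\varepsilon)=:b_N(\varepsilon)$. Both sequences tend to zero, and $Y_\varepsilon\subset X_\varepsilon$ with $\mu_0(X_\varepsilon)\geqslant 1-\varepsilon$, so all hypotheses of Theorem \ref{approximationtheorem} are fulfilled and the conclusion $d_{Hell}(\mu^y,\mu^y_N)\to 0$ follows.

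The only delicate point is the passage from pointwise to uniform convergence of $\|u-u_N\|_{W^{\alpha,\psi}_2(\Omega)}$ on a large set, which is where Egoroff's theorem is essential; this requires $\mu_0$ to be a finite (in fact, probability) measure on the separable Hilbert space $W^{\alpha,\psi}_2(\Omega)$, both of which have already been established (Gaussianity of $\mu_0$ and Lemma \ref{separablespace}). No additional assumptions on $G$ beyond those already imposed are needed, which is exactly the content of the corollary.
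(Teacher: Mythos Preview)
Your reduction to Theorem~\ref{approximationtheorem} via $\Phi_N=\Phi\circ P_N$, $R_N=R\circ P_N$ is correct, and the verification that $G_N$ inherits Assumption~\ref{forward operator assume}(i) uniformly in $N$ through $\|P_N u\|\leqslant\|u\|$ is exactly what is needed. The argument goes through as written.

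The route differs from the paper's. The paper obtains the large set $X_\varepsilon$ quantitatively: it sets $a_N:=\mathbb{E}\|u-u_N\|_X^2=\sum_{k>N}\mathbb{E}|\langle u,e_k\rangle|^2$, observes that this tends to zero because $\mathcal{C}_0$ is trace class, and then applies Markov's inequality to the random variable $\|u-u_N\|_X$ to produce the set $\{\|u-u_N\|_X\leqslant\sqrt{2a_N/\varepsilon}\}$ of $\mu_0$-measure at least $1-\varepsilon/2$; intersecting with a fixed ball of radius $r_\varepsilon$ and invoking the Lipschitz bounds for $\Phi$ and $R$ (Lemma~\ref{R property}(iii)) yields the conclusion. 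Your approach replaces the moment computation and Markov step by Egoroff's theorem, trading the explicit rate $\sqrt{2a_N/\varepsilon}$ for a non-constructive $\delta_N(\varepsilon)$. This is slightly more general in that it uses only the pointwise convergence $u_N\to u$ and the finiteness of $\mu_0$, without appealing directly to the trace-class property, whereas the paper's argument is more explicit and would in principle yield a convergence rate for $d_{\mathrm{Hell}}$ in terms of the spectral tail of $\mathcal{C}_0$. Invoking Fernique for the ball $B_r$ is harmless but unnecessary: mere tightness of the probability measure $\mu_0$ already gives $\mu_0(B_r)\to 1$.
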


The proofs of Theorem \ref{approximationtheorem} and Corollary \ref{approximationcor} are similar as the ones in \cite{Z.Yao2016}. However, to make this paper self-contained, we give their proofs in the Appendix.

%Theorem \ref{approximationtheorem} and Corollary \ref{approximationcor} are direct following by Theorem 2.3 and Corollary 2.4 in \cite{Z.Yao2016}, hence we omit here. Following the similar ideas in \cite{Stuart2015,Dashti2013,Stuart2010}, we can show that the MAP estimator with respect to GFTG prior is related to the classical Tikhonov regularization, i.e., MAP estimator is equivalent to minimizing
%\begin{equation}\label{MAP}
%I(u):=\Phi(u)+R(u)+\frac{1}{2}\|u\|_E^2.
%\end{equation}
%where, the space $E$ is the Cameron-Martin space of the Gaussian measure $\mu_{0}$, with the norm:
%\[\|\cdot\|_E=\|\mathcal{C}^{\frac{1}{2}}_{0}\cdot\|_{W^{\alpha,\psi}_{2}(\Omega)}.\]

\begin{rem}
Obviously, according to the definition of fractional derivatives $D_{[a,b]}^{\alpha,\psi}$, if we let $\psi=x$ and $\alpha=1$ in (\ref{R definition}), the GFTG prior is reduced to TG prior in \cite{Z.Yao2016}. Furthermore, Theorem \ref{welldefined}, Theorem \ref{approximationtheorem} and Corollary \ref{approximationcor} are also reduced to the corresponding ones in \cite{Z.Yao2016}.
\end{rem}

\section{pCN algorithm}\label{sec4}
This section we will discuss the numerical implementation method of the Bayesian inference with respect to GFTG priors. Markov chain Monte Carlo (MCMC) methods are widely used methods in the Bayesian inference which are evaluated using the samples drawn from the posterior distribution $\mu^y$ given by equation \eqref{hybridR-N}. In the algorithm implementation in this work, we use the preprocessing Crank-Nicolson (pCN) algorithm developed in \cite{Stuart2015,Stuart2010,Stuart2013} because of its dimension-independent properties.

The following is a brief introduction to pCN algorithms: Give the propose by
\begin{equation}\label{propose}
v=\sqrt{1-\beta^2}u+\beta w,
\end{equation}
where $v$ is the next propose situation, $u$ is the current situation, $\beta$ is the parameter controlling the degree of locality, and $w\thicksim \mathcal{N}(0,\mathcal{C}_0)$.

The associated acceptance probability is given by
\begin{equation}\label{acceptance}
a(u,v)=\min\{1,\exp[\Phi(u)+R(u)-\Phi(v)-R(v)]\}.
\end{equation}
The Algorithm \ref{algorithm} is the specific description of the pCN algorithm process.
\begin{algorithm}[ht]
\caption{The preconditioned Crank-Nicolson (pCN) Algorithm}
\label{algorithm}
\begin{algorithmic}[1]
\State {Initialize $u^{(0)}\in W^{\alpha,\psi}_{2}(\Omega)$;}
\For{$i=0$ to $n$}
	\State {Propose $v^{(i)}=\sqrt{1-\beta^2}u^{(i)}+\beta w^{(i)}, w^{(i)}\thicksim \mu_{0}$;}
	\State{Draw $\theta \thicksim U[0,1]$}
	\If{$\theta\leqslant a(u^{(i)},v^{(i)})$}
		\State {$u^{(i+1)}=v^{(i)}$;}
	\Else
		\State {$u^{(i+1)}=u^{(i)}$;}
	\EndIf
\EndFor
\end{algorithmic}
\end{algorithm}

\section{Numerical Examples}\label{sec5}
In this section, we present several numerical examples to illustrate the salient and promising features
of the GFTG prior.

We mainly discuss three types of inverse problems, two of which are linear problem, deconvolution problem and inverse source identification problems, and the other is nonlinear problem, the parameter identification by interior measurements. Besides, we also consider three types of fractional total variational regularization term based on three types of fractional derivatives: the first choose $\psi=x$, called classical Riemann-Liouville fractional derivative, the second choose $\psi=\ln x$, called Hadamard fractional derivative, the third choose $\psi=e^x$. In our article, we focus on $0<\alpha< 1$ and $1<\alpha< 2$, i.e., $n=1$ or $n=2$ in the definitions of fractional derivatives. Furthermore, we only consider four numerical results for the order $\alpha=0.1,\  0.9,\  1.1$, and $1.9$, comparing them to the results for TG prior. Finally, the regularization parameter $\lambda$ is manually chosen so that we obtain the optimal inversion results. Besides, averaging the estimate results of many times running, may reduce the erroneous influence which randomness brings.

\subsection{A Deconvolution Problem}\label{sec5.1}
The first problem is a simple deconvolution problem in image processing problem as \cite{Vogel2002}. Consider the Fredholm first kind integral equation of convolution type:
\begin{equation}\label{convelution}
g(x)=\int_{\Omega}k(x-x')f(x')dx'\overset{def}=(\mathcal{K}f)(x), \ x\in\Omega,
\end{equation}
here $g$ represents the blurred image, $f$ represents source term. The kernel $k$ is given by following Gaussian kernel,
\begin{equation}
k(x)=C \exp(-x^2/2r^2),
\end{equation}
where $C$ and $r$ are positive parameters with $C=1/(r\sqrt{2\pi})$, which can control the level of polish for the source term.

The associated inverse problem is as following: Given the kernel $k$ and the blurred image $g$, determine the source $f$. In this example, given $\Omega=[1,2]$, the source $f$ is defined by
$$ f(x)=\left\{
\begin{array}{ll}
-16(x-1)(x-1.5),  &{1\leqslant x\leqslant1.5;}\\
0.5,  &{1.7\leqslant x \leqslant 1.9;}\\
0, &{otherwise.}
\end{array} \right. $$
We can simply  discretize equation (\ref{convelution}) to obtain a discrete linear system $K\mathbf{f}=\mathbf{d}$ by using left rectangle formula on a uniform grid with $N=100$, and the $K$ has entries
$$[K]_{ij}=h C\exp\left(-\frac{((i-j)h)^2}{2r^2}\right),1\leq i,j\leq N,$$
here, $h=1/N$, grid point $x_i=a+ih$ and $r=0.03$. The noisy measured data $y$ are generated by
\begin{equation}\label{data}
y=Kf+\eta,
\end{equation}
where $f$ is the true source, and $\eta$ is the Gaussian random vector with
a zero mean and 0.01 standard deviation.

Specifically, we choose the reference Gaussian prior to be $\mathcal{N}(0,\mathcal{C}_{0})$ and the covariance operator $\mathcal{C}_{0}$ is given by
\begin{equation}\label{cov}
c_{0}(x_1,x_2)=\gamma\exp\left[-\frac{1}{2}\left(\frac{x_1-x_2}{d}\right)^2\right],
\end{equation}
where $\gamma=0.01$ and $d=0.02$ in the subsequent  numerical experiment.
For the GFTG prior, we should use a finite dimensional formula and assume the prior density is
\begin{equation}\label{FOTV density}
p(f_N)\propto\exp(-\lambda\|f_N\|_{FTV}).
\end{equation}

For the fractional order derivative approximations, when we choose $\psi=x$ in equation \eqref{fractional derivative}, namely Riemann-Liouville fractional derivative, the approximate approach is similar to \cite{Q.Yang2010,Oldham1974}. For $0<\alpha< 1$ and $n=1$, the Riemann-Liouville fractional derivative is approximated by the standard Gr\"{u}nwald formula as follows:
$$D_{[1,x]}^{\alpha}f(x_l)\thickapprox\frac{1}{h^{\alpha}}\sum_{j=0}^l \omega_jf_{l-j},\ \  D_{[x,2]}^{\alpha}f(x_l)\thickapprox\frac{1}{h^{\alpha}}\sum_{j=0}^{N-l} \omega_jf_{l+j},$$
then,
\begin{equation}\label{priordiscret1}
D_{[1,2]}^{\alpha}u(x_l)\thickapprox\frac{1}{2h^{\alpha}}\left(\sum_{j=0}^l \omega_ju_{l-j}-\sum_{j=0}^{N-l} \omega_ju_{l+j}\right).
\end{equation}
When $1<\alpha< 2$ and $n=2$, the Riemann-Liouville fractional derivative is approximated by the shift Gr\"{u}nwald formula as follows:
$$D_{[1,x]}^{\alpha}f(x_l)\thickapprox\frac{1}{h^{\alpha}}\sum_{j=0}^{l+1} \omega_jf_{l-j+1},\ \ D_{[x,2]}^{\alpha}f(x_l)\thickapprox\frac{1}{h^{\alpha}}\sum_{j=0}^{N-l+1} \omega_jf_{l+j-1},$$
therefore,
\begin{equation}\label{priordiscret2}
D_{[1,2]}^{\alpha}f(x_l)\thickapprox\frac{1}{2h^{\alpha}}\left(\sum_{j=0}^{l+1} \omega_jf_{l-j+1}+\sum_{j=0}^{N-l+1} \omega_jf_{l+j-1}\right).
\end{equation}
Where $l=1,\ 2,\ \dots,\ N-1$, $\omega_{0}=1$, $\omega_{j}=(-1)^j\frac{\alpha(\alpha-1)\dots(\alpha-j+1)}{j!}$, for $j=1,\ 2,\ \dots,\ N$. In fact, the coefficient has recursion formula as following:
\[\omega_{0}=1, \ \omega_{j}=\big(1-\frac{1+\alpha}{j}\big)\omega_{j-1}\ \mbox{for} \ j>0.\]

When we choose $\psi=\ln x$ in equation \eqref{fractional derivative}, namely Hadamard fractional derivative or $\psi=e^x$,
and the approximation formulae are similar as above classical Riemann-Liouvlle fractional derivative from the relationship as \cite{Mohamed2017},
$$D_{[\psi(a),s]}^{\alpha}(f\circ\psi^{-1})(s)=D_{[a,x]}^{\alpha,\psi}f(x)=D_{[a,x]}^{\alpha,\psi}f(\psi^{-1}(s)),\ as\  x=\psi^{-1}(s),$$
and the right fractional derivative has the similar relationship as the left:
$$D_{[s,\psi(b)]}^{\alpha}(f\circ\psi^{-1})(s)=D_{[x,b]}^{\alpha,\psi}f(x)=D_{[x,b]}^{\alpha,\psi}f(\psi^{-1}(s)),\ as\  x=\psi^{-1}(s),$$
then,
\begin{equation}\label{relationRL}
D_{[\psi(a),\psi(b)]}^{\alpha}(f\circ\psi^{-1})(s)=D_{[a,b]}^{\alpha,\psi}f(x)=D_{[a,b]}^{\alpha,\psi}f(\psi^{-1}(s)).
\end{equation}

Note that $\psi = \ln x$ or $\psi =e^x$, we also use an $N=100$ uniform grid discretization for $s$ on intervals $ [\psi(a), \psi(b)] $, which is consistent with the R-L fractional derivatives above. Moreover, we fix $\beta = 0.03$ and extract $ 2 \times10 ^ 5 $ samples, with first $ 1 \times10 ^ 5 $ discarded as burn-in, from all posterior measure in the iterative process of pCN algorithm. Figure \ref{figure1} shows the numerical results of GFTG priors for different fractional types and different values of $\alpha$ as well as the results of the TG prior. We plot the Riemann-Liouville GFTG prior and TG prior results in the Figure \ref{fig1:subfig1} with various of $\lambda$. For the GFTG prior, when $\alpha=0.1,\ 0.9,\ 1.1$ and $1.9$, taking $\lambda=0.01,\ 2,\ 0.01$ and $0.06$ respectively, and for the TG prior, we choose $\lambda=2$. In the Figure \ref{fig1:subfig2}, the results of Hadamard GFTG prior and TG prior are presented, when $\alpha=0.1,\ 0.9,\ 1.1$ and $1.9$, we choose $\lambda=0.5,\ 0.4,\ 0.01$ and $0.0005$ respectively, and for the TG prior, we choose $\lambda=2$. The  reconstruction result of taking $\psi=e^x$ is shown in Figure \ref {fig1:subfig3}, when $\alpha=0.1,\ 0.9,\ 1.1$ and $1.9$,  fixing $\lambda=9,\ 8,\ 0.09$ and $0.05$ respectively, and also choose $ \lambda = 10 $ for the TG prior. The parameters in the Figure \ref{fig1:subfig4} are the same as in the Figure \ref{fig1:subfig1}, (b) and (c) with $\alpha=0.9$ and $f_{TG}$ in Figure \ref{fig1:subfig1}. We can see that the different types of GFTG prior with various $\alpha$ and TG prior are well approximations of the true solution, which also indicates that the GFTG and TG prior is valid in a deconvolution problem.

\begin{figure}[htbp]
\centering
\subfigure[]{
\label{fig1:subfig1}
\includegraphics[width=7.5cm]{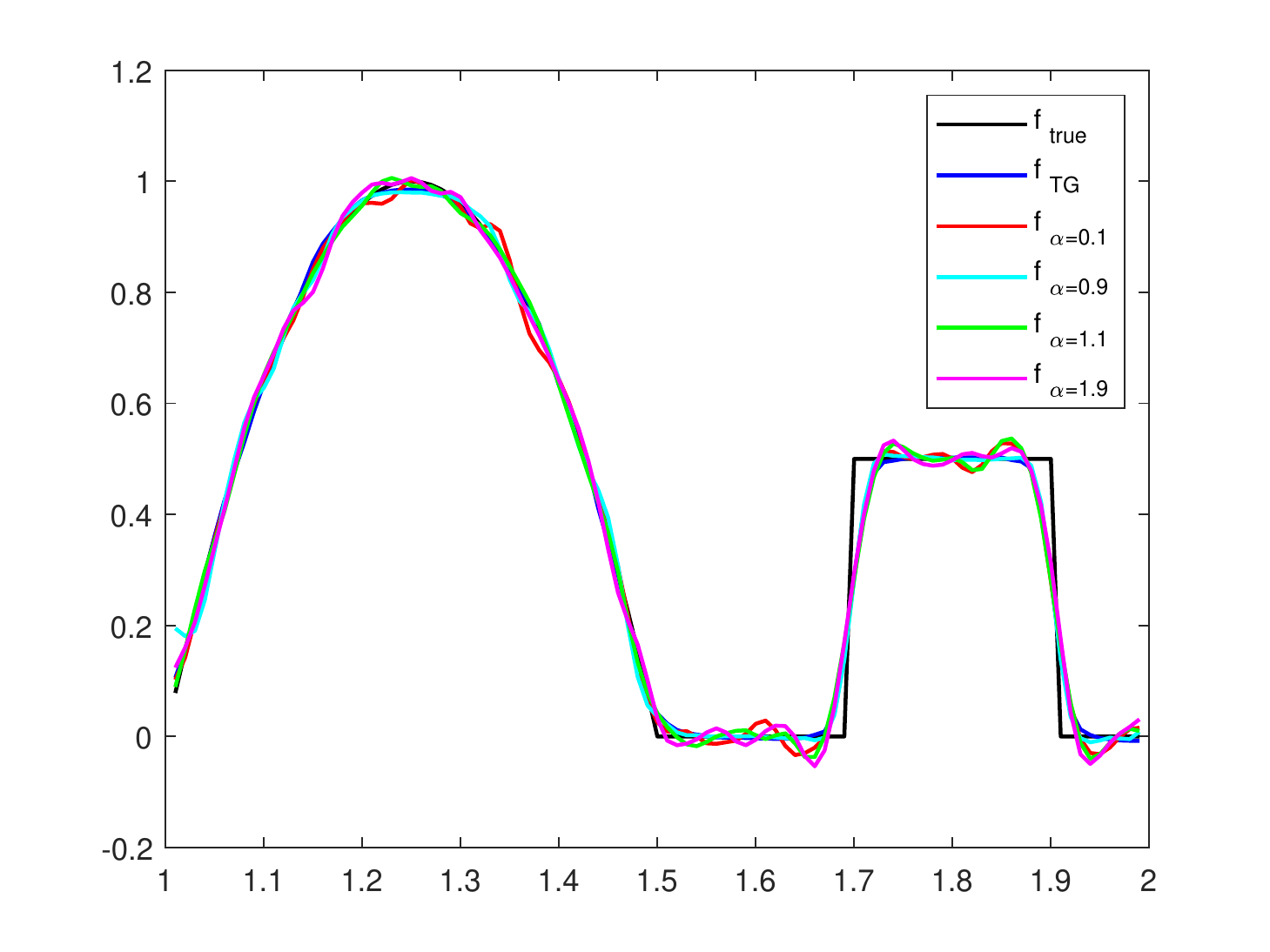}
}
\quad
\subfigure[]{
\label{fig1:subfig2}
\includegraphics[width=7.5cm]{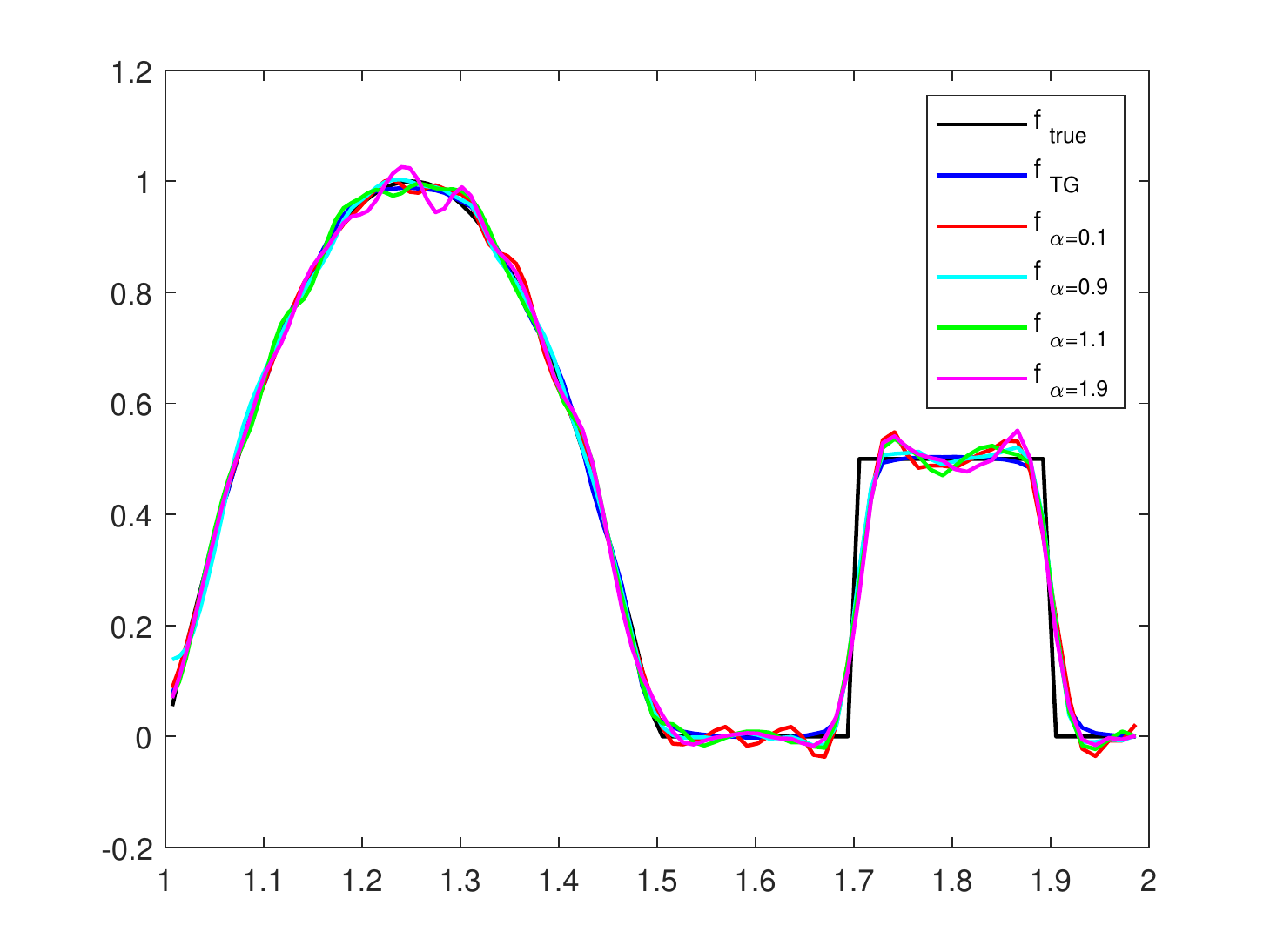}
}
\quad
\subfigure[]{
\label{fig1:subfig3}
\includegraphics[width=7.5cm]{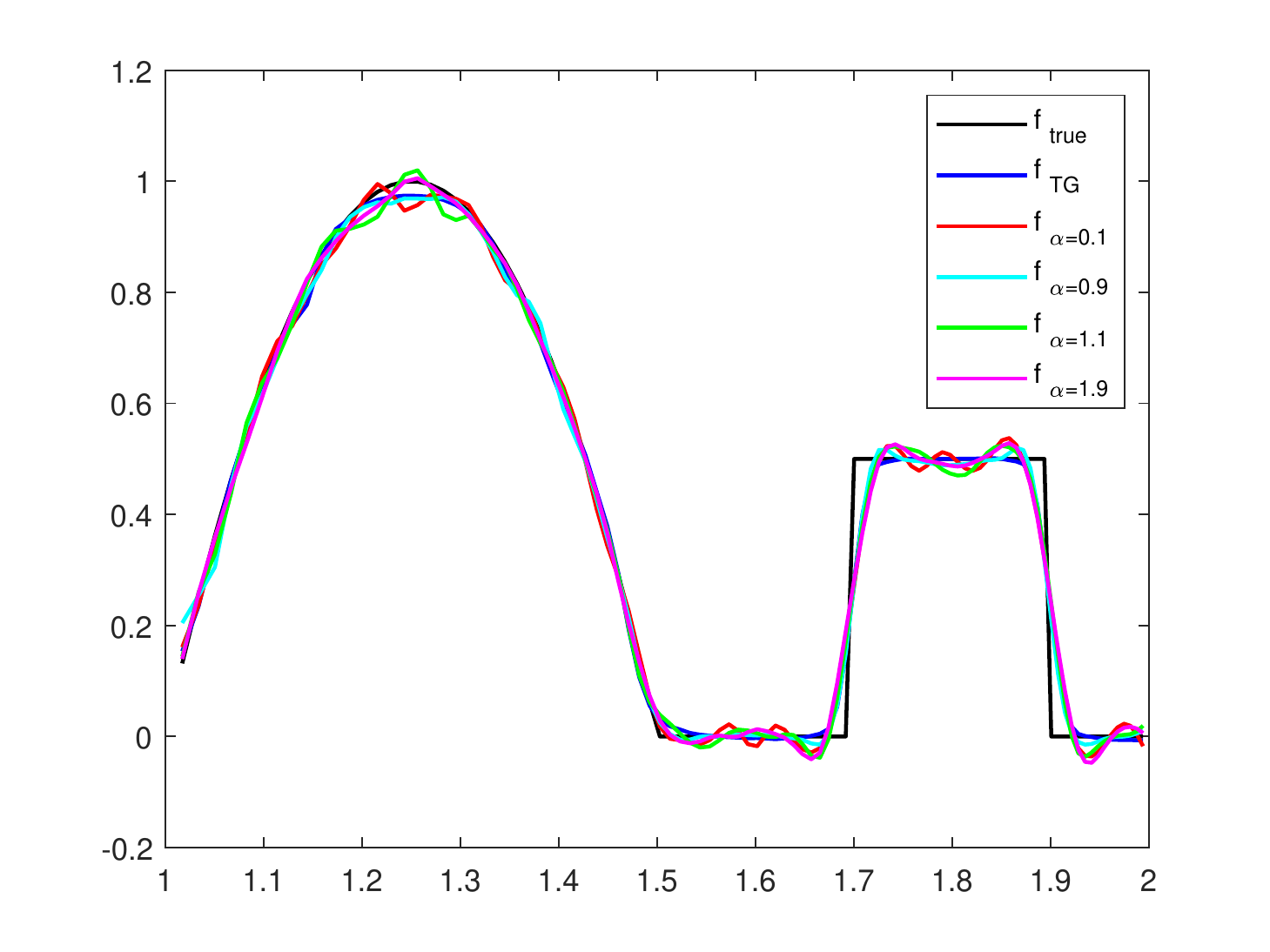}
}
\quad
\subfigure[]{
\label{fig1:subfig4}
\includegraphics[width=7.5cm]{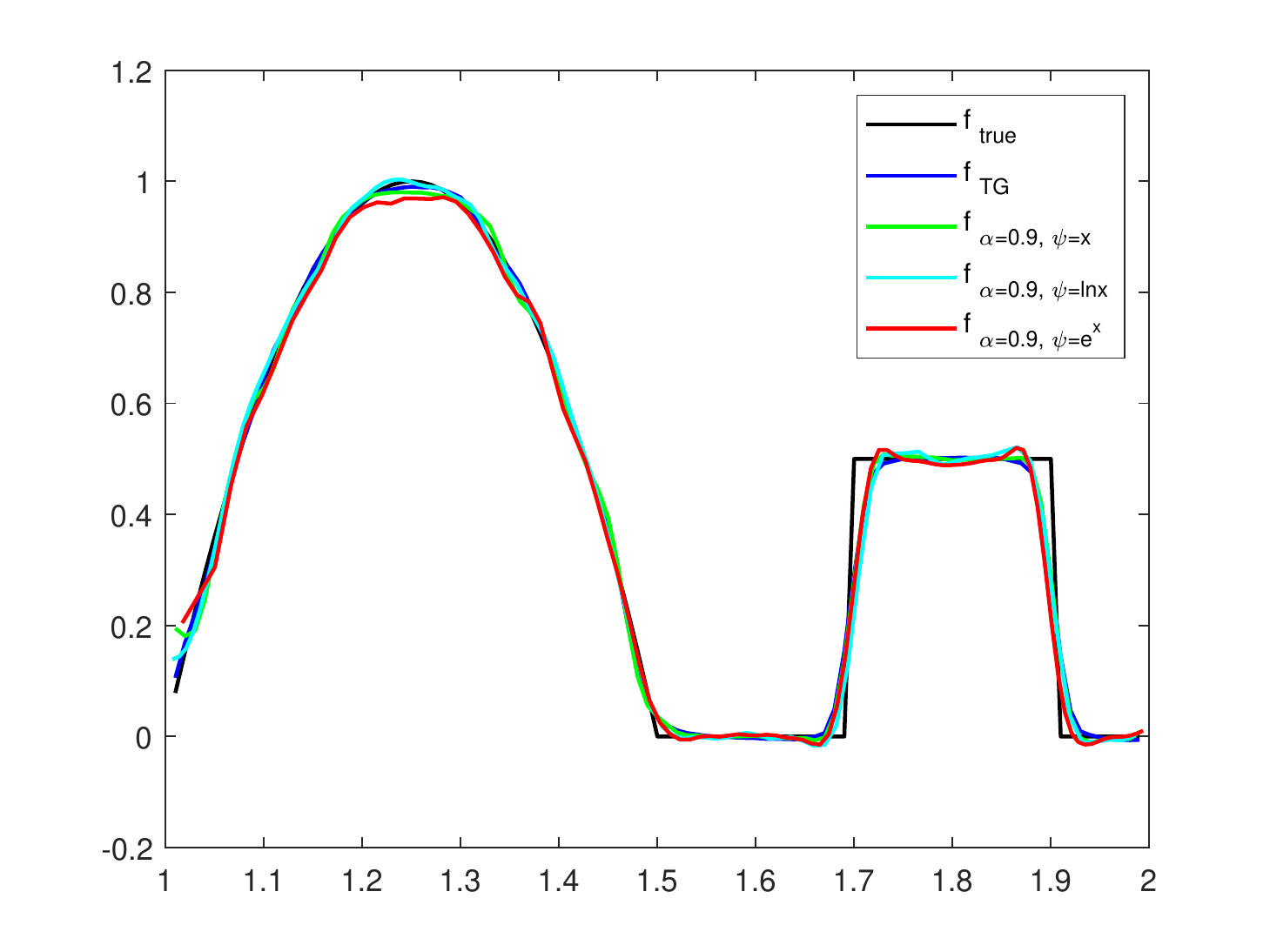}
}
\caption{The true solution and inversion solution with different $\psi$ by GFTG prior and TG prior. The legend $f_{TG}$ represents TG prior inversion results, legend $f_{true}$ represents the true solution, and the others represents GFTG prior inversion results with different fractional oreder $\alpha$. (a): $\psi=x$, (b):$\psi=\ln x$, and (c): $\psi=e^x$. (d): Different GFTG priors with $\alpha=0.9$ compared with TG prior.}
\end{figure}\label{figure1}

\begin{figure}[htbp]
  \centering
  % Requires \usepackage{graphicx}
  \includegraphics[width=13cm]{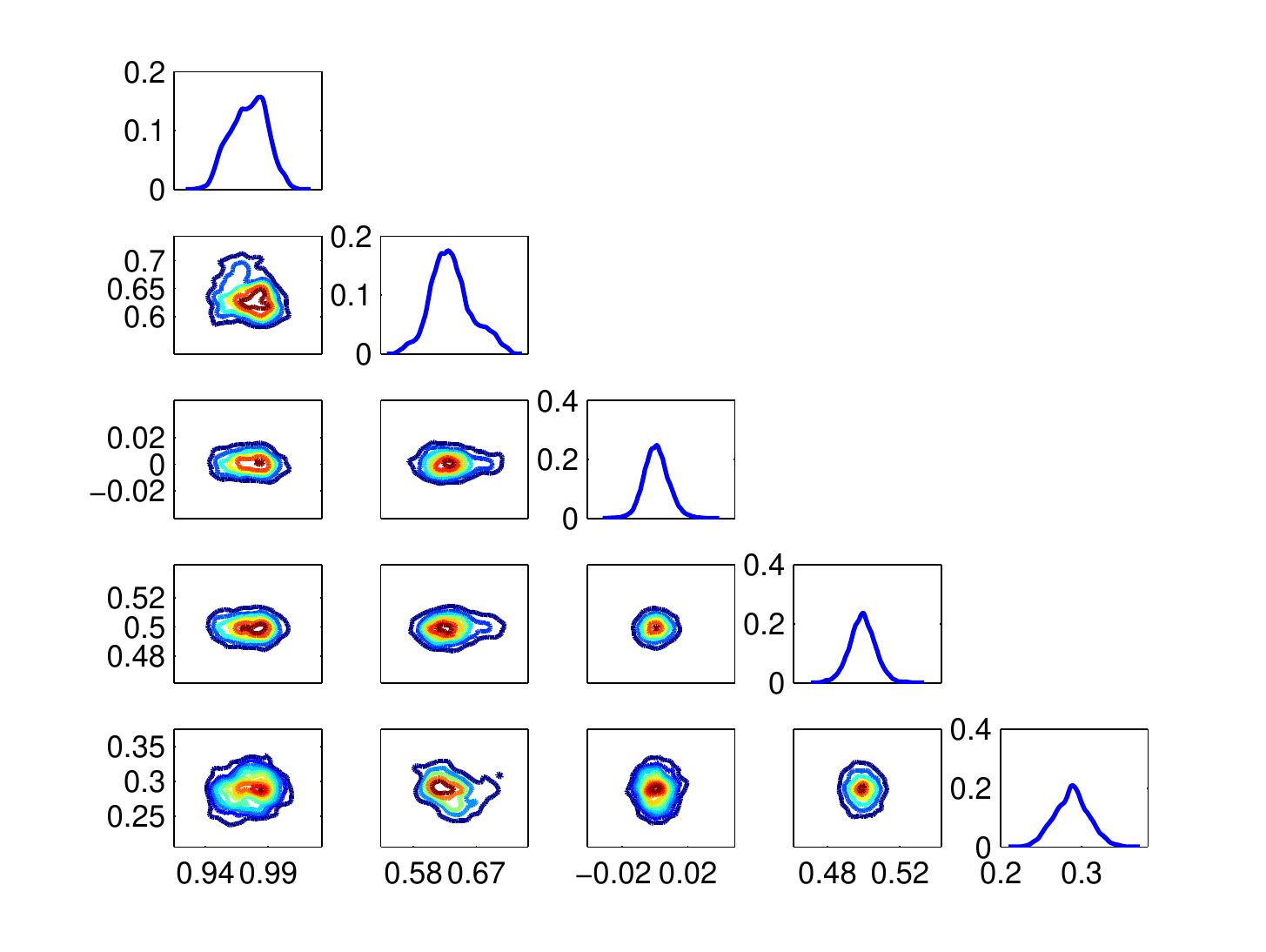}
  \begin{picture}(0,0)
   \put(-368,234){$f_{20}$}
   \put(-368,190){$f_{40}$}
   \put(-368,140){$f_{60}$}
   \put(-368,92){$f_{80}$}
   \put(-368,45){$f_{90}$}
   \put(-305,5){$f_{20}$}
   \put(-248,5){$f_{40}$}
   \put(-190,5){$f_{60}$}
   \put(-128,5){$f_{80}$}
   \put(-70,5){$f_{90}$}
  \end{picture}
  \caption{One- and two-dimensional posterior marginals of $[f_{20};f_{40};f_{60};f_{80};f_{90}]$ for $\psi=x, \alpha=0.9$.}\label{marginals1}
\end{figure}

\begin{figure}[htbp]
  \center
  \includegraphics[width=8cm]{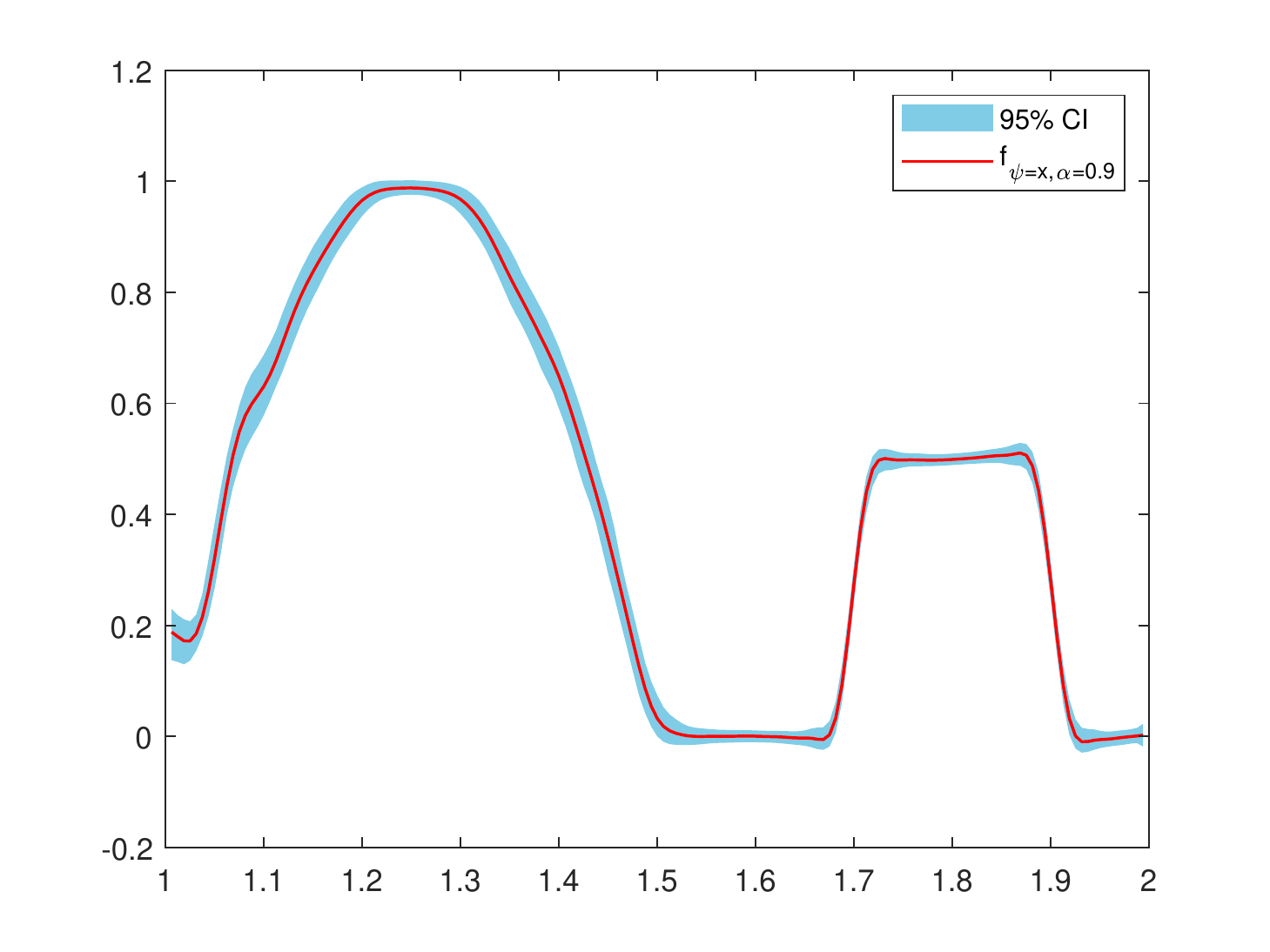}
  \caption{$95\%$ confidence interval (CI) for $\psi=x, \alpha=0.9$.}
  \label{figure2}
  \end{figure}

It is worth noting that, from the Figure \ref{fig1:subfig4}, the results of GFTG prior with $\psi=x$ when $\alpha=0.9$ are basically the same as the results of TG prior, while the results of $\alpha=1.1$ is different.
From the definitions of classical RL and Riesz-RL fractional derivatives, we know that when $\alpha=n\in \mathbb{N}$, the classical left and right RL fractional derivatives are consistent with integer $n$ order derivative $f^{(n)}(x)$ and $-f^{(n)}(x)$ as \cite{Kilbas2006,Samko1993}, and when $0<\alpha<1$ as $\alpha\to 1_{-}$ and $n=1$, the Riesz-RL fractional derivative is consistent with $f^{'}(x)$, while when $1<\alpha<2$ as $\alpha\to 1_{+}$ and $n=2$, the Riesz-RL fractional derivative is not consistent with $f^{'}(x)$.
Meanwhile, when $\alpha=1$, the coefficients $\omega_{j}$ in numerical discrete scheme \eqref{priordiscret1} and \eqref{priordiscret2} satisfy $\omega_{0}=1,\ \omega_{1}=-1$, and others are zero. Then the scheme \eqref{priordiscret1} degenerates into the central difference for $f^{'}(x)$, while the scheme \eqref{priordiscret2} is not.
When $\alpha=0.1$ and $1.9$ are far from $1$, the results are more smooth compared with the result of TG prior.
When $\psi=\ln x$ or $\psi=e^x$ with different $\alpha$, the results have the similar characteristics to that of $\psi=x$ because of the transform formula \eqref{relationRL} which we use in numerical implementation, so that they have the same property as GFTG prior with $\psi=x$. Thus, for different selection of $\psi$, we can get similar estimations using the Bayesian method.

Figure \ref{marginals1} shows the one- and two-dimensional posterior marginals of $\tilde{f}=[f_{20};f_{40};f_{60};f_{80};f_{90}]$ for $\psi=x, \alpha=0.9$. For each of the five components of $\tilde{f}$, representing the posterior results for $x=1.2, x=1.4,x=1.6, x=1.8, x=1.9$, respectively. It is easy to see that the high probability of posterior of $f_{20}, f_{40}, f_{60}, f_{80}$ and $f_{90}$ center around $0.96, 0.66, 0, 0.5$ and $0.3$, respectively, and basically consistent with the trues. The deconvolution problem is linear, but the posterior distributions show the non-Gaussian features because of the GFTG prior. From the shape of their two-dimensional marginals, it shows that the modes appear different no-correlation at different part of solution. When $x=1.6, x=1.8, x=1.9$, the modes show more obvious no-correlation, perhaps because the solution is constants there. However, when $x=1.2, x=1.4$ (nonconstant part), the modes show some weak correlation. In Figure \ref{figure2}, we draw the $95\%$ confidence interval (CI) for the unknown function when $\psi=x$ and $\alpha=0.9$. This plot demonstrate that the Bayesian method can quantify its associated uncertainty, which is the difference between the Bayesian method and the deterministic methods for solving inverse problems.

In order to test the dimension-independent, we give some errors of resulting posterior means with different $N = 80, 160$ and $320$ in Table \ref{tab1}, where the error of the numerical experiments is measured using the root-mean-squared deviation defined by
\[d(x,y)=\sqrt{\frac{1}{N}\sum_{i=1}^{N}|x_i-y_i|^2}.\]
Where parameters for TG prior are same as the Figure \ref{fig1:subfig1}.
From the Table \ref{tab1}, we can see that the errors for the three different $N$ look almost identical, suggesting that the results with the TG prior and GFTG prior are independent of discretization dimensionality.

\renewcommand{\arraystretch}{1.5}
\begin{table}[htbp]
  \centering
  \fontsize{12}{12}\selectfont
  \caption{The errors of TG prior and $\alpha=0.9$ with different $\psi$ for various N.}
  \begin{threeparttable}
    \begin{tabular}{ccccc}
    \toprule
    \multirow{2}{*}{$N$}&
   \multicolumn{3}{c}{$\alpha=0.9$} &\multirow{2}{*}{TG}\cr
    \cmidrule(lr){2-4}
    &$\psi=x$&$\psi=\ln x$&$\psi=e^{x}$\cr
    \midrule
    80 &0.0423&0.0383&0.0468&0.0410\cr
    160&0.0419&0.0378&0.0463&0.0397\cr
    320&0.0419&0.0373&0.0464&0.0409\cr
    \bottomrule
    \end{tabular}
    \end{threeparttable}
    \label{tab1}
\end{table}

This is only a simple linear inverse convolution problem, the advantages of different priors are not so obvious, in practice, the inverse problem should be more complex and ill-posed. Thus, we will use the GFTG prior to deal with more difficult problems in the next example.

\subsection{A Inverse source identification problem}\label{sec5.2}
In this subsection, we consider the inverse source identification problem and given the following initial-boundary value problem for the homogeneous heat equation.

\begin{equation}\label{problem2}
\begin{split}
\begin{cases}
&\frac{\partial u(x,t)}{\partial t}=\Delta u(x,t)+f(x), \ (x,t)\in\Omega\times(0,T],\\
&u(x,t)=0, \ (x,t)\in\partial\Omega\times(0,T],\\
&u(x,0)=\varphi(x), \ x\in\Omega,\\
\end{cases}
\end{split}
\end{equation}
The corresponding inverse problem is to determine the heat source $f$ from the final temperature measurement $u(x,T)|_{x\in\Omega}$.

We first solve the direct problem through the finite difference method (FDM), and discretize the problem (\ref{problem2}) on a uniform grid  using the Crank-Nicolson method as in \cite{L.Yan2010}, i.e.,
$$\frac{u(x,t+\Delta t)-u(x,t)}{\Delta t}=\theta (\Delta u|_{t+\Delta t})+(1-\theta)(\Delta u|_t)+f,$$
where $0\leqslant\theta\leqslant 1$ and $\Delta t$ is the time step size, note that $\Delta u$ is discretized by the second-order central difference for space $x$. Applying the same ideas and notations with \cite{L.Yan2010}, the inverse problem of (\ref{problem2}) has been reduced to solving the following matrix equation:
\begin{equation}\label{probleminverse}
Af=b.
\end{equation}
The observed data $y$ are subject to noise, thus we have

$$y=Af+\eta,$$
where $\eta$ is the Gaussian observed noise with $\eta\thicksim \mathcal{N}(0, 0.001^2)$.

For the GFTG prior, when we choose $\psi=x$, the discretization of the FTV prior with fractional derivative is the same as equation (\ref{priordiscret1}) and (\ref{priordiscret2}) with $0<\alpha<1$ and $1<\alpha<2$. When $\psi=\ln x$ or $e^x$, we use the relationship of (\ref{relationRL}) with $x=\psi^{-1}(s)$ to implement the discretization of the GFTG priors. Assume $x=\psi^{-1}(s)$ with $x\in[a,b]$, then $s\in [\psi(a),\psi(b)]$. The problem (\ref{problem2}) can rewrite as
\begin{equation}\label{rewrite1}
\begin{split}
&\frac{\partial u(\psi^{-1}(s),t)}{\partial t}=\left(\frac{\partial x}{\partial s}\right)^{-2}\Delta u(\psi^{-1}(s),t)-\left(\frac{\partial x}{\partial s}\right)^{-3}\frac{\partial^2 x}{\partial s^2}\frac{\partial u(\psi^{-1}(s),t)}{\partial s}\\
&\quad \quad\quad\quad\quad\quad\quad
  +f(\psi^{-1}(s)), \  (\psi^{-1}(s),t)\in\Omega\times(0,T],\\
&u(\psi^{-1}(s),0)=\varphi(\psi^{-1}(s)), \ \psi^{-1}(s)\in\Omega,\\
&u(\psi^{-1}(s),t)=0, \ (\psi^{-1}(s),t)\in\partial\Omega\times(0,T],\\
%&u(\psi^{-1}(s),T)=g(\psi^{-1}(s)),\  \psi^{-1}(s)\in\Omega.
\end{split}
\end{equation}
We use the similar ideas in \cite{L.Yan2010} as following
\begin{align*}
\frac{u(\psi^{-1}(s),t+\Delta t)-u(\psi^{-1}(s),t)}{\Delta t}=&\left(\frac{\partial x}{\partial s}\right)^{-2}\theta_0 (\Delta u|_{t+\Delta t})+(1-\theta_0)(\Delta u|_t)\\
&-\left(\frac{\partial x}{\partial s}\right)^{-3}\frac{\partial^2 x}{\partial s^2}[\theta_1 (D u|_{t+\Delta t})+(1-\theta_1)(D u|_t)]+f,
\end{align*}
where $0\leqslant\theta_0, \theta_1\leqslant1$, $\Delta t$ is the equally stepsize of time as the case $\psi=x$.
$\Delta u$ and $Du$ are both discretized by second order cental difference scheme.

In this example, we choose $\Omega=[1,3]$ and $T=1$. The initial temperature is given by
$$u(x,0)=\sin(\pi x)=\varphi(x), \ \ x\in[1,3],$$
and the heat source defined by
$$ f(x)=\left\{
\begin{array}{ll}
5,  &{1.15\leqslant x\leqslant1.35;}\\
5[\sin (6\pi x+\frac{\pi}{2})+1],  &{1.5\leqslant x \leqslant 2.5;}\\
5,  &{2.65\leqslant x\leqslant2.85;}\\
0,  &{otherwise.}
\end{array} \right. $$

\begin{figure}[ht]
\centering
\subfigure[]{
\label{fig3:subfig1}
\includegraphics[width=7.5cm]{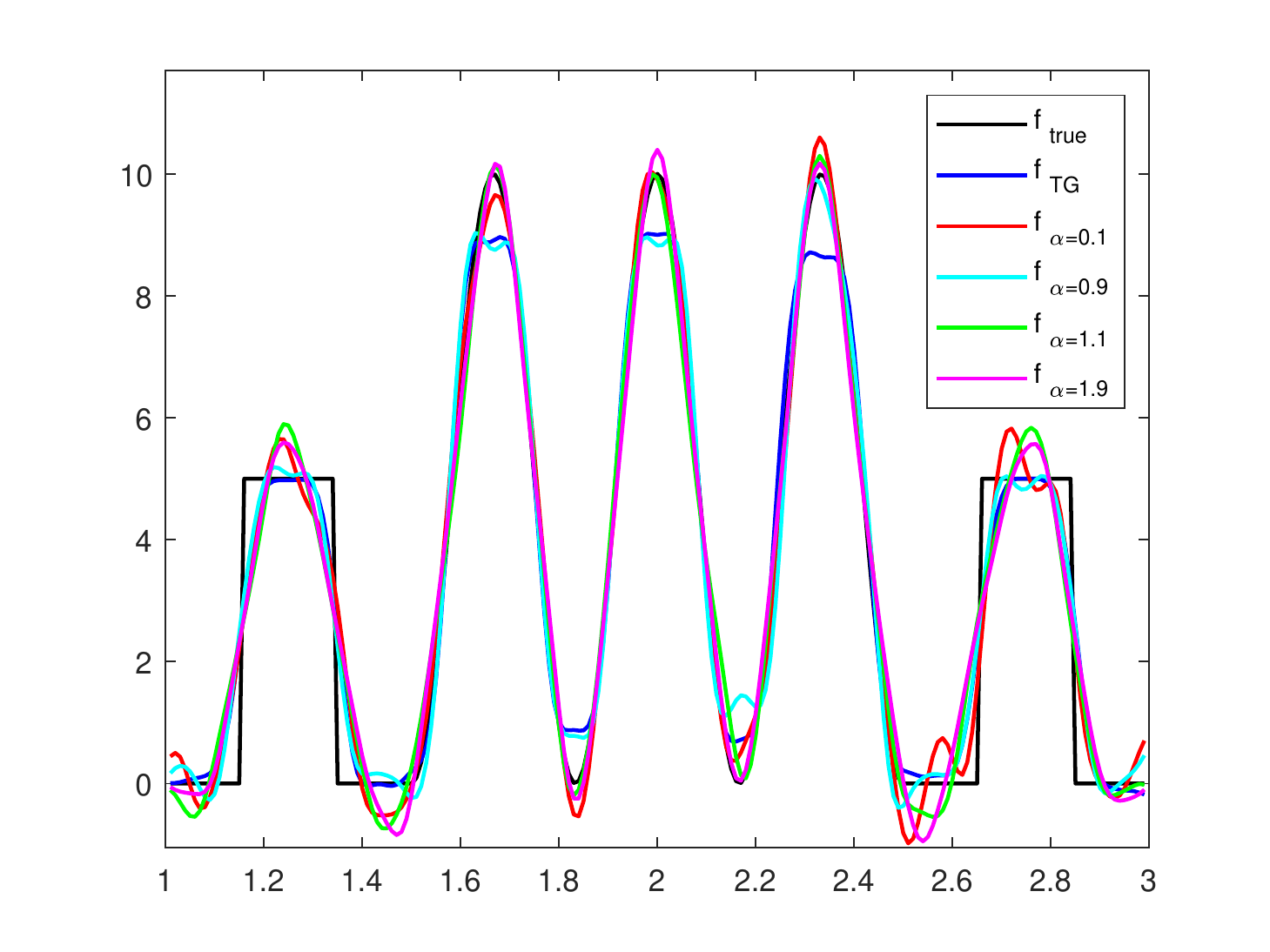}
}
\quad
\subfigure[]{
\label{fig3:subfig2}
\includegraphics[width=7.5cm]{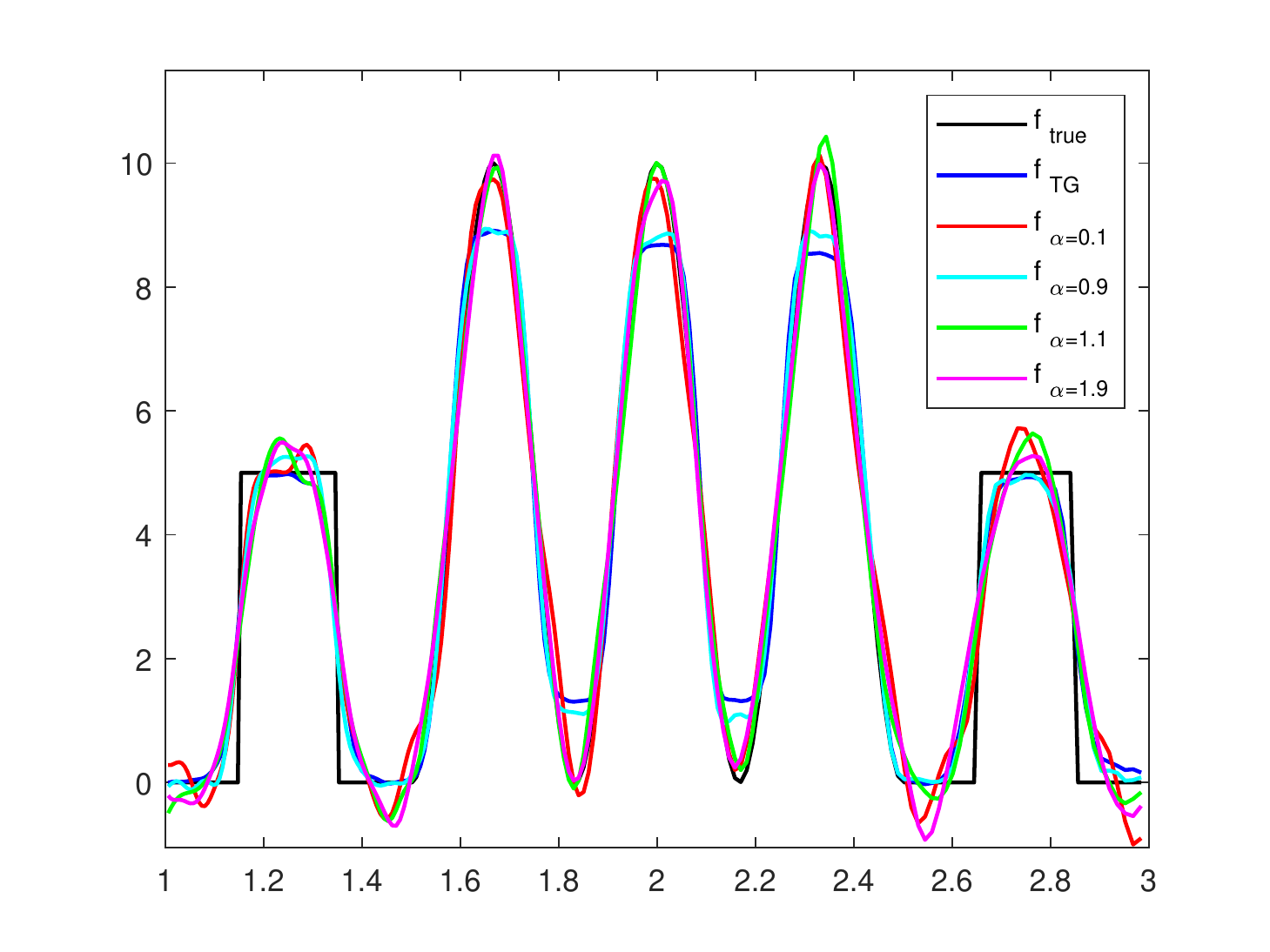}
}
\quad
\subfigure[]{
\label{fig3:subfig3}
\includegraphics[width=7.5cm]{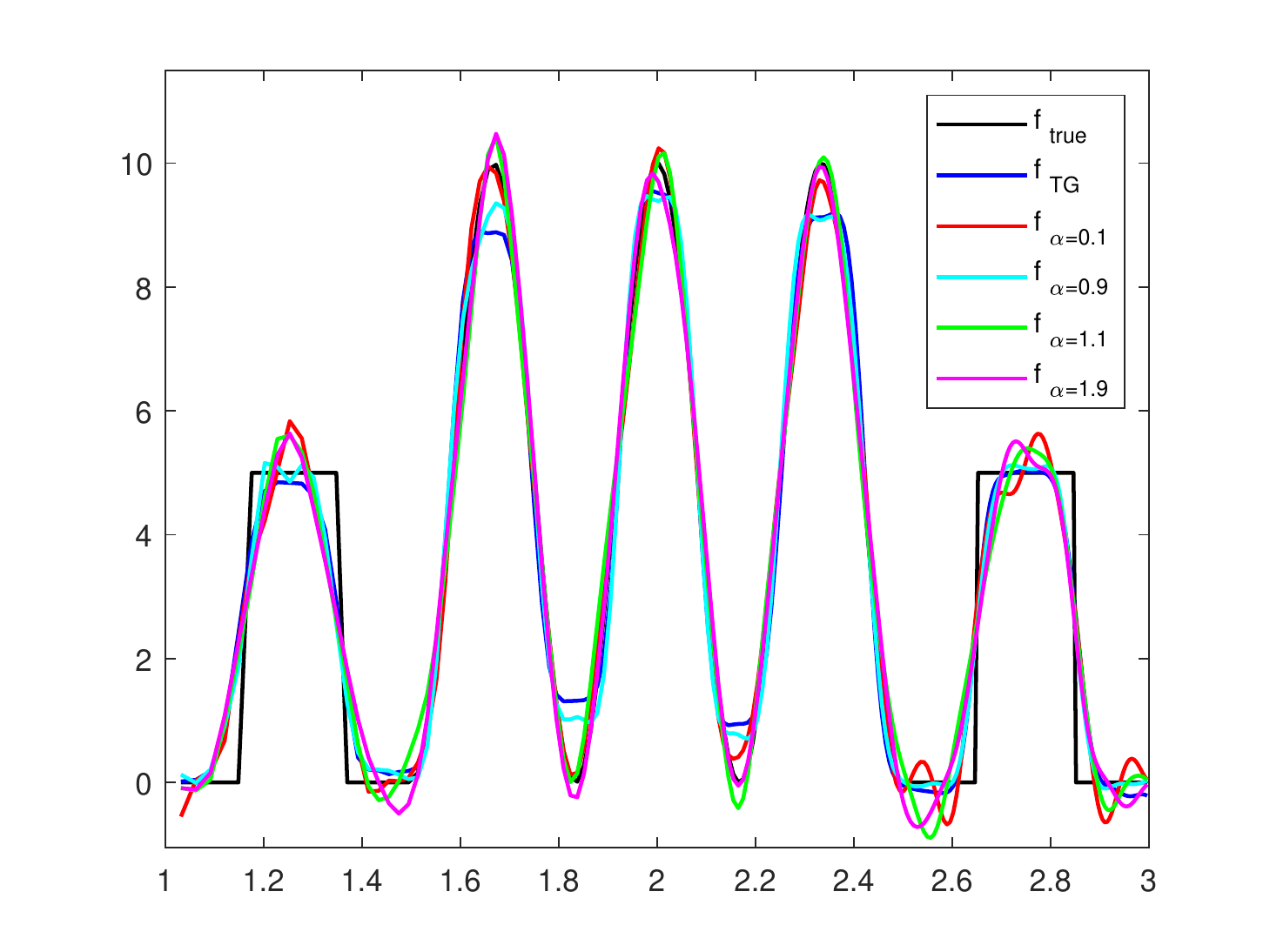}
}
\quad
\subfigure[]{
\label{fig3:subfig4}
\includegraphics[width=7.5cm]{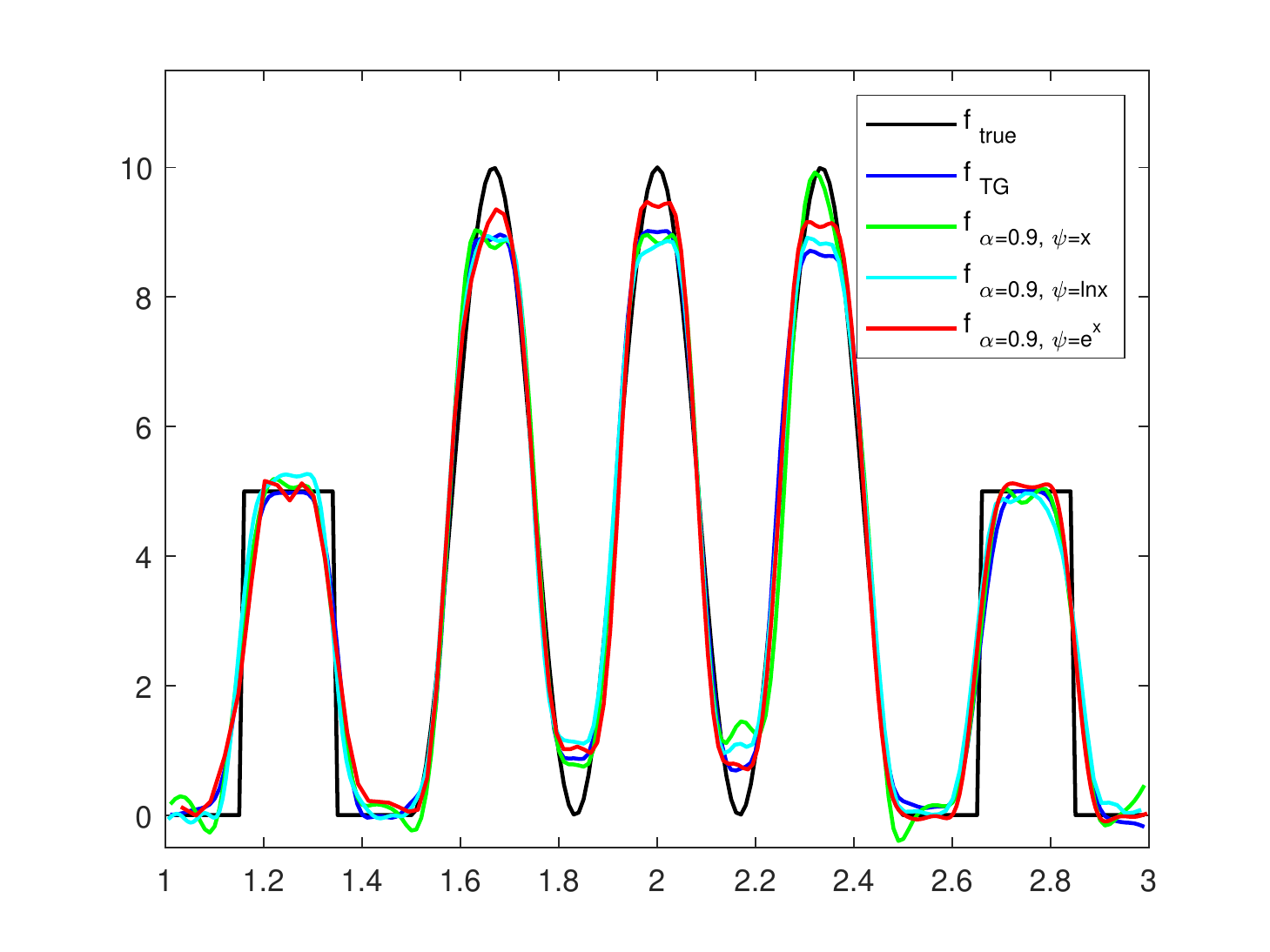}
}
\caption{The true solution and inversion solution with different $\psi$ by GFTG prior and TG prior. The legend $f_{TG}$ represents TG prior inversion results, legend $f_{true}$ represents the true solution, and the others represents GFTG prior inversion results with different fractional order $\alpha$. (a): $\psi=x$, (b): $\psi=\ln x$, and (c): $\psi=e^x$. (d): Different GFTG priors with $\alpha=0.9$ compared with TG prior.}
\label{figure3}
\end{figure}
The number of uniform grids discrete in space and time is $ M = 200 $ and $ N = 120 $, respectively. For simplicity, we take $\theta=\theta_{0}=\theta_1=\frac{1}{2}$ in this work.   In the pCN iterative algorithm, the reference Gaussian prior measure having the same covariance which given by equation \eqref{cov}, in order to ensure the reliability of the inference, we draw $10^6$ total samples from the posterior measure and the first $5\times 10^5$ samples are used in the burn-in period.

The inversion results plot in Figure \ref{figure3}.  The Figure \ref{fig3:subfig1} for $\psi=x$, we fixed the parameters  $\gamma=1$ and $d=0.04$ of the covariance Gaussian prior, the parameter $\beta=0.009$ for pCN algorithm. When $\alpha=0.1,\ 0.9,\ 1.1$ and $1.9$, we choose $\lambda=0.05,\ 0.3,\ 0.06$ and $0.003$ respectively, and $\lambda=0.16$ for the TG prior. In the case of $\psi=\ln x$, shown in the Figure \ref{fig3:subfig2}, where the parameters of the covariance  are set $\gamma=0.5$ and $d=0.03$,  and the stepsize $\beta=0.02$ for pCN algorithm. $\lambda=0.001,\ 0.06,\ 0.008$ and $0.0002$ respectively for $\alpha=0.1,\ 0.9,\ 1.1$ and $1.9$, and $\lambda=0.08$ for the TG prior. Figure \ref{fig3:subfig3} represents the reconstruction results for $\psi=e^x$, where $\gamma=1$, $d=0.04$ and $\beta=0.01$. Besides, the $\lambda$ of $\alpha=0.1,\ 0.9,\ 1.1$ and $1.9$, are $\lambda=0.9,\ 1,\ 0.8$ and $0.08$ respectively, and $\lambda=1.1$ for the TG prior.

From  the Figure \ref{figure3},  we can see that the advantages of different priors are more clear. For the TG prior, the results suffers from the staircase artifact in smooth due to the fact that the TV is local operator, but well approximate the flat. Nevertheless, the reconstruction with GFTG priors can overcome the weakness of TG prior because of that the FTV is a non-local operator, but have blurry effect on the edges since it is less sensitive to edge than TV. For $\alpha = 0.9$,  the results of GFTG priors with different $\psi$ are basic consistent with that of TG prior and the others are smoother than TG prior, which also similar to deconvolution problem \ref{sec5.1}.
%As the example 1 for deconvolution problem, the $95\%$ confidence interval (CI) for the unknown function when $\psi=\ln x$ and $\alpha=0.9$ in Figure \ref{figure4}, also demonstrates the Bayesian method can quantify uncertainty.
%Figure \ref{marginals2} shows the one- and two-dimensional posterior marginals of $\tilde{f}=[f_{20}; f_{50}; f_{100}; f_{150}; f_{180}]$ for $\psi=x, \alpha=0.9$. For each of the five components of $\tilde{q}$, representing the posterior results for $x=1.2, x=1.5,x=2, x=2.5, x=2.8$, respectively. From the analysis of the marginal posterior probabilities, we can observe that they are non-Gaussian. The marginals are quite complected and the high probability deviate from the tures, which is related to the complexity of the inversion target and the unsatisfactory inversion result.
In the figure \ref{CI-exp}, the blue region represents the corresponding $95\%$ confidence region for $\psi=e^x, \alpha=0.9$, which quantifies its associated uncertainty.
%\begin{figure}[htbp]
%  \centering
%  % Requires \usepackage{graphicx}
%  \includegraphics[width=13cm]{}
%  \begin{picture}(0,0)
%   \put(-368,234){$f_{20}$}
%   \put(-368,190){$f_{50}$}
%   \put(-368,140){$f_{100}$}
%   \put(-368,92){$f_{150}$}
%   \put(-368,45){$f_{180}$}
%   \put(-305,5){$f_{20}$}
%   \put(-248,5){$f_{50}$}
%   \put(-190,5){$f_{100}$}
%   \put(-128,5){$f_{150}$}
%   \put(-70,5){$f_{180}$}
%  \end{picture}
%  \caption{One- and two-dimensional posterior marginals of $[f_{20};f_{50};f_{100};f_{150};f_{180}]$ for $\psi=x, \alpha=0.9$.}\label{marginals2}
%\end{figure}

\begin{figure}[htbp]
  \center
  \includegraphics[width=8cm]{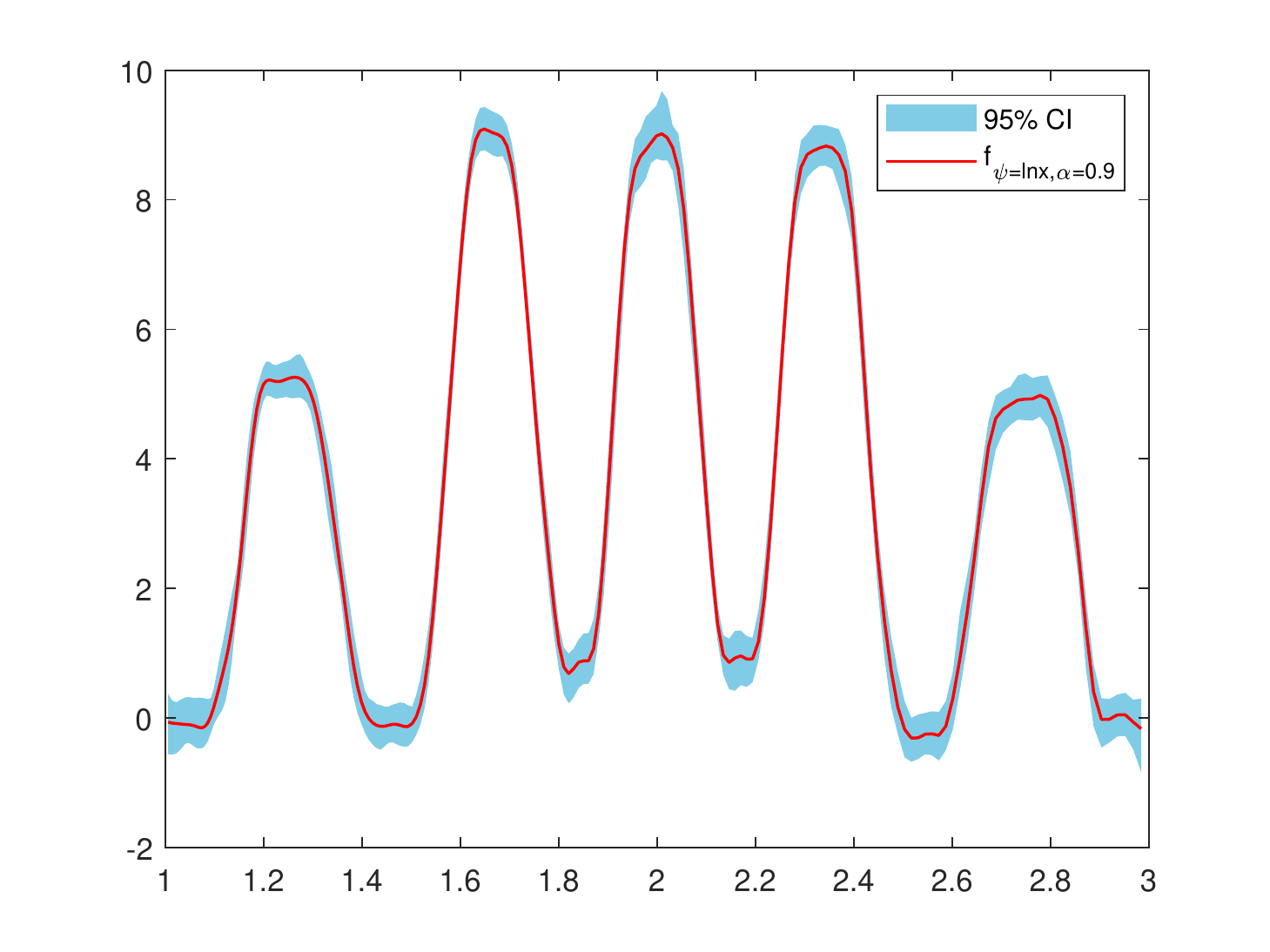}
  \caption{$95\%$ confidence interval (CI) for $\psi=\ln x, \alpha=0.9$.}
  \label{figure4}
  \end{figure}

This example is also a linear problem but with a complex reconstruction truth, so that the inverse results are slightly worse than the first example. Also, we can see that the results of all methods for different priors are agree better with the true solution, which shows that all the priors of Bayesian inference methods are behaved well. The next we will consider a nonlinear inverse problem, which should be more ill-posed, to further appraise the behaviour of the GFTG prior.

\subsection{The parameter identify by interior measurement problem}\label{sec5.3}
In this example, we consider the nonlinear problem of identifying the parameter $q$ in the Dirichlt boundary value problem as following
\begin{equation}\label{problem3}
\left\{
\begin{array}{ll}
-\Delta u+qu=f,  &{in\ \Omega,}\\
u=0,  &{on\ \partial \Omega.}\\
\end{array} \right.
\end{equation}
Given the source term $f$ in this problem, we consider that constructing the coefficient $q$ use the measurements of the interior Neumann value $g=\frac{\partial u}{\partial n} |_{\Omega \backslash \partial \Omega}$. Similar to \cite{Gu2021}, we can define a nonlinear forward operator $G$ with $G(q)=g$. In this numerical example, we take $\Omega=[1,3]$, and the source $f$ is given by
$$f(x)=q(x)(x-1)(x-3)-2,$$
and the true solution $q$ of the inverse problem is a piecewise smooth function, defined as following
\begin{equation*}
q(x)=\left\{
\begin{array}{ll}
0.8,  &{1.3\leqslant x<1.6,}\\
1.4,  &{1.6\leqslant x<1.8,}\\
13(x-1.8)(x-2.2)+1.4,  &{1.8\leqslant x<2.2,}\\
1.4,  &{2.2\leqslant x<2.4,}\\
0.8,  &{2.4\leqslant x<2.7,}\\
0,  &{otherwise.}
\end{array} \right.
\end{equation*}

When $\psi=x$, we divide $\Omega$ into $N$ small parts of equal size $\Delta x$ in space and use the finite difference method to approximately solve the differential equation (\ref{problem3}) with second order centered difference scheme. For $\psi=\ln x$ or $\psi=e^x$, we apply the same method in section \ref{sec5.2}, doing the transformation $x=\psi^{-1}(s)$ for problem (\ref{problem3}) like problem (\ref{problem2}) and then apply the equidistance discretization in variable $s$ and the second order centered difference scheme to the equation after relevant transformation of problem (\ref{problem3}).
The exact parameter is a piece wise function. The observed data $y$ is generated by the synthetic exact data $G(q)$ added the observed Gaussian noise $\eta$, i.e.,
$$y=G(q)+\eta.$$
In this numerical simulations, we take the noise as $\eta\thicksim \mathcal{N}(0,0.001^2)$ and $N=200$ in the inverse problem. For the GFTG prior, we take the fractional total variation as equations (\ref{priordiscret1}) and (\ref{priordiscret2}) in section \ref{sec5.1} as FTV prior term and assume the covariance is again given by eqaution (\ref{cov}) for the Gaussian reference measure. We choose to draw $10^5$ samples from the posterior with pCN algorithm and set the step size $\beta=0.01$ in Algorithm \ref{algorithm}.
\begin{figure}[htbp]
\centering
\subfigure[]{
\label{fig5:subfig1}
\includegraphics[width=7.5cm]{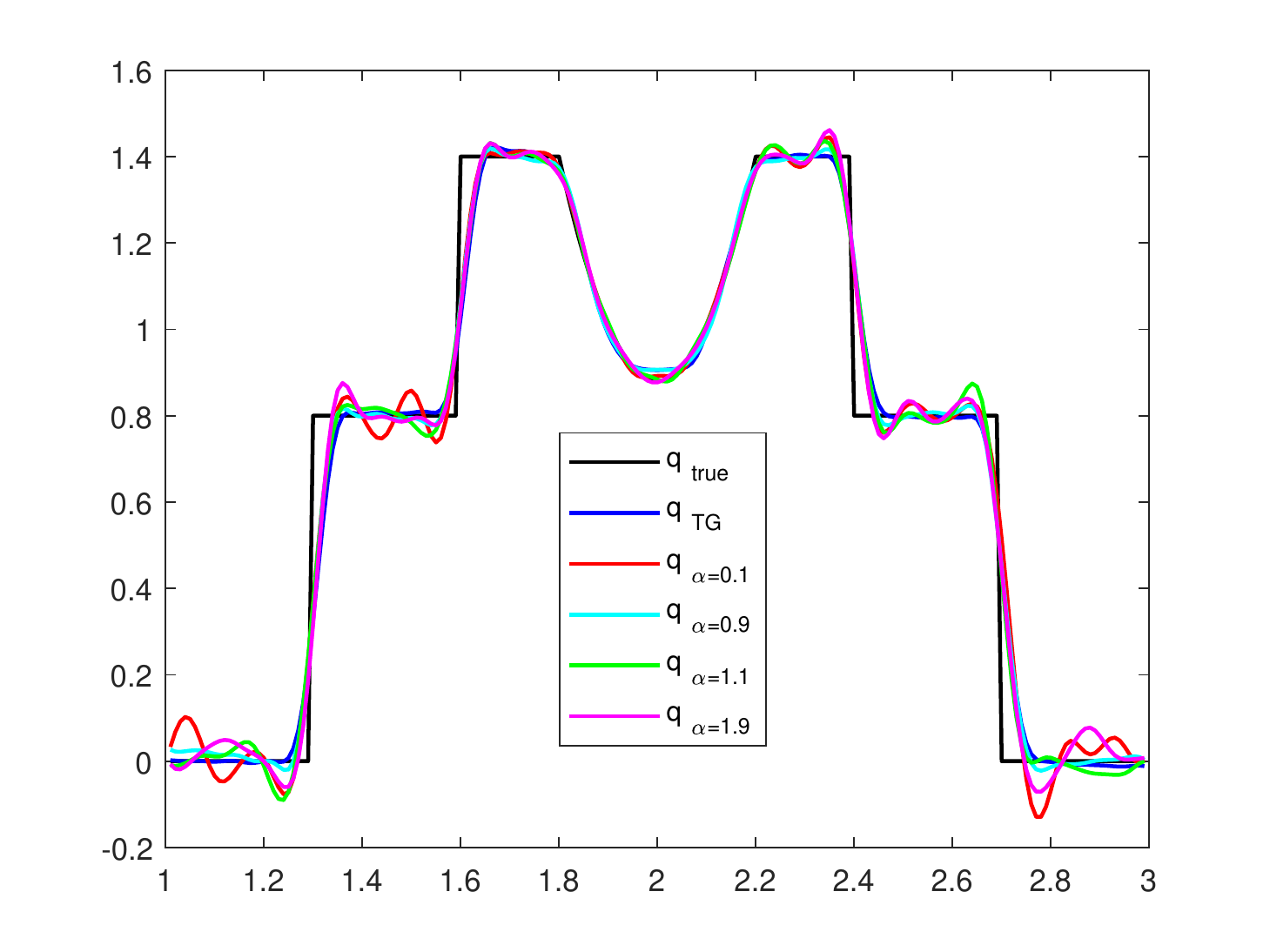}
}
\quad
\subfigure[]{
\label{fig5:subfig2}
\includegraphics[width=7.5cm]{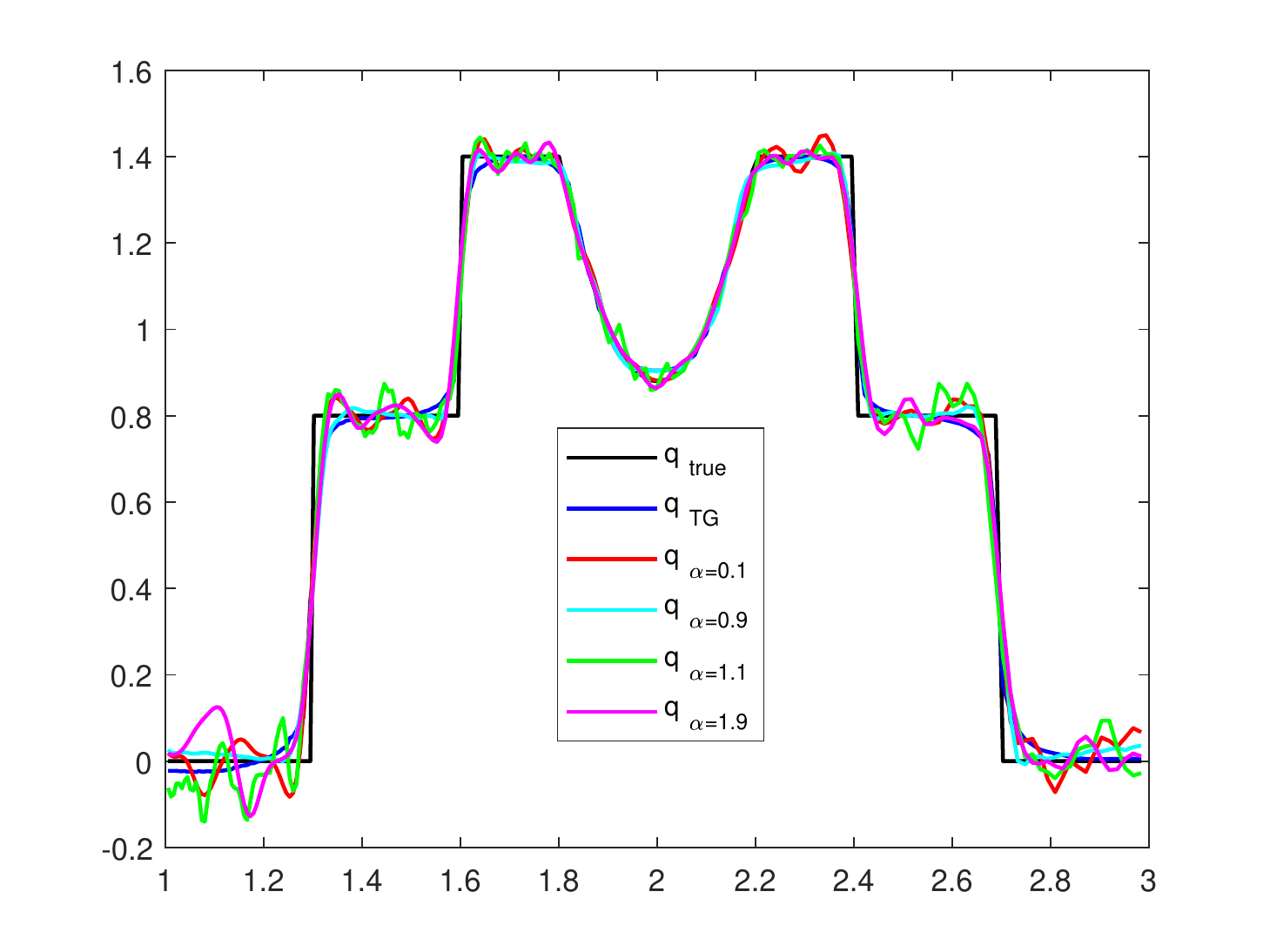}
}
\quad
\subfigure[]{
\label{fig5:subfig3}
\includegraphics[width=7.5cm]{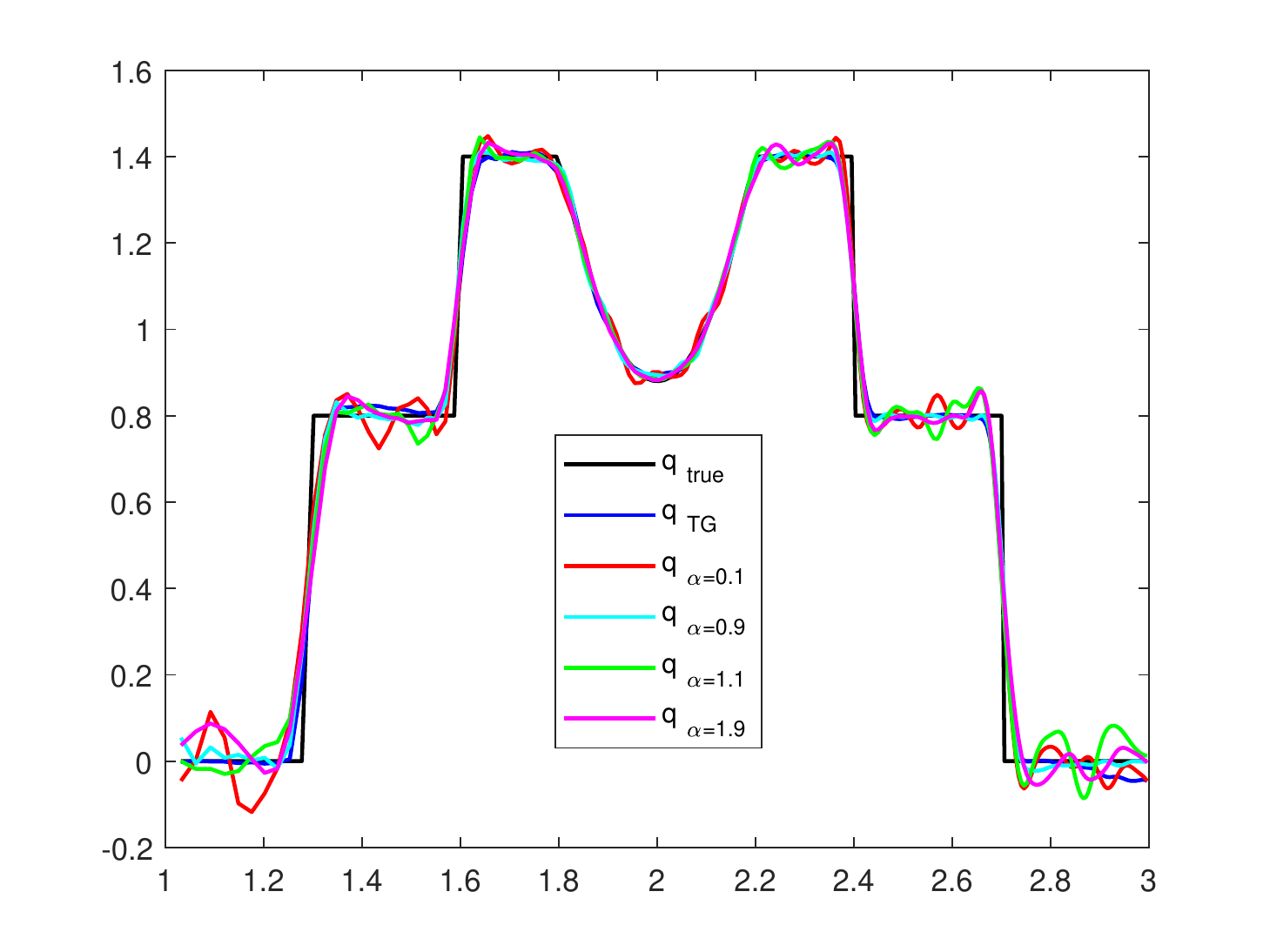}
}
\quad
\subfigure[]{
\label{fig5:subfig4}
\includegraphics[width=7.5cm]{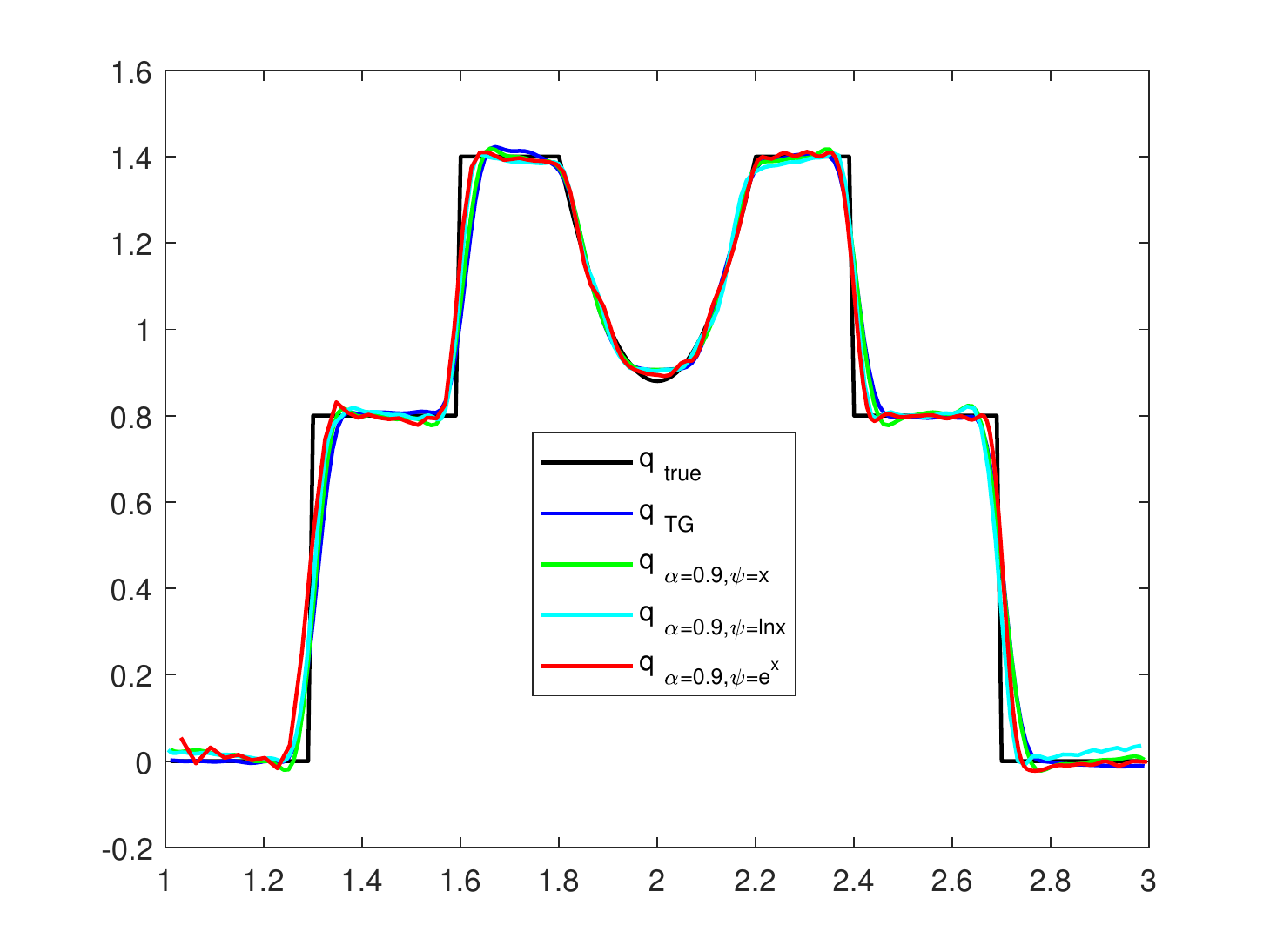}
}
\caption{The true solution and inversion solution with different $\psi$ by GFTG prior and TG prior. The legend $f_{TG}$ represents TG prior inversion results, legend $f_{true}$ represents the true solution, and the others represents GFTG prior inversion results with different fractional oreder $\alpha$. (a): $\psi=x$, (b): $\psi=\ln x$, and (c): $\psi=e^x$. (d): Different GFTG priors with $\alpha=0.9$ compared with TG prior.}
\label{figure5}
\end{figure}

Then, we show the numerical simulation results and compare the performance of the GFTG prior with the TG prior in the Figure \ref{figure5}. In the Figure \ref{fig5:subfig1}, we plot the results of Riemann-Liouville GFTG prior, i.e., choosing $\psi=x$ and the TG prior. For the GFTG prior of $\alpha=0.1,\  0.9,\ 1.1$ and $1.9$, we set $\lambda=0.1,\ 1,\ 0.06$ and $0.01$ separately, and for the TG prior, we set $\lambda=1$. Meanwhile, for the Gaussian reference measure, we take $d=0.03$, and $\gamma=0.05$.
\begin{figure}[htbp]
  \centering
  % Requires \usepackage{graphicx}
  \includegraphics[width=13cm]{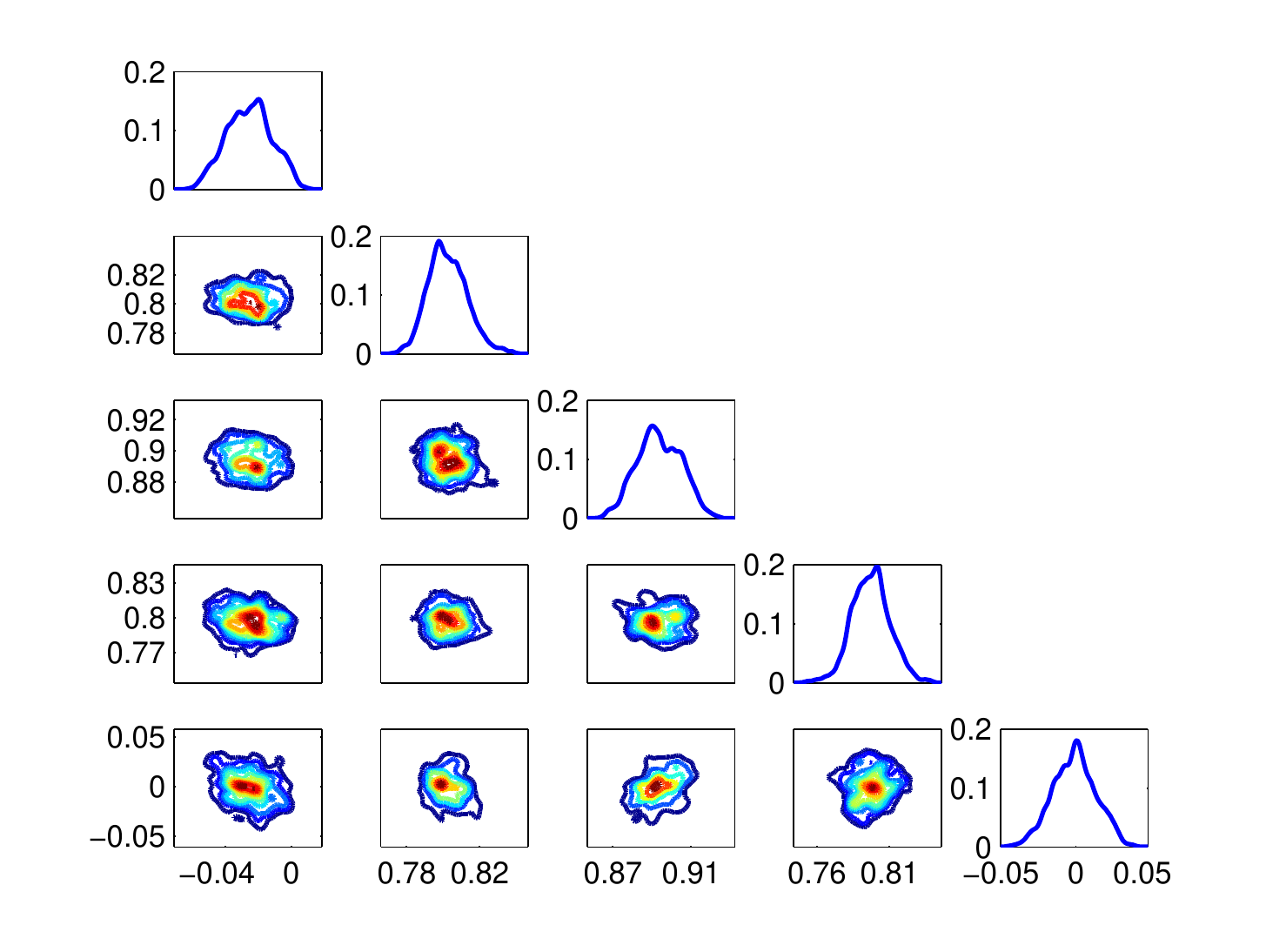}
  \begin{picture}(0,0)
   \put(-368,234){$q_{20}$}
   \put(-368,190){$q_{50}$}
   \put(-368,140){$q_{100}$}
   \put(-368,92){$q_{150}$}
   \put(-368,45){$q_{180}$}
   \put(-305,5){$q_{20}$}
   \put(-248,5){$q_{50}$}
   \put(-190,5){$q_{100}$}
   \put(-128,5){$q_{150}$}
   \put(-70,5){$q_{180}$}
  \end{picture}
  \caption{One- and two-dimensional posterior marginals of $[q_{20};q_{50};q_{100};q_{150};q_{180}]$ for $\psi=x, \alpha=0.9$.}\label{marginals3}
\end{figure}
\begin{figure}[htbp]
  \center
  \includegraphics[width=8cm]{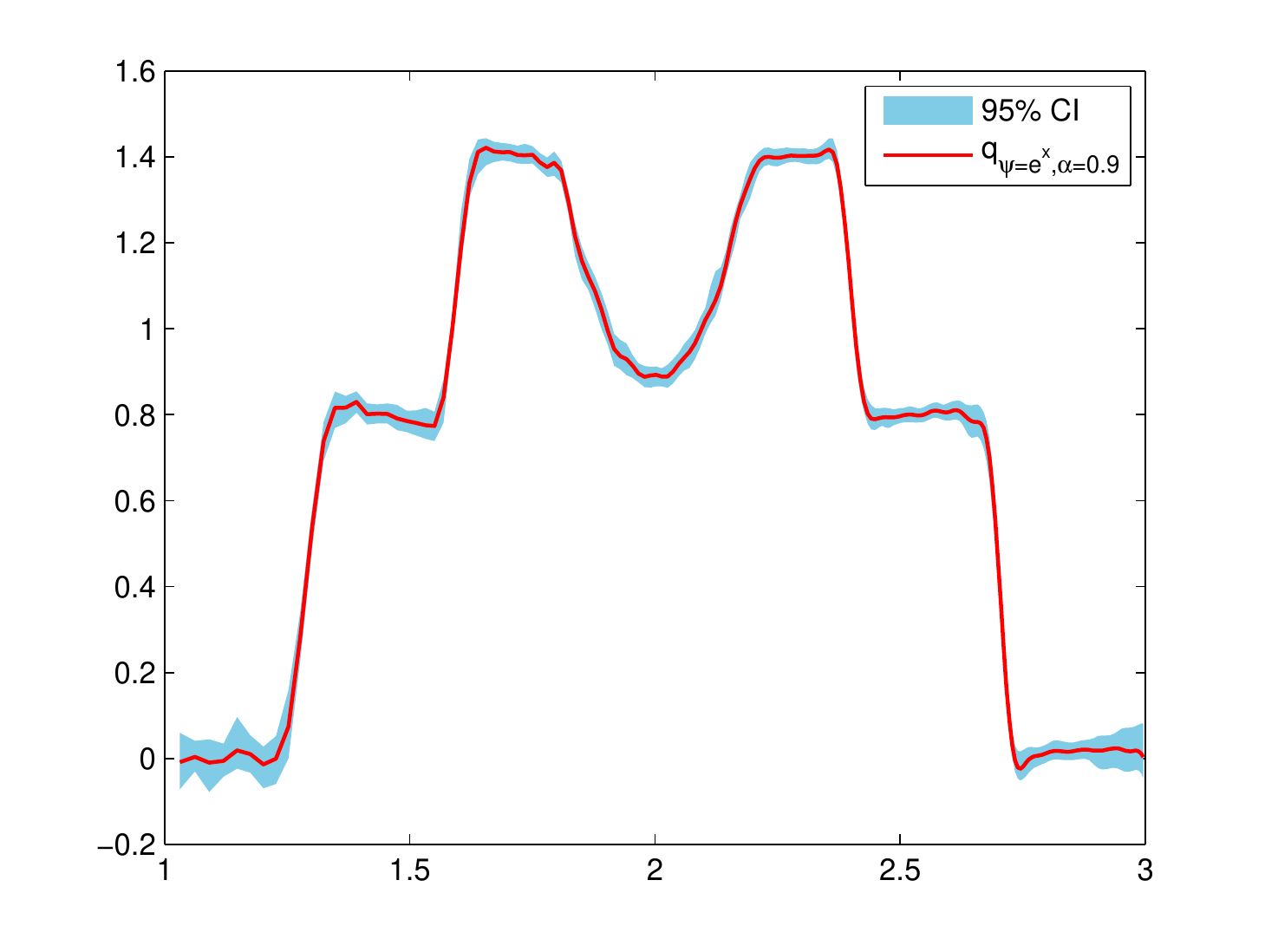}
  \caption{$95\%$ confidence interval (CI) for $\psi=e^x, \alpha=0.9$.}
  \label{CI-exp}
  \end{figure}
When $\psi=\ln x$ or $\psi=e^x$, we choose $d=0.03$ and $\gamma=0.01$ in the Gaussian covariance \eqref{cov} for all the GFTG prior and TG prior with different $\lambda$. In the figure \ref{fig5:subfig2}, we plot the results of $\psi=\ln x$ with $\lambda=0.08,\ 2,\ 0.5$, and $0.001$ separately for $\alpha=0.1,\  0.9,\ 1.1$ and $1.9$, meanwhile $\lambda=2$ for TG prior. We select $\lambda=1,\ 10,\ 2$ and $0. 5$ separately for $\alpha=0.1,\  0.9,\ 1.1$ and $1.9$, $\lambda=10$ for TG prior in the Figure \ref{fig5:subfig3}. Finally, in the Figure \ref{fig5:subfig4}, we plot the results of different GFTG prior when the fractional order $\alpha=0.9$ with the same parameter selection in Figure \ref{figure5} (a), (b), and (c), and the TG prior with the same parameter selection in Figure \ref{fig5:subfig1}.

The Figure \ref{figure5} shows that the GFTG prior is behaved well in smooth piece and a little worst in discontinuous pieces compared with TG prior. However, in Figure \ref{fig5:subfig4}, when $\alpha=0.9$, the results of GFTG prior and TG prior are roughly the same. This features are showing no difference with the previous instances. In addition, we can see that the results of all the different prior can approximate the true function, indicating the posterior distributions derived by all the prior are well behaved. Thus, this suggests that the GFTG prior is feasible and reasonable.

Figure \ref{marginals3} shows the one- and two-dimensional posterior marginals of $\tilde{q}=[q_{20};q_{50};q_{100};q_{150};q_{180}]$ for $\psi=x, \alpha=0.9$. For each of the five components of $\tilde{q}$, representing the posterior results for $x=1.2, x=1.5,x=2, x=2.5, x=2.8$, respectively. It is easy to see that the posterior of $q_{20}$ and $q_{180}$ centers around 0, $q_{50}$ and $q_{150}$ centers around 0.8, $q_{100}$ centers around 0.9. Due to the nonlinearity of parameter identification problem and GFTG prior, the target distributions present obvious non-Gaussian characteristics, and the modes appear more complex correlation based on the shape of their two-dimensional marginals. In the figure \ref{CI-exp}, the blue region represents the corresponding $95\%$ confidence region for $\psi=e^x, \alpha=0.9$, which quantifies its associated uncertainty.

\section{Inclusion}\label{sec6}
We study an infinite-dimensional Bayesian inference method based on a fractional total variance Gaussian (GFTG) prior to reconstruct images under different models. In the infinite-dimensional Bayesian framework, the separability of the space is essential for the basic results of integration theory to hold, thus we first definite the fractional Sobolev space and prove that $W^{\alpha,\psi}_2$ is a separable Hilbert space. After that we construct the GFTG prior on the $W^{\alpha,\psi}_2$ space, this hybrid prior can effectively avoid step effects, capture the detailed texture of the image, and also use the Gaussian distribution as a reference metric so that the resulting prior converges to a well-defined probability measure in the infinite dimensional limit. Moreover, based on the GFTG prior, we give the well-posedness and finite-dimensional approximation of the posterior measure of the Bayesian inverse problem so that robustness to changes in the observed data can be investigated. Finally, we implement the sampling of the posterior distribution in Bayesian inference under the GFTG prior by using the pCN algorithm, and compare the results with those under the TG prior. The numerical results show that the GFTG prior has good performance for sampling the posterior distribution under different models. We believe that the GFTG prior can be used to many other inverse problems, such as scattering inverse problem and so on, which is our research interest in future.

\section{Acknowledgements}\label{sec7}
The work described in this paper was supported by the NSF of China (11301168) and NSF of
Hunan (2020JJ4166).

\section{Appendix}

\textbf{Proof of Theorem \ref{approximationtheorem}:}
\begin{proof}
%Since $G$ satisfy Assumption~A.2 (i), $\Phi$...
Set $X=W^{\alpha,\psi}_2(\Omega)$, for every $r>0$ there is a $K_1=K_1(r)>0$ and a $K_2=K_2(r)>0$ such that, for all $u\in X$ with $\Vert u\Vert_X<r$,
$\Phi(u)\leq K_1$ and $R(u)\leq K_2$. Throughout the proof, the constant C changes from occurrence to occurrence.
Let $r=\|y\|_\Sigma$ and $K(r)=K_1(r)+K_2(r)$, and we can show that the normalization constant $Z$ for $\mu^y$ satisfies,
\[
Z\geqslant\int_{\lbrace\Vert u\Vert_X<r\rbrace}\exp(-K(r))\mu_0(\mathrm{d}u)=\exp(-K(r))\mu_0\lbrace\Vert u\Vert_X<r\rbrace=C.
\]
Similarly, it can be shown that the normalization constant $Z_{N1,N_2}$ for $\mu^y_{N_1,N_2}$ also satisfies $Z_{N_1,N_2}\geq C$. Furthermore,
%Let $N = \min\{N_1,N_2\}$, $\Psi(u) = \Phi(u) +R(u)$ and
for any $\varepsilon\in(0,1)$, it follows that
\begin{align*}
\vert Z-Z_{N_1,N_2}\vert &\leqslant \int_X\vert\exp(-\Phi(u)-R(u))-\exp(-\Phi_{N_1}(u)-R_{N_2}(u))\vert\mathrm{d}\mu_0(u)\\
&\leqslant\int_{X\backslash X_\varepsilon}\mu_0(\mathrm{d}u)+\int_{X_\varepsilon}\vert\Phi(u)-\Phi_{N_1}(u)\vert\mu_0(\mathrm{d}u)+\int_{X_\varepsilon}\vert R(u)-R_{N_2}(u)\vert\mu_0(\mathrm{d}u)\\
&\leqslant\varepsilon+a_{N_1}(\varepsilon)+b_{N_2}(\varepsilon),
\end{align*}
where we have used the inequality: $\vert\exp(-a)-\exp(-b)\vert\leqslant\min\lbrace1,\vert a-b\vert\rbrace$, (for any $a>0$ and $b>0$).

From the definition of Hellinger distance, it finds
\begin{equation*}
\begin{aligned}
2d_\mathrm{Hell}(\mu^y,\mu^y_{N_1,N_2})^2 &= \int_X \left(\sqrt{\frac{d\mu^y}{d\mu_0}}-\sqrt{\frac{d\mu^y_{N_1,N_2}}{d\mu_0}}\right)^2 \mu_0(du)
\\&=\int_X(Z^{-\frac12}\exp(-\frac12\Phi(u)-\frac12R(u))-Z_{N_1,N_2}^{-\frac12}
\exp(-\frac12\Phi_{N_1}(u)-\frac12R_{N_2}(u)))^2\mu_0(\mathrm{d}u)
\\ &\leqslant\int_{X\backslash X_\varepsilon}(\frac1{\sqrt{Z}}\exp(-\frac12\Phi(u)-\frac12R(u))
-\frac1{\sqrt{Z_{N_1,N_2}}}\exp(-\frac12\Phi_{N_1}(u)-\frac12R_{N_2}(u)))^2\mu_0(\mathrm{d}u)\\ &\ \ \ \ \ +\frac2Z\int_{X_\varepsilon}(\exp(-\frac12\Phi(u)-\frac12R(u))
-\exp(-\frac12\Phi_{N_1}(u)-\frac12R_{N_2}(u)))^2\mu_0(\mathrm{d}u)\\
&\ \ \ \ \ \ \ \ +2\vert Z^{-\frac12}-(Z_{N_1,N_2})^{-\frac12}\vert^2\int_{X_\varepsilon}
\exp(-\Phi_{N_1}(u)-R_{N_2}(u))\mu_0(\mathrm{d}u)\\
&\leqslant C\varepsilon+C(a_{N_1}(\varepsilon)+b_{N_2}(\varepsilon))^2
+C(\varepsilon+a_{N_1}(\varepsilon)+b_{N_2}(\varepsilon))^2,
\end{aligned}
\end{equation*}
%where
%
%\begin{equation*}
%\begin{aligned}
%&I_1=\int_{X\backslash X_\varepsilon}(\frac1{\sqrt{Z}}\exp(-\frac12\Phi(u)-\frac12R(u))-\frac1{\sqrt{Z_{N_1,N_2}}}\exp(-\frac12\Phi_{N_1}(u)-\frac12R_{N_2}(u)))^2\mu_0(\mathrm{d}u),\\
%&I_2=\frac2Z\int_{X_\varepsilon}(\exp(-\frac12\Phi(u)-\frac12R(u))-\exp(-\frac12\Phi_{N_1}(u)-\frac12R_{N_2}(u)))^2\mu_0(\mathrm{d}u),\\
%&I_3=2\vert Z^{-\frac12}-(Z_{N_1,N_2})^{-\frac12}\vert^2\int_{X_\varepsilon}\exp(-\Phi_{N_1}(u)-R_{N_2}(u))\mu_0(\mathrm{d}u).
%\end{aligned}
%\end{equation*}
%
%Actually, it is easy to show
%\begin{equation*}
%\begin{aligned}
%&I_1\leqslant\int_{X\backslash X_\varepsilon}(2C^{-\frac12})^2\mu_0(\mathrm{d}u)\leq C\varepsilon,\\
%&I_2\leqslant\frac2C\int_{X_\varepsilon}(a_{N_1}(\varepsilon)+b_{N_2}(\varepsilon))^2\mu_0(\mathrm{d}u)\leq C(a_{N_1}(\varepsilon)+b_{N_2}(\varepsilon))^2,\\
%&I_3\leqslant C(Z^{-3}\wedge(Z_{N_1,N_2})^{-3})\vert Z-Z_{N_1,N_2}\vert^2\int_{X_\varepsilon}\mu_0(\mathrm{d}u)=C(\varepsilon+a_{N_1}(\varepsilon)+b_{N_2}(\varepsilon))^2.
%\end{aligned}
%\end{equation*}
%
%It follows immediately that
%\[
%2d^2_\mathrm{Hell}(\mu^y,\mu^y_{N_1,N_2})\leq C(\varepsilon+\varepsilon^2+(a_{N_1}(\varepsilon)+b_{N_2}(\varepsilon))^2+\varepsilon(a_{N_1}(\varepsilon)+b_{N_2}(\varepsilon))),
%\]
where $C$ is a constant independent of $N_1,N_2$. Let $N_1$ and $N_2$ tend to $+\infty$, and notice the arbitrary of $\varepsilon>0$, we have
%\[
%\lim_{N_1,N_2\to+\infty}2d_\mathrm{Hell}(\mu,\mu_{N_1,N_2})^2\leq C(\varepsilon+\varepsilon^2),
%\]
%for any $\varepsilon>0$. Thus,
\[
\lim_{N_1,N_2\to+\infty}d_\mathrm{Hell}(\mu^y,\mu^y_{N_1,N_2})=0,
\]
which gets the desired results.
\end{proof}
\textbf{Proof of Corollary \ref{approximationcor}:}
\begin{proof}
For $X=W^{\alpha,\psi}_{2}(\Omega)$ and
\[
a_N=\mathbb{E}\| u-u_N\|_X^2= \sum_{k=N+1}^\infty \mathbb{E}|\langle u,e_k\rangle|^2,
\]
we define $\widetilde{X}=\lbrace u\in X\, |\, \Vert u\Vert_X\leq r_\varepsilon,\,\Vert u-u_N\Vert_X\leq\sqrt{\frac{2a_N}{\varepsilon}}\rbrace.$

Since $G$ satisfies the Assumptions \ref{forward operator assume}, the $\Phi$ satisfies Assumptions 2.6 in \cite{Stuart2010}, and $R$ defines by \eqref{R definition}, then there exsit constants $L^\Phi_\varepsilon, L^R_\varepsilon>0$, such that for any $u\in \widetilde{X}$,
\begin{gather*}
\vert\Phi(u)-\Phi(u_N)\vert\leq L^\Phi_\varepsilon\Vert u-u_N\Vert_X\leq L^\Phi_\varepsilon\sqrt{\frac{2a_N}{\varepsilon}},\\
\vert R(u)-R(u_N)\vert\leq L^R_\varepsilon\Vert u-u_N\Vert_X\leq L^R_\varepsilon\sqrt{\frac{2a_N}{\varepsilon}}.
\end{gather*}
Clearly, $L^\Phi_\varepsilon\sqrt{\frac{2a_N}{\varepsilon}}, L^R_\varepsilon\sqrt{\frac{2a_N}{\varepsilon}}\to0$ as $N\to\infty$.
As,
 \[\widetilde{X}\subset X_\varepsilon=\{u\in X\, |\,\vert\Phi(u)-\Phi(u_N)\vert\leq L^\Phi_\varepsilon\sqrt{\frac{2a_N}{\varepsilon}}, \,
\vert R(u)-R(u_N)\vert\leq L^R_\varepsilon\sqrt{\frac{2a_N}{\varepsilon}}\},\]
we have $\mu_0(X_\varepsilon)\geq 1-\varepsilon$.

In fact, notice $\mathcal{C}_0$ is of trace class, and $a_N\to 0$ as $N\to\infty.$
From Markov's inequality, it follows that for any $\epsilon>0$,
\begin{equation}
\mu_0(\lbrace\Vert u-u_N\Vert_X>\sqrt{\frac{2a_N}{\epsilon}}\rbrace)\leq \frac12\varepsilon,\quad \mathrm{for\ any}\, N\in \mathbb{N}. \label{e:eq1}
\end{equation}
For the given $\varepsilon$, there is a $r_\varepsilon$ such that $\mu_0(\{u\in X\, |\, \Vert u\Vert_X>r_\varepsilon\})<\frac12\varepsilon.$
It is easy to see that, for $\forall N\in\mathbb{N}$,
\[
\mu_0(\lbrace u\in X\, |\, \Vert u\Vert_X\leq r_\varepsilon,\,\Vert u-u_N\Vert_X\leq\sqrt{\frac{2a_N}{\varepsilon}}\rbrace)\geq 1-\varepsilon.
\]

Thus, by Theorem \ref{approximationtheorem}, we obtain
\[
d_\mathrm{Hell}(\mu^y,\mu^y_N)\to0,~~\mathrm{as}~~N\to\infty.
\]
\end{proof}


\begin{thebibliography}{100}

\bibitem{Agrawal2007}
{\sc O. P. Agrawal},
{\em Fractional variational calculus in terms of Riesz fractional derivatives},
J. Phys. A: Math. Theor., 2007, 40(24): 6287-6303.

\bibitem{Almeida2017}
{\sc R. Almeida},
{\em Caputo fractional derivative of a function with respect to another function},
Commun. Nonlinear Sci. Numer. Simul., 2017, 44: 460-481.

\bibitem{Bourdin2015}
{\sc L. Bourdin and D. Idczak},
{\em A fractional fundamental lemma and a fractional integration by parts formula-Applications to critical points of Bolza functionals and to linear boundary value problems},
Adv. Differ. Equat., 2015, 20(3/4): 213-232.

\bibitem{Brezis2011}
{\sc H. Br\'{e}zis},
{\em Functional Analysis, Sobolev Spaces and Partial Differential Equations},
Springer, New York, 2011.

\bibitem{bui2016}
{\sc T. Bui-Thanh and Q. P. Nguyen},
{\em  FEM-based discretization-invariant MCMC methods for
PDE-constrained Bayesian inverse problems},
Inverse Probl. Imag., 2016, 10(4): 943¨C975.

\bibitem{chan2019}
{\sc R. Chan and H. Liang},
{\em Truncated fractional-order total variation model for image restoration}, J. Oper. Res. Soc. China, 2019, 7: 561-578.

\bibitem{DL.Chen2013}
{\sc D. Chen, Y. Chen and D. Xue},
{\em Three Fractional-Order $TV-L2$ Models for Image Denoising},
J. Comput. Inf. Syst., 2013, 9 (12): 4773-4780.

\bibitem{Compton2012}
{\sc R. Compton, S. Osher and L. Bouchard},
{\em Hybrid regularization for MRI reconstruction with static field inhomogeneity correction},
IEEE Int. Symp. on Biomedical Imaging (ISBI), 2012, 7(4): 650-655.

\bibitem{Stuart2013}
{\sc S. Cotter, G. Roberts, A. Stuart and D. White},
{\em MCMC methods for functions: modifying old algorithms to make them faster},
Stat. Sci., 2013, 28 (3): 424-446.

\bibitem{Dashti2013}
{\sc M. Dashti, K. Law, A. Stuart and J. Voss},
{\em MAP estimators and their consistency in Bayesian nonparametric inverse problems},
Inverse Probl., 2013, 29.

\bibitem{Stuart2015}
{\sc M. Dashti and A. Stuart},
{\em The Bayesian Approach to Inverse Problems},
Handbook of Uncertainty Quantification, 2015: 1-108.


\bibitem{Efros1999}
{\sc A. Efros and T. Leung.},
{\em Texture synthesis by non-parametric sampling},
IEEE In Proc. Int. Conf. Computer Vision, 1999, 2: 1033-1038.

\bibitem{Evans1998}
{\sc L. Evans},
{\em Partial differential equations},
Graduate studies in mathematics, Providence, RI., 1998, 19 (2).

\bibitem{Family1991}
{\sc F. Family and T. Vicsek},
{\em Dynamics of Fractal Surfaces},
World Scientific, Singapore, 1991.

\bibitem{Gelman2013}
{\sc A. Gelman, J. Carlin, H. Stern, D. Dunson, A. Vehtari and D. Rubin},
{\em Bayesian data analysis},
CRC press, 2013.

\bibitem{gilboa2017}
{\sc G. Gilboa and S. Osher},
{\em Nonlocal operators with applications to image processing}, Multiscale
Model. Simul., 2008, 7: 1005-1028.

\bibitem{golbaghi2020}
{\sc F. Golbaghi, M. Rezghi, and M. Eslahchi},
{\em A hybrid image denoising method based on integer and fractional-order total variation}, Iran J. Sci. Technol. Trans. A: Sci. 2020, 44: 1803-1814.

\bibitem{W.Gou2017}
{\sc W. Guo, G. Song and Y. Zhang},
{\em PCM-TV-TFV: A Novel Two Stage Framework for Image Reconstruction from Fourier Data},
SIAM J. Imaging Sci., 2017, 10 (4): 2250-2274.

\bibitem{Gu2021}
{\sc R. Gu, B. Han, S. Tong and Y. Chen},
{\em An accelerated Kaczmarz type method for nonlinear inverse
problems in Banach spaces with uniformly convex penalty},
J. Comput. Appl. Math., 2021, 385: 113211.

\bibitem{Idczak2013}
{\sc D. Idczak and S. Walczak},
{\em Fractional Sobolev Spaces via Riemann-Liouville Derivatives},
J. Funct. Space Appl., 2013, 2013: 15 pages.

\bibitem{Jarad2020}
{\sc F. Jarad and T. Abdeljawad},
{\sc Generalized fractional derivatives and Laplace transform},
Discrete Cont. Dyn. - S, 2020, 13 (3): 709-722.

\bibitem{Kilbas2006}
{\sc A. Kilbas, H. Srivastava and J. Trujillo},
{\em Theory and Applications of Fractional Differential Equations},
North-Holland, New York, 2006, 204.

\bibitem{Laghrib2018}
{\sc A. Laghrib, A. Ben-Loghfyry, A. Hadri, and A. Hakim},
{\em A nonconvex fractional order variational model for multi-frame image super-resolution}, Signal Process, 2018, 67: 1-11.

\bibitem{lassas2004}
{\sc M. Lassas and S. Siltanen},
{\em Can one use total variation prior for edge-preserving Bayesian
inversion?}, Inverse Probl., 2004, 20: 1537-1563.

\bibitem{Lv2020}
{\sc D. Lv, Q. Zhou, J. K. Choi, J. Li and X. Zhang},
{\em Nonlocal TV-Gaussian prior for Bayesian inverse problems with applications to limited CT reconstruction},
Inverse Probl. Imag., 2020, 14 (1): 117-132.

\bibitem{Mathieu2003}
{\sc B. Mathieu, P. Melchior, A. Oustaloup and Ch. Ceyral},
{\em Fractional differentiation for edge detection},
Signal Proc., 2003, 83 (11): 2421-2432.

\bibitem{Mohamed2017}
{\sc J. Mohamed, K. Mokhtar, and S. Bessem},
{\em Hartman-wintner-type inequality for a fractional boundary value problem via a fractional derivative with respect to another function},
Discrete Dyn. Nat. Soc., 2017: 1-8.

\bibitem{Oldham1974}
{\sc K. Oldham and J. Spanier},
{\em The fractional calculus theory and applications of differentiation and integration to arbitrary order},
Elsevier, 1974.

\bibitem{Pdlubny1999}
{\sc I. Pdlubny},
{\em Fractional Differential Equations},
Academic Press, Inc., San Diego, CA, 1999.

\bibitem{Pu2010}
{\sc Y. Pu, J. Zhou and X. Yuan},
{\em Fractional differential mask: a fractional differential-based approach for multiscale texture enhancement},
IEEE T. Image Process, 2010, 19 (2): 2491-2511.

\bibitem{Samko1993}
{\sc S. Samko, A. Kilbas and O. Marichev},
{\em Fractional integrals and derivatives: Theory and Applications},
Gordon and Breach, 1993.

\bibitem{Sousa2018}
{\sc J. Sousa and E. Oliveira},
{\em On the $\psi$-Hilfer fractional derivative},
Commun. Nonl. Sci. Numer. Simult., 2018, 60: 72-91.

\bibitem{Stuart2010}
{\sc A. Stuart},
{\em Inverse problems: A Bayesian perspective},
Acta Numer., 2010, 19: 451-559.

\bibitem{unser2000}
{\sc M. Unser and T. Blu},
{\em Fractional splines and wavelets},
 SIAM Rev., 2000, 42 (1): 43-67.

\bibitem{Ruiz2009}
{\sc R. Verd\'{u}-Monedero, J. Larrey-Ruiz, J. Morales-S\'{a}nchez and J.L. Sancho-G\'{o}mez},
{\em Fractional regularization term for variational image registration},
Math. Probl. Eng., 2009, 2009: 1-13.

\bibitem{Vogel2002}
{\sc C. Vogel},
{\em Computational Methods for Inverse Problems},
SIAM, 2002.

\bibitem{wang2021}
{\sc L. Wang, M. Ding and G. Zheng},
{\em A Hadamard fractioal total variation-Gaussian (HFTG) prior for Bayesian inverse problems}, arXiv:2110.15656v1, 2021.

\bibitem{wang2019}
{\sc W. Wang, X. Xia, S. Zhang, C. He and L. Chen},
{\em Vector total fractional-order variation and its applications for color image denoising and decomposition}, Appl. Math. Model., 2019, 72: 155-175.

\bibitem{Williams2016}
{\sc B. Williams, J. Zhang and K. Chen},
{\em A new image deconvolution method with fractional regularisation},
J. Algorithm Comput. Tech., 2016, 10 (4): 265-276.

\bibitem{L.Yan2010}
{\sc L. Yan, C. Fu and F. Dou},
{\em A computational method for identifying a spacewise-dependent heat source},
Int. J. Numer. Meth. Bio., 2010, 26 (5): 597-608.

\bibitem{Q.Yang2010}
{\sc Q. Yang, F. Liu and I. Turner},
{\em Numerical methods for fractional partial differential equations
with Riesz space fractional derivatives},
Appl. Math. Model, 2010, 34: 200-218.

\bibitem{yao2020}
{\sc W. Yao, J. Shen, Z. Guo, J. Sun and B. Wu},
{\em A total fractional-order variation model for image super-resolution and its SAV algorithm}, J. Sci. Comput., 2020, 82 (3): 1-18.

\bibitem{Z.Yao2016}
{\sc Z. Yao, Z. Hu and J. Li},
{\em A TV-Gaussian Prior for Infinite-Dimensional Bayesian Inverse Problems and Its Numerical Implementations},
Inverse Probl., 2016, 32 (7): 075006.

\bibitem{Zhang2012}
{\sc Y. Zhang, Y. Pu, J. Hu and J. Zhou},
{\em A class of fractional-order variational image inpainting models},
Appl. Math. Inform. Sci., 2012, 6 (2): 299-306.

\bibitem{J.Zhang2012}
{\sc J. Zhang, Z. Wei and L. Xiao},
{\em Adaptive Fractional-order Multi-scale Method for Image Denoising},
J. Math. Imaging Vis., 2012, 43 (1): 39-49.

\bibitem{J.P.Zhang2015}
{\sc J. Zhang and K. Chen},
{\em Variational image registration by a total fractional-order variation model},
J. Comput. Phys., 2015, 293: 442-461.

\bibitem{K.Chen2015}
{\sc J. Zhang and K. Chen},
{\em A Total Fractional-Order Variation Model for Image Restoration with
Nonhomogeneous Boundary Conditions and Its Numerical Solution},
SIAM J. Imaging Sci., 2015, 8 (4): 2487-2518.

\bibitem{zhang2011}
{\sc J. Zhang and Z. Wei},
{\em A class of fractional-order multi-scale variational models and alternating
projection algorithm for image denoising}, Appl. Math. Model., 2011, 35: 2516-2528.

\bibitem{zhang2010}
{\sc X. Zhang, M. Burger, X. Bresson and S. Osher},
{\em Bregmanized nonlocal regularization for
deconvolution and sparse reconstruction}, SIAM J. Imaging Sci., 2010, 3: 253-276.

\bibitem{H.Zou2005}
{\sc H. Zou and T. Hastie},
{\em Regularization and variable selection via the elastic net},
 J. R. Stat. Soc. B., 2005, 67 (2): 301-320.

\end{thebibliography}
\end{document}